\PassOptionsToPackage{table}{xcolor}

\documentclass[smallextended]{svjour3}
\smartqed

\usepackage{url}

\usepackage{amsfonts}
\usepackage{amssymb} 
\usepackage{amsmath}
\usepackage{comment}
\usepackage[utf8]{inputenc}

\usepackage{xcolor}

\usepackage{theorem}

\newtheorem{rem}{Remark}
\newtheorem{prop}{Proposition}
\newtheorem{thm}{Theorem}
\newtheorem{lem}{Lemma}
\newtheorem{cor}{Corollary}

\usepackage{adjustbox}

\usepackage{ragged2e}
\usepackage{array}
\usepackage{adjustbox}
\usepackage[table]{xcolor}

\usepackage{fancyhdr}

\usepackage{array,multirow,graphicx}

\usepackage{supertabular}

\usepackage{adjustbox}

\usepackage{epstopdf}\usepackage{epsfig}\usepackage[utf8]{inputenc}\usepackage{longtable,booktabs}\usepackage{lscape}\usepackage{hhline}\usepackage{caption}\usepackage{subfig}\usepackage{blindtext}\usepackage{lmodern}\usepackage{textcomp}\usepackage{chngcntr}\usepackage{booktabs}

\fancypagestyle{plain}{\fancyhf{}\fancyfoot[C]{\bfseries \thepage}}

\newcolumntype{?}{!{\vrule width 1pt}}

\usepackage[table]{xcolor}

\arrayrulecolor[HTML]{000000}

\def\Box{{\hbox{\raisebox{0.0em}{\rlap{$\sqcap$}}\kern0em%
			\raisebox{-0.0emp}{$\sqcup$}}} } 

\def\bfi#1{\textbf{#1}} 
\def\att{					
	\marginpar[ \hspace*{\fill} \raisebox{-0.2em}{\rule{2mm}{1.2em}} ]
	{\raisebox{-0.2em}{\rule{2mm}{1.2em}} }
}
\def\at#1{[*** \att #1 ***]}  

\def\eeq{\end{equation}}
\def\lbeq#1{\begin{equation} \label{#1}}
\def\bary{\begin{array}}
	\def\eary{\end{array}}
\def\gzit#1{{\rm (\ref{#1})}} 			

\def\D{\displaystyle}				
\def\ol{\overline}

\def\wt{\widetilde}

\def\spc{~~~}

\def\scal{{\fns{sc}}}
\def\modify{{\fns{md}}}

\def\Areg{A$_{\fct{reg}}$}

\def\eps{\varepsilon}

\def\Rz{\mathbb{R}}

\def\cc {c_E}

\def\dist{\fct{dist}}

\def\fct#1{\mathop{\rm #1}}	                
\def\fns#1{{\mbox{\rm \scriptsize#1}}} 		

\def\init{\fns{init}}

\def\angle{\fct{a}}

\def\spc{\hspace*{0.5cm}} 			
\def\D{\displaystyle}				
\def\ol{\overline}

\def\ealg{\end{alg}}

\usepackage{algorithm}
\usepackage{algpseudocode}

\makeatletter
\newcommand{\algrule}[1][.2pt]{\par\vskip.5\baselineskip\hrule height #1\par\vskip.5\baselineskip}
\makeatother

\def\bfi#1{{\bf{#1}}}

\def\Asol{A$_{\fct{sol}}$}
\def\Alip{A$_{\fct{lip}}$}
\def\Aeb{A$_{\fct{eb}}$}

\usepackage{tikz}
\usepackage{pgfplots}

\usepackage[colorlinks=true, allcolors=blue]{hyperref}
\usepackage{orcidlink}

\begin{document}
	\title{Subspace methods for min-max problems}

\author{Morteza Kimiaei$^\dagger$\thanks{$\dagger$ MK acknowledges financial support from the Austrian Science Foundation under https://doi.org/10.55776/PAT2747625} \and {Shima Shabani$^\ddagger$ \and Michael Breu\ss$^\ddagger$}\thanks{$\ddagger$ SS $\&$ MB acknowledge funding from the Federal Ministry of Research, Technology and Space (BMFTR), Germany, through the project “Digital GreenTech (DGT) – Environmental Technology Meets Robotics” (grant number 68542).} }

\institute{
M. Kimiaei (corresponding author)\at
Fakult\"at f\"ur Mathematik, Universit\"at Wien, Oskar-Morgenstern-Platz 1,  A-1090, Wien, Austria\\
\email{morteza.kimiaei@univie.ac.at} 
\and 
S. Shabani and M. Breu\ss \at 
Institute for Mathematics, Brandenburg University of Technology, Platz der Deutschen Einheit 1, 03046 Cottbus, Germany \\
\email{shima.shabani, breuss@b-tu.de}
}

\maketitle

\begin{abstract}
\begin{sloppypar}
This paper introduces four groups of subspace methods for nonlinear monotone equations, with applications to large-scale machine learning problems. The methods use Jacobian-free subspace ({\tt JFS}) directions of conjugate-gradient type, combined with either fixed step sizes or variable step sizes generated by the projected method of Solodov and Svaiter. To ensure convergence independently of the specific algebraic form of the subspace directions, we impose an angle condition together with an explicit scaling rule controlling the effective search directions. Under monotonicity and Lipschitz continuity of the operator, we establish global convergence for both the line-search and fixed-step frameworks, as well as a best-iterate residual rate $O(\ell^{-1/2})$. Under a local error bound, the distance to the solution set satisfies the sharper decay $o(\ell^{-1/2})$. If the operator is continuously differentiable and its Jacobian is nonsingular at a solution, the required local error bound and local isolation follow, yielding $R$-linear local convergence. The residual sequence then converges geometrically and hence satisfies the last-iterate rate $o(\ell^{-1})$, without strong monotonicity. We also derive iteration and residual-evaluation complexity bounds: $O(\varepsilon^{-2})$ for the baseline best-iterate guarantee and $O(\log(\varepsilon^{-1}))$ in the local linear regime, together with a uniform bound on backtracking residual evaluations. Under additional asymptotic assumptions, the proposed {\tt JFS} directions and several classical update directions admit related optimistic gradient descent--ascent (\texttt{OGDA})-type residual--memory representations. Numerical experiments illustrate the robustness and efficiency of the methods on representative min--max problems.

\end{sloppypar}
\end{abstract}

\keywords{Monotone equations \and projected line search \and  subspace method \and  global convergence \and  $R$-linear convergence \and  complexity  \and  machine learning}

\vspace{0.2cm} {\em 2000 AMS Subject Classification: 47J20, 90C56}.

\begin{sloppypar}
\section{Introduction}\label{sec:into}
The theory of monotone operators was originally developed in the 1960s in the context of functional analysis and partial differential equations. 
It was soon recognized that this framework is also fundamental for convex optimization. 
In the 1970s, iterative algorithms based on monotone operator theory---such as proximal-point and fixed-point methods---were introduced for solving convex minimization problems; see \cite{ryu2016primer} and the references therein. 
Since then, the field has grown substantially and remains highly active, with applications in compressed sensing \cite{HP,XZ}, power engineering systems \cite{WW}, and optimization models arising in machine learning \cite{Jin2021}. Considering modern machine learning, one of the primary catalysts for studying
nonlinear monotone equations (NME) is the structural transition from simple minimization to structured min--max problems that
can be formulated as
\begin{equation*}
        \min_{u \in \mathbb{R}^{n_u}} \max_{v \in \mathbb{R}^{n_v}} \Phi(u, v),
\end{equation*}
where $\Phi : \mathbb{R}^{n_u} \times \mathbb{R}^{n_v} \to \mathbb{R}$ represents the objective function. Under standard convex--concave and differentiability assumptions on $\Phi$, the first-order optimality conditions lead to a monotone operator formulation \cite{Bo2023}. 
Specifically, defining $x=(u,v)\in\mathbb{R}^n$, where
$n=n_u+n_v$, and
\begin{equation*}
    E(x) = 
    \begin{bmatrix} 
        \nabla_u \Phi(u, v) \\ 
        -\nabla_v \Phi(u, v) 
    \end{bmatrix},
\end{equation*}
the first-order optimality conditions for the unconstrained min--max
problem reduce to the nonlinear monotone equation
\begin{equation}\label{e.ME}
E(x) = 0, \qquad x \in \mathbb{R}^n.
\end{equation}
The present paper focuses on the equation formulation \eqref{e.ME}. Under the convex--concave structure, the operator $E$ is monotone on $\Rz^n$, i.e.,
\begin{equation}\label{e.monotone}
\big(E(x_1) - E(x_2)\big)^\top(x_1 - x_2) \geq 0, \qquad \forall x_1, x_2 \in \Rz^n.
\end{equation}
The following three application classes are considered again in the numerical experiments; see Section~\ref{s.num}:
\begin{itemize}
   \item {\bfi{Multi-Agent Reinforcement Learning (MARL)} \cite{jiao2024minimax,omid}{\bfi:}} 
In competitive environments, agents seek a Nash equilibrium. 
For a two-player zero-sum Markov game with discount factor $\gamma \in (0,1)$ and reward sequence $\{r_t\}_{t \ge 0}$, using unconstrained parameterizations $\pi_1(\theta_1)$ and $\pi_2(\theta_2)$ of the players' policies, the value function can be framed as
\begin{equation*}
    \min_{\theta_1 \in \mathbb{R}^{n_1}} \max_{\theta_2 \in \mathbb{R}^{n_2}} 
    \Phi(\theta_1, \theta_2) 
    = 
    \mathbb{E}_{\pi_1(\theta_1), \pi_2(\theta_2)} 
    \left[ 
        \sum_{t=0}^{\infty} \gamma^t r_t 
    \right],
\end{equation*}
where $\mathbb{E}_{\pi_1(\theta_1),\pi_2(\theta_2)}$ denotes expectation with respect to the probability measure induced by the initial state distribution, the transition kernel of the Markov game, and the joint policy $(\pi_1(\theta_1),\pi_2(\theta_2))$. The transition kernel specifies the probability distribution
of the next state given the current state and the actions selected by the two
players. The corresponding first-order stationarity system can be written as an NME in the joint parameter vector $x=(\theta_1,\theta_2)\in\mathbb{R}^{n_1+n_2}$.

\item {\bfi{{Robust Adversarial Learning (RAL)}} \cite{madr,robey2021adversarial}{\bfi:}} 
To ensure robustness against adversarial perturbations, models are trained by solving
\begin{equation*}
    \min_{\theta \in \mathbb{R}^{n_\theta}} 
    \sum_{i=1}^n 
    \max_{\delta_i \in \mathbb{R}^{n_\delta}} 
    \Phi(\theta, \delta_i),
\end{equation*}
where $\theta \in \mathbb{R}^{n_\theta}$ denotes the model parameter vector and
$\delta_i \in \mathbb{R}^{n_\delta}$ represents an adversarial perturbation applied to the $i$th sample. 
The corresponding first-order stationarity conditions define a high-dimensional NME whose operator is typically monotone but not strongly monotone.
\item {\bfi{ {Generative Adversarial Networks (GANs)}} \cite{bohm,goodfellow2020generative}{\bfi:}} 
Let $G_{\theta_G}$ and $D_{\theta_D}$ denote the generator and discriminator
parameterized by $\theta_G \in \mathbb{R}^{n_G}$ and $\theta_D \in \mathbb{R}^{n_D}$, respectively. Let $p_{\text{data}}$ denote a probability distribution on the data space,
$\xi$ a data sample drawn from $p_{\text{data}}$, and $p_z$ a probability
distribution on the latent space. The GAN objective can be written as the min--max problem
\begin{equation*}
    \min_{\theta_G \in \mathbb{R}^{n_G}} 
    \max_{\theta_D \in \mathbb{R}^{n_D}}
    \Phi(\theta_G,\theta_D),
\end{equation*}
where
\[
\Phi(\theta_G,\theta_D)
=
\mathbb{E}_{\xi \sim p_{\text{data}}}[\log D_{\theta_D}(\xi)]
+
\mathbb{E}_{z \sim p_z}[\log(1 - D_{\theta_D}(G_{\theta_G}(z)))].
\]
Here, $\mathbb{E}_{\xi \sim p_{\text{data}}}[\cdot]$ denotes expectation with respect to 
the probability measure $p_{\text{data}}$, and 
$\mathbb{E}_{z \sim p_z}[\cdot]$ denotes expectation with respect to the probability 
measure $p_z$. The associated first-order stationarity system
can be written as an NME in the joint parameter vector
$x=(\theta_G,\theta_D)\in\mathbb{R}^{n_G+n_D}$. For convex--concave GAN-type models, the induced operator is monotone but need
not be strongly monotone. More general nonconvex--nonconcave GAN models fall
outside the monotone setting considered in this paper and may exhibit
oscillatory behavior during training.
\end{itemize}

The high-dimensional nature of these applications makes second-order methods impractical. The derivative-free subspace structure of the projected line search subspace,  {\tt PLS-S}, framework is therefore well suited for such problems, as it directly targets the NME formulation using only residual evaluations, without requiring explicit Jacobian information. Consequently, projection-based methods, extragradient schemes, and optimistic gradient methods have attracted considerable attention in recent years for solving large-scale NME arising from min--max problems. 
Motivated by these developments, we consider a variety of min--max problems formulated as NME to assess the efficiency and robustness of the proposed {\tt PLS-S} framework compared to state-of-the-art methods. 

The derivative-free subspace structure and projection-based geometry of {\tt PLS-S} make it particularly suitable for machine learning settings in which Jacobian information is unavailable or prohibitively expensive to compute.
 Although NME are not derivative-free optimization (DFO) problems in general and the monotone operator defining the system is often available in analytical form, the explicit computation or storage of Jacobian information becomes prohibitively expensive or even infeasible in large-scale applications. This is particularly the case in modern machine learning, equilibrium modeling, and large discretized systems, where the monotone operator may involve complex composite mappings
or high-dimensional data dependencies. As a result, iterative methods that avoid explicit derivative information and rely solely on residual evaluations are of significant practical interest. The most popular DFO methods are direct-search methods \cite{audet2006mads,conn2009dfo}, line-search methods \cite{VRBBO,VRDFON,SSDFO,SDBOX}, model-based methods \cite{BOBYQA,UOBYQA}, and evolutionary strategies such as matrix adaptation \cite{MADFO,MATRS}. For nonlinear systems in particular, the existing Jacobian-free optimization strategies---a class of DFO methods---include model-based approaches \cite{kimiaei2017new,LMLS}, Jacobian-free Newton-like \cite{deuflhard2011newton,griewank1981modification}, and quasi-Newton schemes \cite{sorber2012unconstrained}.

 In the context of solving NME, the existing iterative strategies can be broadly grouped into two groups: methods employing \textit{fixed step sizes} and methods employing \textit{line-search step sizes}. In the following, we briefly review some methods related to this work.   

\paragraph{\bfi{(i) Methods with fixed step size.}} The simplest iterative strategy for obtaining the solution of NME mimics the classical gradient descent algorithm, regardless of the initialization, and uses a given fixed step size. For convergence with explicit numerical methods for NME, an operator-corrector term is required. Let us briefly discuss some well-known discrete methods with fixed step size to solve NME governed by Lipschitz continuity. The global convergence or last/best-iterate convergence rate of the methods was established under certain boundedness conditions for a fixed step size related to the Lipschitz constant. 

The extragradient ({\tt EG}) method, proposed by Korpelevich  \cite{korpelevich1976extragradient} and Antipin \cite{antipin1997equilibrium}, is a certain modification of the gradient method with a global convergence property that explores the idea of extrapolation. The last-iterate convergence rate of the Euclidean residual norm for this method was established in \cite{gorbunov2022stochastic}. The optimistic gradient descent ascent ({\tt OGDA}) method proposed by Popov \cite{popov1980modification} is formulated in terms of the monotone operator and the idea of momentum. For any two initial points, global convergence for {\tt OGDA} was established. Moreover, the best iterate convergence rate of the residual norm was proved by Chavdarova et al. \cite{chavdarova2021last}. 
Bot et al. \cite{Bo2023} proposed the fast optimistic gradient descent ascent method, which is an accelerated version of the {\tt OGDA}, with the last-iterate convergence rate to a solution. The extra anchored gradient ({\tt EAG}) \cite{yoon2021accelerated} method is an accelerated algorithm to solve NME, designed by using anchor variables, a technique that can be traced back to Halpern’s algorithm \cite{halpern1967fixed}. A dynamic alternative version for {\tt EAG} addresses some issues of the fixed version. However, its last-iterate convergence rate was proven in both cases for any initialization.

\paragraph{\bfi{(ii) Methods with line search step size.}} Projected line search method ({\tt PLS}) uses a line search technique \cite{nocedal2006numerical} combined with the projection strategy proposed by Solodov and Svaiter \cite{SS}. Line search methods are a prominent class of iterative methods to solve unconstrained minimization problems. 
These methods produce new iterates utilizing a suitable step size after determining proper directions for minimization. Effective choices of both direction and step size are key to success and robustness
of the line search approach.  An attractive property of {\tt PLS} is that, under the assumption of continuity and monotonicity of the nonlinear system of equations, the sequence of points generated by {\tt PLS}, starting from an arbitrary initial point, converges to a solution of that system, without requiring the boundedness of the solution set for global convergence. In addition, the distance between the sequence and its solution set decreases, ensuring convergence for {\tt PLS}. For the first time, \cite{SS} introduced a {\tt PLS} using Newton's method for determining the descent direction to solve NME. Recently, for six line search schemes in the existing {\tt PLS} methods, \cite{Ou2022} proposed a framework that provides a unified convergence analysis of the derivative-free projection-based method for NME. The sublinear convergence rate of the sequence to a solution is proven under some conditions.  

\paragraph{\bfi{Subspace directions.}}\label{s.dfd}
There are many different subspace directions along which {\tt PLS} tries to handle NME in low to high dimensions; see, e.g., \cite{aho,dai,HP,XZ}, while the Jacobian-based methods (e.g., the tensor
method \cite{BS}, the Newton method \cite{SS}, the quasi-Newton
method \cite{BKL},  the limited-memory quasi-Newton method \cite{LMLS}, the Gauss-Newton method \cite{LF}, the
Levenberg-Marquardt method \cite{KYF} and their alternative forms)
handle the problem in low to medium dimensions. These subspace directions have the same structure as the conjugate gradient (\texttt{CG}) direction (e.g., \cite{al2,al1},
\cite{Andrei22013,Andrei12013}, \cite{bab1,bab2}, \cite{HZ0}), but instead of the exact gradient, the residual vector is used. We call these directions {\it Jacobian-free subspace directions} (\texttt{JFSdir}), since they are spanned by the residual vector and the scaled previous direction and do not involve derivative information of the monotone equation.

\section{Our contribution}\label{sec:cont}

In this paper, we propose a projection-based subspace framework for solving
nonlinear monotone equations. The framework has two step-size
realizations: the line-search realization, denoted by \texttt{PLS-S}, and its
fixed-step counterpart, denoted by \texttt{PF-S}. Throughout the paper, when
a result applies to both realizations, we refer collectively to them as the
proposed projection-based subspace framework or simply the
proposed methods; the notation \texttt{PLS-S} or \texttt{PF-S} is
used when the distinction between the two step-size strategies is relevant.
Their main ingredients are: (i) an exact projection mechanism, (ii) four
groups of Jacobian-free subspace directions, (iii) an enforced residual-based
angle condition, (iv) a line-search step-size strategy, (v) a fixed step-size
strategy, (vi) asymptotic and canonical \texttt{OGDA}-type direction
structures, (vii) residual--memory representability and local residual decay,
(viii) convergence and complexity analysis, and (ix) numerical results. The
proposed methods use conjugate-gradient-type recursions to generate candidate
subspace directions without evaluating Jacobian matrices, while the effective
directions are controlled through an explicit scaling rule and an angle
safeguard.

This paper is accompanied by supplementary material \cite{suppMat}, which
provides the detailed development underlying contributions~(vi) and~(vii).
In particular, it examines the asymptotic and canonical
\texttt{OGDA}-type direction structures associated with the proposed
subspace directions, establishes their residual--memory representations, and
relates these structures to representative classical \texttt{OGDA}-type
methods. It also discusses the corresponding convergence-rate implications
under additional Fej\'er-type decrease and local error-bound conditions. For
completeness, \cite{suppMat} further provides additional background on
monotone equations and min--max problems, the complete construction and
validation of the learning-based test problems, and detailed numerical
comparisons among the \texttt{PLS-S} and its version \texttt{PF-S} with a fixed step size, and
\texttt{OGDA}-type variants.

The main features of {\tt PLS-S} and \texttt{PF-S} are as follows:\medskip\\
(i) \bfi{Projection method.}
The proposed framework is built upon a projection mechanism generated by the monotonicity of the operator. At each iteration, a trial point
defines a separating hyperplane and an associated closed half-space containing
the solution set. The next iterate is obtained by projecting the current
iterate onto this half-space. This
mechanism guarantees Fej\'er monotonicity of the iterates with respect to the
solution set and plays a central role in establishing global convergence
without requiring a merit function or Jacobian information.\medskip\\
(ii) \bfi{Subspace directions.}
We introduce four groups of Jacobian-free subspace directions, denoted
collectively by {\tt JFSdir}. The associated search subspaces are generated by
two-term and three-term combinations of the current residual, previous
directions, residual differences, and displacement vectors. These
constructions incorporate structured memory into the search process while
preserving the Jacobian-free nature of the method. Importantly, the proposed
subspace directions admit a unified residual--memory interpretation of
\texttt{OGDA} type, in which the negative current residual is corrected
by a controlled term involving information from previous iterations.

This memory correction can mitigate the classical
\bfi{zigzagging behavior} often observed in first-order methods, particularly
on ill-conditioned monotone problems. In our proposed framework,
the residual--memory structure is combined with the projection step, which
controls the distance to the solution set. The convergence analysis is
independent of the particular algebraic formulas used to determine the
subspace coefficients, provided that the resulting effective directions
satisfy the prescribed scaling and angle conditions. This permits a unified
analysis of a broad class of Jacobian-free subspace directions.\medskip\\
(iii) \bfi{Enforced residual-based angle condition.}
A key ingredient of the proposed framework is the enforced angle condition,
which guarantees that the angle between the effective search direction and
the negative residual remains uniformly acute. This geometric condition
ensures sufficient descent independently of the magnitude and algebraic form
of the raw subspace direction generated by the conjugate-gradient-type
recursion.

The effective search direction is additionally controlled through an explicit
scaling rule that bounds its norm above and below by fixed multiples of the
current residual norm. Consequently, global convergence and convergence-rate results can be
established without imposing boundedness assumptions on the raw subspace
directions beyond the safeguarded boundedness of the updating coefficients
specified in property~{\rm (P1)} (defined by \eqref{e.P1}, below). This differs from many
analyses of conjugate-gradient-type and subspace methods, in which boundedness
of the generated directions or parameters is assumed directly.\medskip\\
(iv) \bfi{Line search step sizes.}
The line-search step-size implementation of {\tt PLS-S} employs a
projection-based line search along the effective subspace direction. Unlike Wolfe- or Armijo-type procedures \cite{nocedal2006numerical} and the improved Goldstein procedure \cite{CLS}, this strategy does not rely on Jacobian information or on the decrease of a merit function. Under the Lipschitz
continuity assumption and the enforced angle and scaling conditions, the line-search condition is well defined. When the Lipschitz constant is known,
the safeguard parameter can be selected explicitly so that the accepted step
sizes possess a uniform positive lower bound. This property is essential in
the convergence-rate and residual-evaluation complexity analyses.\medskip\\
(v) \bfi{Fixed step size.}
In addition to the line-search step-size strategy, {\tt PF-S} admits a fixed
step-size implementation. If the fixed step size satisfies an admissibility
condition depending on the Lipschitz constant, the line-search parameter, and
the enforced angle constant, then the sufficient descent inequality and the
projection estimates required by the convergence analysis remain valid. This
case avoids backtracking overhead while retaining the global convergence
and local convergence-rate guarantees established for the exact projection
framework.\medskip\\
(vi) \bfi{Asymptotic and canonical \texttt{OGDA}-type direction structures.}
We establish a unifying structural interpretation of the proposed subspace
directions and several classical first-order schemes through an asymptotic
\texttt{OGDA}-type residual--memory representation. Specifically, along
subsequences converging to a solution, the effective directions generated by
{\tt PLS-S} and {\tt PF-S} can be represented asymptotically as the negative current residual
plus a controlled memory correction involving previous residual information
and a remainder that converges to zero along the considered subsequence.\medskip\\
(vii) \bfi{Residual--memory representability and local residual decay.}
We examine the update directions generated by {\tt EG}, {\tt OGDA}, and
{\tt EAG} and identify an analogous canonical residual--memory structure
under the stated asymptotic assumptions. Fast {\tt OGDA} is discussed
separately because its reduction to the canonical form depends on additional
method-specific parameter identities. This places the
considered directions within a common geometric framework based on their
interaction with the negative residual. The result is structural rather than
algorithmic: it does not independently establish convergence of the classical
schemes and does not replace their method-specific Lyapunov analyses or
sharper convergence-rate results.\medskip\\
(viii) \bfi{Convergence and complexity analysis.}
Global convergence of {\tt PLS-S}/{\tt PF-S} is established under monotonicity and
Lipschitz continuity of the operator, together with the enforced angle
condition and the scaling rule imposed on the effective subspace directions.
No cocoercivity or strong monotonicity assumption is required. Under the exact
projection rule, the generated sequence is Fej\'er monotone with respect to
the solution set, and the global convergence mechanism follows the
projection-based approach underlying the {\tt PLS} method of Solodov and
Svaiter \cite{SS}. The analysis does not rely on merit functions, Jacobian
information, or Wolfe-type line searches \cite{nocedal2006numerical}, which are generally unavailable or
ineffective for nonlinear monotone equations
\cite[Chapter~5]{NW}.

The raw conjugate-gradient-type subspace directions may be unbounded; see, for example
\cite{al2,al1,Andrei22013,Andrei12013,bab1,bab2,HZ0,HP}.
Nevertheless, the convergence analysis requires boundedness only of the
effective scaled directions relative to the residual and does not assume
boundedness of the raw directions themselves. Under monotonicity and Lipschitz continuity, the exact projection variant
satisfies the baseline best-iterate estimate $\min_{0\le k\le\ell}\|E(x_k)\|
=
O(\ell^{-1/2})$. Denoted by $\operatorname{dist}(x,C):=\inf\{\|x-y\|:y\in C\}$ is the distance from a point $x$ to a nonempty closed set $C$ and by $X^*$ is the solution set of \gzit{e.ME}. Then, we define the distance sequence by
\begin{equation}\label{e.defdell}
 d_\ell:=\dist(x_\ell,X^*).  
\end{equation} 
If a local error bound holds, then $d_\ell=o(\ell^{-1/2})$.
Under the stronger local Jacobian regularity condition at the limit solution,
which in particular implies the required local error bound and local isolation
of the limit solution, the iterates converge locally
$R$-linearly, and the last-iterate residual satisfies $\|E(x_\ell)\|=o(\ell^{-1})$. These results apply to both the line-search \texttt{PLS-S} and fixed-step
\texttt{PF-S} realizations, subject to their respective step-size admissibility
conditions.

We also derive explicit iteration and residual-evaluation complexity bounds. The baseline best-iterate guarantee requires $O(\varepsilon^{-2})$
iterations to produce an iterate whose residual does not exceed
$\varepsilon$, whereas the local linear regime yields the pointwise bound $O\!\left(\log\varepsilon^{-1}\right)$. For the line-search step-size implementation, a uniform bound is additionally
obtained for the number of residual evaluations required by the backtracking
procedure at each iteration.\medskip\\
(ix) \bfi{Numerical results.}
We report numerical experiments on a collection of benchmark problems,
including learning-based models formulated as monotone equations. Both the
line-search {\tt PLS-S} methods and their fixed-step {\tt PF-S} counterparts
are compared, and the most competitive variants are further assessed against
{\tt EG}, {\tt OGDA}, and related accelerated schemes
\cite{korpelevich1976extragradient,Malitsky2019,popov1980modification}.
The experiments evaluate robustness and computational efficiency in terms of
solved problems, residual evaluations, and execution time, together with
convergence behavior on representative learning-based problems. The detailed
comparisons reported in \cite{suppMat} show that several proposed variants are
fully robust on the complete test set, with the fixed-step variants being
particularly competitive in residual-evaluation efficiency.

\section{Our proposed projection-based subspace framework}\label{sec.new}
We consider the NME defined by \eqref{e.ME}, that is $E(x) = 0$, $x\in \mathbb{R}^n$, and
the mapping $E: \mathbb{R}^n \to \mathbb{R}^n$ is continuous and monotone, i.e.,
\eqref{e.monotone} holds.

\subsection{Projected line search for the NME}\label{sub.pls} 

We now describe the projected line-search framework applied to the
original operator \(E\). Given a search direction \(p_\ell\) satisfying
the descent condition
\begin{equation}\label{e.des}
p_\ell^{T}E(x_\ell) < 0,
\end{equation}
like the classical {\tt PLS} in \cite{SS}, the {\tt PLS-S} method computes the trial point
\begin{equation}\label{e.xbar}
\bar{x}_\ell := x_\ell + \mu_\ell p_\ell,
\end{equation}
where $\mu_\ell>0$ is chosen such that the sufficient descent
condition holds:
\begin{equation}\label{e.bls}
p_\ell^{T}E(\bar{x}_\ell)
\le
-\rho\,\mu_\ell\|p_\ell\|^{2},
\qquad \rho\in(0,1).
\end{equation}
Using \gzit{e.xbar} and \gzit{e.bls}, we obtain 
\begin{equation}\label{e.rbls}
(x_\ell-\bar{x}_\ell)^T E(\bar{x}_\ell)
=
-\mu_\ell p_\ell^T E(\bar{x}_\ell)
\ge
\rho\,\mu_\ell^2\|p_\ell\|^2
>
0.
\end{equation}
We assume throughout that the solution set $X^*$ of \gzit{e.ME} is nonempty; we refer to this as the {\it assumption {(\Asol)}}. Consequently, for every $x^*\in X^*$, monotonicity of $E$ together with $E(x^*)=0$ implies
\[
\big(E(\bar{x}_\ell)-E(x^*)\big)^T
(\bar{x}_\ell-x^*)
\ge0.
\]
Hence, $(x^*-\bar{x}_\ell)^T E(\bar{x}_\ell)
\le0$. Therefore, the hyperplane
\begin{equation}\label{e.Hell}
H_\ell :=
\{x\in\mathbb{R}^n :
(x-\bar{x}_\ell)^T E(\bar{x}_\ell)=0\}
\end{equation}
separates the current iterate $x_\ell$ from the original solution set
$X^*$. Indeed, using \eqref{e.xbar} and \eqref{e.bls}, we obtain
\begin{equation}
(x_\ell-\bar{x}_\ell)^T E(\bar{x}_\ell)>0,
\label{eq:positive-inner-product}
\end{equation}
whereas $(x^*-\bar{x}_\ell)^T E(\bar{x}_\ell)\le0 $ for all $x^*\in X^*$. The half-space
\begin{equation}\label{e:semspa}
\ol H_\ell :=
\{x\in\mathbb{R}^n :
(x-\bar x_\ell)^T E(\bar x_\ell)\le 0\}
\end{equation}
and the new iterate 
\begin{equation}\label{eq:projection-step}
x_{\ell+1}:=
P_{\ol H_\ell}(x_\ell)
\end{equation}
are defined. From
\eqref{e.monotone}, $(x^*-\bar x_\ell)^T E(\bar x_\ell)\le0$, so that
\begin{equation}\label{e.XH}
X^*\subseteq \ol H_\ell;
\end{equation}
hence, the projection
set is nonempty. Since \eqref{eq:positive-inner-product} implies that
$x_\ell\notin \ol H_\ell$, this projection reduces to the
orthogonal projection onto the separating hyperplane. The
projection parameter is
\begin{equation}\label{e.lamb}
\lambda_\ell
:=
\frac{(x_\ell-\bar{x}_\ell)^T E(\bar{x}_\ell)}
{\|E(\bar{x}_\ell)\|^{2}},
\end{equation}
and the new iterate is given explicitly by
\begin{equation}\label{e.proj}
x_{\ell+1}
:=
P_{\ol H_\ell}(x_\ell)=
x_\ell
-
\lambda_\ell E(\bar{x}_\ell),
\end{equation}
which is the Euclidean projection of $x_\ell$ onto $\ol H_\ell$.

\subsection{Multi-Term Subspace Directions with Controlled Scaling}\label{sub.subdir}
In this subsection, we construct and analyze a family of multi-term
Jacobian-free subspace directions that form the search mechanism of the
{\tt PLS-S}/{\tt PF-S} framework. The objective is twofold. First, we introduce
two-term and three-term directions that incorporate structured memory
information from previous iterates, thereby enriching the search subspace
beyond the current residual direction. Second, we equip these directions
with an explicit scaling rule that controls their magnitude relative to
the original residual \(E(x_\ell)\). This control is essential
for establishing the subsequent convergence results independently of the
specific algebraic form of the updating coefficients.

The resulting directions are structurally inspired by classical CG-type
updates, but they differ from traditional CG directions in several
essential aspects. In particular, we do not impose conjugacy conditions
and do not employ Armijo- or Wolfe-type line searches. Instead, the step
size is determined by the projected line-search mechanism described in
Section~\ref{sub.pls}.  

For an arbitrary initial point \(x_0\in\mathbb{R}^n\), we define
\begin{equation}\label{e.sy}
s_{\ell-1}
=
\bar{x}_{\ell-1}-x_{\ell-1},
\qquad
y_{\ell-1}
=
E(\bar{x}_{\ell-1})
-
E(x_{\ell-1}),
\end{equation}
where \(\bar{x}_{\ell-1}\) is given by \eqref{e.xbar}. The vectors \(s_{\ell-1}\) and \(y_{\ell-1}\) contain step and
residual-difference information from the previous iteration. The directions
\texttt{JFSdir} are divided into the following four groups:
\begin{equation}\label{e.pcg1}
p_\ell=
\left\{
\begin{array}{lcll}
\operatorname{span}\big(E(x_\ell),p_{\ell-1}\big)
&=&
- E(x_\ell)
+ \beta_\ell p_{\ell-1}
& \text{({\tt JFS2dir})},
\\[8pt]

\operatorname{span}\big(E(x_\ell),p_{\ell-1},y_{\ell-1}\big)
&=&
- E(x_\ell)
+ \beta^1_\ell p_{\ell-1}
+ \beta^2_\ell y_{\ell-1}
& \text{({\tt JFS3dir1})},
\\[8pt]

\operatorname{span}\big(E(x_\ell),y_{\ell-1},s_{\ell-1}\big)
&=&
- E(x_\ell)
+ \beta^3_\ell y_{\ell-1}
+ \beta^4_\ell s_{\ell-1}
& \text{({\tt JFS3dir2})},
\\[8pt]

\operatorname{span}\big(E(x_\ell),p_{\ell-1},h_{\ell}\big)
&=&
- E(x_\ell)
+ \beta_\ell p_{\ell-1}
+ \beta^5_\ell h_{\ell}
& \text{({\tt JFS3dir3})}.
\end{array}
\right.
\end{equation}
The directions in \eqref{e.pcg1} are candidate directions generated
from the current residual and the available memory vectors without evaluating gradients or Jacobian matrices. For this reason, we refer to them as \texttt{JFSdir} rather than CG directions. Before being used in the projected line search, each candidate direction is, when necessary, modified so as to satisfy the angle condition (defined by \gzit{e.rac}, below) with respect to \(E(x_\ell)\), and the resulting direction is then scaled by \gzit{e.scaleDir}.

We define the coefficients $\beta_\ell$ and $\beta^i_\ell$, $i=1,\ldots,5$, in Tables~\ref{tab:jfs2}--\ref{tab:jfs3dir3} of Appendix~\ref{app:table}, where we also establish their well-definedness. We here explain these groups as follows:\medskip\\
(i) \bfi{Group \texttt{JFS2dir} of subspace directions with two terms.} This group has the form of the \texttt{CG} directions of Fletcher--Reeves (\texttt{FR})~\cite{flre}, Polak--Ribi\'ere (\texttt{PR})~\cite{pol}, Hestenes--Stiefel (\texttt{HS})~\cite{hes}, Dai--Yuan (\texttt{DY})~\cite{dai99}, Liu--Storey (\texttt{LS})~\cite{liu1}, Dai--Liao (\texttt{DL})~\cite{dl}, and Hager--Zhang (\texttt{HZ})~\cite{HZ0}.\medskip\\
(ii) \bfi{First group \texttt{JFS3dir1} of subspace directions with three terms.} This group has the form of the \texttt{CG} directions of Zhang et al.~\cite{ZZL1,ZZL2} ({\tt Z1} and {\tt Z2}).\medskip\\
(iii) \bfi{Second group \texttt{JFS3dir2} of subspace directions with three terms.} This group has the form of the \texttt{CG} directions of Andrei~\cite{Andrei22013,Andrei12013} ({\tt An1} and {\tt An2}) and Deng et al.~\cite{Deng2015} ({\tt De}).\medskip\\
(iv) \bfi{Third group \texttt{JFS3dir3} of subspace directions with three terms.} This group has the form of the \texttt{CG} directions of Al-Baali et al.~\cite{al1}.

\paragraph{Scaling directions.} Let \(p_\ell\neq0\) denote the direction obtained after the angle safeguard.
We scale it relative to the original residual
\(E(x_\ell)\) according to
\begin{equation}\label{e.scaleDir}
p_{\ell}^{\scal}
:=
\eta_{\ell}p_{\ell},
\qquad
\eta_{\ell}
:={
\mathrm{mid}\left\{
\zeta\|E(x_{\ell})\|,
\,
\|p_{\ell}\|,
\,
\|E(x_{\ell})\|
\right\}
}/{
\|p_{\ell}\|
}.
\end{equation}
By construction, the scaled direction satisfies
\begin{equation}\label{e:sccon}
\zeta\|E(x_{\ell})\|
\;\le\;
\|p^{\scal}_{\ell}\|
\;\le\;
\|E(x_{\ell})\|.
\end{equation}

Here, \(0<\zeta<1\) is a tuning parameter and
\(\mathrm{mid}\{a,b,c\}\) denotes the median of the three scalars
\(a,b,c\), namely,
\[
\mathrm{mid}\{a,b,c\}
=
a+b+c
-
\min\{a,b,c\}
-
\max\{a,b,c\}.
\]

\paragraph{Parameter control in the subspace construction.}
The subspace construction is carried out under the following intrinsic design properties.

\textbf{(P1) Controlled updating coefficients.}
The updating parameters in \eqref{e.pcg1} are algorithmic quantities
selected to control the interaction between the current residual and
the memory components. The coefficients
\(\beta_\ell\) and \(\beta_\ell^i\), defined in
Tables~\ref{tab:jfs2}--\ref{tab:jfs3dir3}, are generated by safeguarded
formulas, which satisfy
\begin{equation}\label{e.P1}
\beta_\ell,\beta_\ell^i
\in
[\beta_{\min},\beta_{\max}]
\qquad
\text{for all }\ell
\text{ and }i=1,\ldots,5,
\end{equation}
where $-\infty
<
\beta_{\min}
\le
\beta_{\max}
<
\infty$. The boundedness of these coefficients prevents direct unbounded
amplification by the scalar updating parameters.

\textbf{(P2) Algorithmic control of the scaled direction.}

The definition of \(\eta_\ell\) in \eqref{e.scaleDir} directly enforces
the magnitude condition \eqref{e:sccon}. In particular,
\[
\|p_\ell^{\scal}\|
=
\mathrm{mid}
\left\{
\zeta\|E(x_\ell)\|,
\,
\|p_\ell\|,
\,
\|E(x_\ell)\|
\right\}.
\]
No uniform upper or lower bound on the scalar \(\eta_\ell\) is required
for the convergence analysis; the essential property is the direct norm
control of \(p_\ell^{\scal}\).

\paragraph{\textbf{\texttt{OGDA}-type viewpoint of the subspace directions.}}
The directions defined in \eqref{e.pcg1} contain residual-memory terms
and therefore admit an algebraic interpretation reminiscent of
\texttt{OGDA}-type schemes. In particular, some choices of the memory
vector and updating coefficients combine a current residual
with information derived from previous residuals.

This interpretation is structural only. It does not imply that the
directions reproduce the exact fast-\texttt{OGDA} recurrence or inherit
its last-iterate convergence rate. A precise residual-memory
representation is provided later in \cite[Theroem~1]{suppMat}, after the required convergence and asymptotic-regularity properties of {\tt PLS-S}/{\tt PF-S} have been established.

\subsection{Enforcing angle condition}

In this section, we introduce the angle-safeguarding mechanism that enables
the use of the different subspace directions discussed in
Section~\ref{sub.subdir} by enforcing the angle condition whenever it is violated. The accepted direction must provide uniform descent for the operator
\(E\), since the line search and separating geometry are defined with
respect to \(E\).

Inspired by the gradient-based angle condition proposed in \cite{NA}, we
define the residual-based angle condition
\begin{equation}\label{e.rac}
\frac{(p_\ell)^T E(x_\ell)}
{\|p_\ell\|\,
 \|E(x_\ell)\|}
\le
-\Delta_{\angle}<0,
\hspace{3.0mm}
\text{with}
\hspace{3.0mm}
0<\Delta_{\angle}<1.
\end{equation}
If \(p_\ell\) does not satisfy \eqref{e.rac}, our method modifies it so that the resulting direction satisfies
\eqref{e.rac}. For this purpose, assuming \(E(x_\ell)\neq0\) and
\(p_\ell\neq0\), we define
\begin{eqnarray}\label{e.sigma}
\sigma_1 &:=& E(x_\ell)^T E(x_\ell)>0, \ \
\sigma_2 := (p_\ell)^T p_\ell>0, \nonumber\\
\sigma   &:=& E(x_\ell)^T p_\ell, \ \
\bar{\sigma} := \displaystyle
\frac{\sigma}{\sqrt{\sigma_1\sigma_2}}\in[-1,1]
\end{eqnarray}
and compute
\begin{equation}\label{e.tdef}
t
:=
\D\frac{\sigma+\Delta_{\angle}\sqrt{w}}{\sigma_1}
\in(0,\infty),
\qquad
w
:=
\D\frac{
\sigma_1\sigma_2(1-\bar{\sigma}^2)
}{
1-\Delta_{\angle}^2
}.
\end{equation}
We then define
\begin{equation}\label{e.pcg2}
p^{\modify}_{\ell}
:=
p_\ell-tE(x_\ell).
\end{equation}
If \(w=0\), we set \(p^{\modify}_\ell=-E(x_\ell)\). Otherwise, we use
\eqref{e.pcg2}. When the modification is applied, we then set
\(p_\ell:=p^{\modify}_\ell\); if the angle condition is already satisfied,
\(p_\ell\) is left unchanged. The resulting nonzero direction \(p_\ell\)
is then scaled according to \eqref{e.scaleDir}, yielding
\(p^{\scal}_\ell\).

The following result, which is adapted from Proposition~5.2 in
\cite{NA}, shows that the modification \gzit{e.pcg2} enforces the
required angle condition \gzit{e.rac} with respect to the original residual
\(E(x_\ell)\).

\begin{prop}\label{p.angle}
Suppose that $E(x_\ell)\neq0$, $p_\ell\neq0$, and $0<\Delta_{\angle}<1$. If the candidate direction $p_\ell$ does not
satisfy the angle condition \gzit{e.rac}, then, when \(w>0\), the modified direction
\gzit{e.pcg2} with \(t\) given in \gzit{e.tdef} satisfies \gzit{e.rac} as  
\[
\frac{
(p^{\modify}_\ell)^T E(x_\ell)
}{
\|p^{\modify}_\ell\|\,
\|E(x_\ell)\|
}
=
-\Delta_{\angle}.
\]
If \(w=0\), the fallback direction
\(p^{\modify}_\ell=-E(x_\ell)\) satisfies \gzit{e.rac} with cosine equal to \(-1\).
Moreover, the subsequent scaling \eqref{e.scaleDir} preserves the angle condition since
\(\eta_\ell>0\).
\end{prop}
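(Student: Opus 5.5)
The plan is a direct computation in the two-dimensional quantities $\sigma_1,\sigma_2,\sigma$ of \eqref{e.sigma}. Write $g:=E(x_\ell)$ and $q:=p^{\modify}_\ell=p_\ell-tg$. Then
\[
q^Tg=\sigma-t\sigma_1, \qquad \|q\|^2=\sigma_2-2t\sigma+t^2\sigma_1 .
\]
With $t$ from \eqref{e.tdef} we have $t\sigma_1=\sigma+\Delta_{\angle}\sqrt{w}$, so $q^Tg=-\Delta_{\angle}\sqrt{w}<0$ whenever $w>0$. The target identity is therefore equivalent to $(q^Tg)^2=\Delta_{\angle}^2\|q\|^2\sigma_1$ together with the correct sign, and the sign has already been checked.

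The main step is to verify $\|q\|^2\sigma_1=w$. Completing the square gives
\[
\sigma_1\|q\|^2=(t\sigma_1-\sigma)^2+\sigma_1\sigma_2-\sigma^2 .
\]
Substituting $t\sigma_1-\sigma=\Delta_{\angle}\sqrt{w}$ and using $\sigma_1\sigma_2-\sigma^2=\sigma_1\sigma_2(1-\bar\sigma^2)=(1-\Delta_{\angle}^2)w$ from the definition of $w$, we obtain
\[
\sigma_1\|q\|^2=\Delta_{\angle}^2w+(1-\Delta_{\angle}^2)w=w .
\]
Hence $\|q\|\,\|g\|=\sqrt{w}$ and the cosine equals $-\Delta_{\angle}\sqrt{w}/\sqrt{w}=-\Delta_{\angle}$. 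This also shows $q\neq0$ when $w>0$, so the scaling step is well defined. The only delicate point is checking that the definition of $w$ was chosen precisely so that this identity closes; I expect this algebra to be the main (though mild) obstacle.

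I would also confirm that $t>0$, as asserted in \eqref{e.tdef}. If $\sigma\ge0$ this is immediate. If $\sigma<0$, violation of \eqref{e.rac} means $-\bar\sigma<\Delta_{\angle}$, i.e. $\bar\sigma^2<\Delta_{\angle}^2$. Then
\[
\Delta_{\angle}^2w=\frac{\Delta_{\angle}^2\sigma_1\sigma_2(1-\bar\sigma^2)}{1-\Delta_{\angle}^2}>\sigma_1\sigma_2\bar\sigma^2=\sigma^2,
\]
because $\Delta_{\angle}^2(1-\bar\sigma^2)>\bar\sigma^2(1-\Delta_{\angle}^2)$ reduces to $\Delta_{\angle}^2>\bar\sigma^2$. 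So $\Delta_{\angle}\sqrt{w}>|\sigma|$ and $t>0$. This is not needed for the cosine identity itself.

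The fallback case is trivial: if $w=0$, then $q=-g$ has cosine $-\|g\|^2/\|g\|^2=-1\le-\Delta_{\angle}$. Finally, the cosine in \eqref{e.rac} is invariant under multiplication of $p_\ell$ by any positive scalar. Since $\eta_\ell>0$ (its numerator is a median of positive numbers, because $E(x_\ell)\ne0$ and the modified direction is nonzero), $p^{\scal}_\ell$ has the same cosine and the angle condition is preserved.
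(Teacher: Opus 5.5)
Your proposal is correct and follows essentially the same route as the paper: both compute $(p^{\modify}_\ell)^TE(x_\ell)=\sigma-t\sigma_1=-\Delta_{\angle}\sqrt{w}$ and complete the square to get $\sigma_1\|p^{\modify}_\ell\|^2=w$, from which the cosine $-\Delta_{\angle}$ follows. Your extra checks are valid and fill in details the paper only asserts: that $t>0$ when the angle condition is violated, and that $p^{\modify}_\ell\neq0$ when $w>0$, so the scaling step is well defined.
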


\begin{proof}
Recall the definitions in \gzit{e.sigma}. From
\eqref{e.pcg2}, we obtain
\[
(p^{\modify}_\ell)^T E(x_\ell)
=
\sigma-t\sigma_1 \ \ \mbox{and} \ \
\|p^{\modify}_\ell\|^2
=
\sigma_2-2t\sigma+t^2\sigma_1.
\]
Therefore, the equality form of the angle condition \eqref{e.rac} is
\begin{equation}\label{e.pr11}
\frac{
\sigma-t\sigma_1
}{
\sqrt{
\sigma_1
\bigl(
\sigma_2-2t\sigma+t^2\sigma_1
\bigr)
}
}
=
-\Delta_{\angle}.
\end{equation}
From the definition of \(t\) in \eqref{e.tdef}, we obtain $\sigma-t\sigma_1
=
-\Delta_{\angle}\sqrt{w}
\le0$. Moreover,
\[
\begin{aligned}
\sigma_1
\bigl(
\sigma_2-2t\sigma+t^2\sigma_1
\bigr)
&=
\sigma_1\sigma_2-\sigma^2
+
(\sigma-t\sigma_1)^2
\\
&=
\sigma_1\sigma_2(1-\bar{\sigma}^2)
+
(\sigma-t\sigma_1)^2.
\end{aligned}
\]
Using $\sigma-t\sigma_1
=
-\Delta_{\angle}\sqrt{w}$ 
and the definition of \(w\), we obtain
\begin{equation}\label{e.pr12}
(1-\Delta_{\angle}^2)
(\sigma-t\sigma_1)^2
=
\Delta_{\angle}^2
\sigma_1\sigma_2
(1-\bar{\sigma}^2).
\end{equation}
Consequently, $\sigma_1
\bigl(
\sigma_2-2t\sigma+t^2\sigma_1
\bigr)
=
w$. Provided that \(w>0\), it follows that
\[
\frac{
\sigma-t\sigma_1
}{
\sqrt{
\sigma_1
\bigl(
\sigma_2-2t\sigma+t^2\sigma_1
\bigr)
}
}
=
\frac{-\Delta_{\angle}\sqrt{w}}{\sqrt{w}}
=
-\Delta_{\angle}.
\]
If \(w=0\), then the candidate direction is collinear with
\(E(x_\ell)\). Since the angle condition is violated, the direction
cannot be a negative multiple of \(E(x_\ell)\). In this degenerate case,
we set $p^{\modify}_\ell=-E(x_\ell)$, which satisfies \eqref{e.rac} with cosine equal to \(-1\).
This completes the proof. \qed
\end{proof}

In finite-precision arithmetic, rounding errors may occasionally produce
\(\bar{\sigma}\notin[-1,1]\). We therefore replace
\(\bar{\sigma}\) by its clipped value, which can be written as
 $\bar{\sigma}_{\rm clip}
:=
\min\{1,\max\{-1,\bar{\sigma}\}\}$.  Using it, we then compute the real value $w
=
{
\sigma_1\sigma_2
\max\{0,1-\bar{\sigma}_{\rm clip}^{\,2}\}
}/{
(1-\Delta_{\angle}^2
)}$.

\subsection{Overview of the proposed projection-based subspace framework}\label{sec:alg}

\begin{algorithm}[http!]
    \caption{Projection-based subspace framework for monotone equations}\label{a.FDS}
    \begin{algorithmic}[1]
        \Statex {\bf Input.} Initial point $x_0\in\mathbb{R}^n$, the maximal number {\tt nRmax} of residual evaluations, minimum
        threshold $\eps\in(0,1)$ on the norm of the residual vector
        \algrule
       
        \Statex {\bf Tuning parameters.} $\Delta_{\angle}\in(0,1)$ (parameter for the angle condition), $\rho\in(0,1)$ (parameter for sufficient descent), $\gamma>1$ (parameter for reducing the line-search step sizes), $\mu_{\init}\in(0,1]$ (initial allowable step size in the line search), $\alpha>0$ (fixed step size for \texttt{PF-S} satisfying
\eqref{e.alpha_bound}), $\beta_{\min},\beta_{\max}\in\mathbb{R}$ (bounds for subspace coefficients), and $\zeta\in(0,1)$ (parameter for the direction scaling)
        \algrule
        \Statex {\bf Output.} An approximate solution of \gzit{e.ME}
        \algrule
        \State ({\sc S0}) Set $\ell:=1$ and $p_0:=p_0^{\scal}:=-E(x_0)$;

        \While{1}

     \State ({\sc S1}) Find a step size $\mu_{\ell-1}\in[\mu_{\min},\mu_{\init}]$ satisfying \eqref{e.bls} with $p_{\ell-1}^{\scal}$, \Comment{({\sc S1$_a$})}
\Statex \spc \spc ~ evaluate $\bar x_{\ell-1}:=x_{\ell-1}+\mu_{\ell-1}p_{\ell-1}^{\scal}$, and compute $E(\bar{x}_{\ell-1})$; instead
\Statex \spc \spc ~ for the fixed-step \texttt{PF-S} realization, set $\mu_{\ell-1}:=\alpha$ satisfying \eqref{e.alpha_bound}, \Comment{({\sc S1$_b$})}
\Statex \spc \spc ~ evaluate $\bar x_{\ell-1}:=x_{\ell-1}+\alpha p_{\ell-1}^{\scal}$, and compute $E(\bar{x}_{\ell-1})$;

           \State  ({\sc S2}) \bfi{Stop} if {\tt nRmax} is reached;
           \State ({\sc S3}) Evaluate the new point by \eqref{eq:projection-step} as $x_\ell:=P_{\ol H_{\ell-1}}(x_{\ell-1})$;
           \vspace{0.001mm}
            \State  ({\sc S4}) \bfi{Stop} if  $\|E(x_{\ell})\| \le \eps$ or {\tt nRmax} is reached;

          \State ({\sc S5}) Compute the subspace direction $p_{\ell}$ by \eqref{e.pcg1}; if $p_\ell=0$, set $p_\ell:=-E(x_\ell)$;
\Statex \spc \spc ~ if $p_\ell$ does not satisfy the angle condition \eqref{e.rac}, replace it by $p_\ell^{\modify}$ as in 
\Statex \spc \spc ~ Proposition~\ref{p.angle}; scale the resulting direction with \eqref{e.scaleDir} to obtain $p_\ell^{\scal}$, and \Statex \spc \spc ~  set $\ell:=\ell+1$;

        \EndWhile
    \end{algorithmic}

\end{algorithm}
An overview of the proposed projection-based subspace framework is given in Algorithm~\ref{a.FDS}. It includes six steps, ({\sc S0})–({\sc S5}). Step~({\sc S0}) is performed once for $\ell = 1$, and Steps~({\sc S1})–({\sc S5}) are then repeated for $\ell> 1$. The framework includes both the line-search
\texttt{PLS-S} realization and the fixed-step \texttt{PF-S} realization,
which differ only in the step-size strategy used in Step~{\rm({\sc S1})}.

({\sc S0}) is an initialization step, setting $p_0=p^{\scal}_0=-E(x_0)$. If $E(x_0)=0$, then $x_0$ is a solution of \eqref{e.ME} and the algorithm terminates. Otherwise, $p^{\scal}_0$ satisfies both the scaling condition \eqref{e:sccon} and the angle condition \eqref{e.rac}.  

({\sc S1}) is a step-size selection (variant or fixed) step. Given $x_{\ell-1}$ and $p^{\scal}_{\ell-1}$, the trial point is defined by $\bar x_{\ell-1}=x_{\ell-1}+\mu_{\ell-1}p^{\scal}_{\ell-1}$, 
where the step size $\mu_{\ell-1}>0$ is chosen according to one of the
following two strategies:

({\sc S1$_a$}) {Line search step size.}
Set $\mu_{\ell-1}=\mu_{\init}\in(0,1]$ as an initial step size.
While the sufficient descent condition \eqref{e.bls} is not satisfied and
$\mu_{\ell-1}>\mu_{\min}$, update
$\mu_{\ell-1}\leftarrow \max\{\mu_{\ell-1}/\gamma,\mu_{\min}\}$, recompute $\bar x_{\ell-1}$ and the residual 
$E(\bar x_{\ell-1})$. The safeguard $\mu_{\min}$ is selected so that
\begin{equation}
0<\mu_{\min}
\le
\min\left\{
\mu_{\init},
{\Delta_{\angle}}/{(L+\rho)}
\right\},
\label{eq:mu-min-bound}
\end{equation}
where $L$ is the Lipschitz constant of $E$. Consequently, the sufficient descent condition \eqref{e.bls} is satisfied no later than the trial $\mu_{\ell-1}=\mu_{\min}$.
Once \eqref{e.bls} holds, the new iterate is obtained by \eqref{e.proj}.

Under the assumption {\rm (\Alip)}, i.e., that $E$ is Lipschitz continuous with constant $L>0$, the line search therefore admits a uniform positive lower bound on the accepted step size. The parameter $\mu_{\min}$ serves as a numerical safeguard and provides a uniform bound on the maximum number of backtracking reductions.

({\sc S1$_b$}) {Fixed step size.}
Choose a constant step size $\mu_{\ell-1}=\alpha$ satisfying
\begin{equation}\label{e.alpha_bound}
0<\alpha \le \min\{1,{\Delta_{\angle}}/{(L+\rho)}\}.
\end{equation}
Then, the trial point is defined by $\bar x_{\ell-1}=x_{\ell-1}+\alpha\,p^{\scal}_{\ell-1}$. The sufficient descent condition
\eqref{e.bls} holds with $\mu_{\ell-1}=\alpha$ (see Lemma~\ref{le:fixed-descent-modified}, below) and the new iterate is obtained
directly by the projection step \eqref{e.proj}. 

Note that later (see Lemma~\ref{le:lowerstep}, below), we use the specific choice
\begin{equation}\label{e.mu-min}
\mu_{\min}
=
\min\left\{
\mu_{\init},{\Delta_{\angle}}/{(\gamma(L+\rho))}
\right\},
\end{equation}
which satisfies $0<\mu_{\min}\le\mu_{\init}\le1$ and the general admissibility condition \eqref{eq:mu-min-bound} since $\gamma>1$.

({\sc S2}) is the first termination criterion. At this stage, the residual
$E(\bar{x}_{\ell-1})$ has already been evaluated at the trial point.
If the maximum number {\tt nRmax} of residual evaluations is attained, the
algorithm stops because the prescribed computational budget has been exhausted.

({\sc S3}) is the projection point step. It computes the projection parameter and the new iterate. Since \eqref{e.xbar} and \eqref{e.bls} give $(x_{\ell-1}-\bar{x}_{\ell-1})^TE(\bar{x}_{\ell-1})>0$, 
we necessarily have $E(\bar{x}_{\ell-1})\neq0$. Hence, the projection parameter \eqref{e.lamb} is well-defined, so the explicit formula \eqref{e.proj} is used. 

({\sc S4}) is the second termination criterion. At this step, the algorithm terminates if either $\|E(x_{\ell})\| \le \eps$, in which case $x_{\ell}$ is accepted as an $\eps$-approximate solution of \eqref{e.ME}, or the maximum number {\tt nRmax} of residual evaluations has been reached, indicating that the prescribed computational budget has been exhausted.

({\sc S5}) computes a scaled subspace direction.
It computes the quantities in \eqref{e.sy} and the candidate subspace direction $p_\ell$ by \eqref{e.pcg1}, where the parameters
$\beta_\ell$ and $\beta_\ell^i$, $i=1,\ldots,5$, are defined as in
Tables~\ref{tab:jfs2}--\ref{tab:jfs3dir3} of Appendix \ref{app:table} and restricted by \eqref{e.P1}. If the candidate direction satisfies $p_\ell=0$, it is replaced by $p_\ell=-E(x_\ell)$. If the resulting nonzero candidate direction does not satisfy the angle condition \eqref{e.rac}, it is modified according to \eqref{e.sigma}--\eqref{e.pcg2}, and the modified direction is again denoted by $p_\ell$. The resulting direction is then scaled by \eqref{e.scaleDir}. Since this scaling multiplies the direction by a positive scalar, it preserves the angle condition \eqref{e.rac}.

Under the assumption {\rm (\Alip)} on a set containing all iterates and trial points generated by the algorithm, the following lemma estimates the residual along a scaled direction satisfying the scaling and angle conditions.

\begin{lem} \label{lem:lipschitz-direction} Suppose that {\rm (\Alip)} holds and that
the scaled direction $p_\ell^{\scal}$ satisfies \eqref{e:sccon} and \eqref{e.rac}, i.e., 
\begin{equation}
(p_\ell^{\scal})^T E(x_\ell)
\le
-\Delta_{\angle}\|p_\ell^{\scal}\|^2,
\label{eq:angle-descent-bound}
\end{equation}
and recall $\bar x_\ell:=x_\ell+\mu p_\ell^{\scal}$ with $\mu>0$ from \gzit{e.xbar}. Then
\begin{equation}\label{e.lipschitz-direction}
(p_\ell^{\scal})^T E(\bar x_\ell)
\le
-\bigl(\Delta_{\angle}-L\mu\bigr)
\|p_\ell^{\scal}\|^2.
\end{equation}
\end{lem}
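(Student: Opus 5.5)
The plan is to split $E(\bar x_\ell)$ into $E(x_\ell)$ plus a difference and handle the two pieces separately. The first piece is already controlled by the angle-type bound \eqref{eq:angle-descent-bound}. The second is controlled by Lipschitz continuity (\Alip) together with Cauchy--Schwarz.

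Concretely, we would write
\[
(p_\ell^{\scal})^T E(\bar x_\ell)
=
(p_\ell^{\scal})^T E(x_\ell)
+
(p_\ell^{\scal})^T\bigl(E(\bar x_\ell)-E(x_\ell)\bigr).
\]
The first term is bounded above by $-\Delta_{\angle}\|p_\ell^{\scal}\|^2$, directly by \eqref{eq:angle-descent-bound}.

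For the second term, Cauchy--Schwarz followed by (\Alip) applied to the points $x_\ell$ and $\bar x_\ell$ gives
\[
(p_\ell^{\scal})^T\bigl(E(\bar x_\ell)-E(x_\ell)\bigr)
\le
\|p_\ell^{\scal}\|\,L\,\|\bar x_\ell-x_\ell\|.
\]
By the definition of the trial point, $\bar x_\ell-x_\ell=\mu p_\ell^{\scal}$, so the right-hand side equals $L\mu\|p_\ell^{\scal}\|^2$. Adding the two bounds yields \eqref{e.lipschitz-direction}.

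The only point needing care is the justification of \eqref{eq:angle-descent-bound} as the form of the angle condition used here. From \eqref{e.rac} we get $(p_\ell^{\scal})^TE(x_\ell)\le-\Delta_{\angle}\|p_\ell^{\scal}\|\,\|E(x_\ell)\|$, since the scaling in \eqref{e.scaleDir} is by the positive factor $\eta_\ell$ and so preserves the cosine (Proposition~\ref{p.angle}). The upper bound in \eqref{e:sccon}, $\|E(x_\ell)\|\ge\|p_\ell^{\scal}\|$, then gives \eqref{eq:angle-descent-bound}.

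Since the lemma takes \eqref{eq:angle-descent-bound} as a hypothesis anyway, this step is a consistency remark rather than a genuine obstacle. We also need that (\Alip) holds on a set containing both $x_\ell$ and $\bar x_\ell$, which is the standing assumption stated just before the lemma.
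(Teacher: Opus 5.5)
Your proposal is correct and matches the paper's proof: the same split of $E(\bar x_\ell)$ into $E(x_\ell)$ plus a difference, \eqref{eq:angle-descent-bound} for the first term, and Cauchy--Schwarz with (\Alip) and $\bar x_\ell-x_\ell=\mu p_\ell^{\scal}$ for the second. Your side remark deriving \eqref{eq:angle-descent-bound} from \eqref{e.rac} together with the upper bound in \eqref{e:sccon} is also correct, and it is exactly how the ``i.e.'' in the statement should be read.
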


\begin{proof}
By the Lipschitz continuity of $E$ and the Cauchy--Schwarz inequality,
\[
\begin{aligned}
(p_\ell^{\scal})^T E(\bar x_\ell)
&=
(p_\ell^{\scal})^T E(x_\ell)
+
(p_\ell^{\scal})^T
\bigl(
E(\bar x_\ell)-E(x_\ell)
\bigr)
\\
&\le
-\Delta_{\angle}
\|p_\ell^{\scal}\|^2
+
\|p_\ell^{\scal}\|
\|E(\bar x_\ell)-E(x_\ell)\|
\\
&\le
-\Delta_{\angle}
\|p_\ell^{\scal}\|^2
+
L\mu
\|p_\ell^{\scal}\|^2
=
-\bigl(
\Delta_{\angle}-L\mu
\bigr)
\|p_\ell^{\scal}\|^2.
\end{aligned}
\]
\qed
\end{proof}

In the following result, we show that the line search procedure in Step~({\sc S1$_a$}) of Algorithm \ref{a.FDS} terminates in a finite number of iterations.

\begin{prop}\label{pr0}
Suppose that {\rm (\Alip)} holds, the scaled direction satisfies \eqref{e:sccon} and \eqref{e.rac}, and $\mu_{\min}$ satisfies \gzit{eq:mu-min-bound}. Then the line search procedure in Step~{\rm({\sc S1$_a$})} of Algorithm \ref{a.FDS} is
well-defined. At every nonstationary iteration, it terminates after at
most $m_{\max}
:=
\left\lceil
\log_{\gamma}
\left({\mu_{\init}}/{\mu_{\min}}
\right)
\right\rceil$ step-size reductions. Since the initial trial is also evaluated, the
number of trial residual evaluations of the line search procedure is at most $m_{\max}+1
=
1+
\left\lceil
\log_{\gamma}
\left({\mu_{\init}}/{\mu_{\min}}
\right)
\right\rceil$.
\end{prop}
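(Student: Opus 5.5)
The plan has two parts. First I will show that the sufficient descent condition \eqref{e.bls} holds for every step size $\mu\in(0,\mu_{\min}]$, using Lemma~\ref{lem:lipschitz-direction}. Second, I will count the backtracking reductions needed before the trial reaches $\mu_{\min}$.

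\emph{Descent for small steps.} Fix a nonstationary iteration, so $E(x_\ell)\neq0$. By \eqref{e:sccon}, $\|p_\ell^{\scal}\|\ge\zeta\|E(x_\ell)\|>0$. By \eqref{e.rac} together with the upper bound in \eqref{e:sccon},
\[
(p_\ell^{\scal})^TE(x_\ell)\le-\Delta_{\angle}\|p_\ell^{\scal}\|\,\|E(x_\ell)\|\le-\Delta_{\angle}\|p_\ell^{\scal}\|^2,
\]
which is \eqref{eq:angle-descent-bound}. Lemma~\ref{lem:lipschitz-direction} then gives \eqref{e.lipschitz-direction} for any $\mu>0$. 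Condition \eqref{e.bls} with $\mu_\ell=\mu$ reads $(p_\ell^{\scal})^TE(\bar x_\ell)\le-\rho\mu\|p_\ell^{\scal}\|^2$, so it suffices that $\Delta_{\angle}-L\mu\ge\rho\mu$, i.e.\ $\mu\le\Delta_{\angle}/(L+\rho)$. By \eqref{eq:mu-min-bound}, every $\mu\le\mu_{\min}$ satisfies this. In particular \eqref{e.bls} holds at the trial $\mu=\mu_{\min}$.

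\emph{Counting reductions.} The trials are $\mu^{(j)}=\max\{\mu_{\init}\gamma^{-j},\mu_{\min}\}$ for $j=0,1,\dots$. The loop stops at the first $j$ for which \eqref{e.bls} holds or $\mu^{(j)}=\mu_{\min}$; by the first part, \eqref{e.bls} holds in the latter case anyway. Hence the accepted step lies in $[\mu_{\min},\mu_{\init}]$ and the procedure is well-defined. Whenever $\mu_{\init}\gamma^{-j}\le\mu_{\min}$, the trial equals $\mu_{\min}$, and this happens as soon as $j\ge\log_\gamma(\mu_{\init}/\mu_{\min})$. So the number of reductions is at most $m_{\max}=\lceil\log_\gamma(\mu_{\init}/\mu_{\min})\rceil$; this is $\ge0$ because $\mu_{\min}\le\mu_{\init}$ and $\gamma>1$. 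Adding the initial trial gives at most $m_{\max}+1$ residual evaluations.

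The only delicate point is that \eqref{e.rac} is stated as a cosine bound, whereas Lemma~\ref{lem:lipschitz-direction} needs it in the squared-norm form \eqref{eq:angle-descent-bound}. That conversion uses $\|p_\ell^{\scal}\|\le\|E(x_\ell)\|$ from \eqref{e:sccon}, and (A$_{\rm lip}$) must hold on the segment from $x_\ell$ to the trial points, as the paper assumes. The remaining steps are routine.
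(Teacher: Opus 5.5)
Your proof is correct and follows the paper's argument. Lemma~\ref{lem:lipschitz-direction} shows that every $\mu\le\Delta_{\angle}/(L+\rho)$ is accepted, so acceptance occurs at the trial $\mu_{\min}$ at the latest, and the safeguard is reached after at most $\lceil\log_\gamma(\mu_{\init}/\mu_{\min})\rceil$ reductions. Your explicit derivation of \eqref{eq:angle-descent-bound} from \eqref{e.rac}, via the upper bound in \eqref{e:sccon}, usefully justifies a step the paper only asserts with ``i.e.''
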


\begin{proof}
Let $\ell_0\ge0$ be a nonstationary iteration, so that $E(x_{\ell_0})\neq0$. From \eqref{eq:angle-descent-bound}, $(p_{\ell_0}^{\scal})^T
E(x_{\ell_0})
\le
-\Delta_{\angle}
\|p_{\ell_0}^{\scal}\|^2$. Let $\mu>0$ and define $\bar x_{\ell_0}
=
x_{\ell_0}
+
\mu p_{\ell_0}^{\scal}$. By Lemma \ref{lem:lipschitz-direction}, $(p_{\ell_0}^{\scal})^T
E(\bar x_{\ell_0})
\le
-\big(
\Delta_{\angle}-L\mu
\big)
\|p_{\ell_0}^{\scal}\|^2$. Therefore, if $\mu
\le{\Delta_{\angle}}/{(L+\rho)}$, then $\Delta_{\angle}-L\mu
\ge
\rho\mu$, and hence $(p_{\ell_0}^{\scal})^T
E(\bar x_{\ell_0})
\le
-\rho\mu
\|p_{\ell_0}^{\scal}\|^2$. 
Thus, the sufficient descent condition \eqref{e.bls} holds for every $0<\mu
\le{\Delta_{\angle}}/{(L+\rho)}$. Since $\mu_{\min}
\le {\Delta_{\angle}}/{(L+\rho)}$, the line search must accept a step size no later than the trial $\mu=\mu_{\min}$. It therefore terminates with a positive step size.

It remains to estimate the maximal number of reductions.
At each unsuccessful trial, the step size is reduced according to $\mu
\leftarrow
\max\left\{{\mu}/{\gamma},
\mu_{\min}
\right\}$. Starting from $\mu_{\init}$, after $m$ reductions before the safeguard is activated, we have $\mu
={\mu_{\init}}/{\gamma^m}$. The safeguard is reached once ${\mu_{\init}}/{\gamma^m}
\le
\mu_{\min}$. Equivalently, $\gamma^m
\ge{\mu_{\init}}/{\mu_{\min}}$. Therefore, the number of reductions is bounded by
\[
m_{\max}
:=
\left\lceil
\log_{\gamma}\!\left({\mu_{\init}}/{\mu_{\min}}\right)
\right\rceil,
\]
which is finite and independent of $\ell_0$. \qed
\end{proof}

\section{Global convergence analysis}\label{s.conv}

We now establish the global convergence properties of Algorithm \ref{a.FDS}.
The analysis relies on the monotonicity of the original operator, the separating
hyperplane geometry induced by the projected line search, and a uniform descent
property of the trial points.
Both the line-search step size and fixed step-size forms are covered within a
unified framework.

To establish the global convergence of Algorithm \ref{a.FDS}, we
assume (\Asol) and (\Alip) that are explained in Sections \ref{sub.pls} and \ref{sec:alg}, respectively.

The following lemma shows that, for a sufficiently small fixed step size,
the trial point $\bar x_\ell$ satisfies the sufficient descent inequality
\eqref{e.bls}, without invoking any line search.

\begin{lem}[Fixed step sufficient descent]
\label{le:fixed-descent-modified}
Suppose {\rm (\Alip)} holds and the scaled search direction
$p_\ell^{\scal}$ satisfies the scaling condition \eqref{e:sccon} and the enforced angle condition \eqref{e.rac}. Recall the trial point \gzit{e.xbar} with $p_\ell=p^{\scal}_\ell$ as $\bar x_\ell$. Assume the fixed step size $\alpha$ satisfies \eqref{e.alpha_bound}. Then, for all $\ell \ge 0$, the sufficient descent condition
\begin{equation}\label{e:fixed-bls-mod}
(p^{\scal}_\ell)^{T} E(\bar x_\ell)
\le
-\rho\alpha\,\|p_\ell^{\scal}\|^{2}
\end{equation}
holds. Consequently, \eqref{e:fixed-bls-mod} is precisely
\eqref{e.bls} with $\mu_\ell=\alpha$.
\end{lem}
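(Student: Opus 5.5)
The plan is to apply Lemma~\ref{lem:lipschitz-direction} directly with $\mu=\alpha$, and then use the upper bound on $\alpha$ from \eqref{e.alpha_bound} to turn $\Delta_{\angle}-L\alpha$ into at least $\rho\alpha$. This follows the same computation already used in the proof of Proposition~\ref{pr0}, only without backtracking.

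\emph{Step 1: the angle inequality in quadratic form.} First I will check that the scaled direction satisfies the hypothesis \eqref{eq:angle-descent-bound} of Lemma~\ref{lem:lipschitz-direction}. By Proposition~\ref{p.angle}, the direction produced in Step~({\sc S5}) satisfies \eqref{e.rac}, and scaling by $\eta_\ell>0$ preserves it. Hence
\[
(p_\ell^{\scal})^TE(x_\ell)\le-\Delta_{\angle}\|p_\ell^{\scal}\|\,\|E(x_\ell)\|.
\]
The scaling condition \eqref{e:sccon} gives $\|E(x_\ell)\|\ge\|p_\ell^{\scal}\|$, so the right-hand side is at most $-\Delta_{\angle}\|p_\ell^{\scal}\|^2$. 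For $\ell=0$ we have $p_0^{\scal}=-E(x_0)$, and the bound holds trivially with cosine $-1$.

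\emph{Step 2: apply the Lipschitz estimate.} Lemma~\ref{lem:lipschitz-direction} with $\mu=\alpha$ and $\bar x_\ell=x_\ell+\alpha p_\ell^{\scal}$ then yields
\[
(p_\ell^{\scal})^TE(\bar x_\ell)\le-(\Delta_{\angle}-L\alpha)\|p_\ell^{\scal}\|^2 .
\]

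\emph{Step 3: use the step-size bound.} From \eqref{e.alpha_bound} we have $\alpha\le\Delta_{\angle}/(L+\rho)$, which is equivalent to $L\alpha+\rho\alpha\le\Delta_{\angle}$, that is, $\Delta_{\angle}-L\alpha\ge\rho\alpha$. Multiplying by $-\|p_\ell^{\scal}\|^2\le0$ reverses the inequality and gives \eqref{e:fixed-bls-mod}. Since this is exactly \eqref{e.bls} with $\mu_\ell=\alpha$ and $p_\ell=p_\ell^{\scal}$, the claim follows.

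\emph{Expected difficulty.} There is no real obstacle. The only point needing care is Step~1: \eqref{e.rac} involves $\|E(x_\ell)\|$, and the upper bound in \eqref{e:sccon} is what converts it into the quadratic form \eqref{eq:angle-descent-bound}. I also need $\|p_\ell^{\scal}\|>0$ only to make the inequality meaningful; it holds at nonstationary iterations by \eqref{e:sccon}, and otherwise both sides of \eqref{e:fixed-bls-mod} are zero.
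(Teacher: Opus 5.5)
Your proposal is correct and follows essentially the same route as the paper's proof: apply Lemma~\ref{lem:lipschitz-direction} with $\mu=\alpha$, then use $\alpha\le\Delta_{\angle}/(L+\rho)$ to obtain $\Delta_{\angle}-L\alpha\ge\rho\alpha$. Your Step~1 also spells out how the quadratic angle bound \eqref{eq:angle-descent-bound} follows from \eqref{e.rac} and the upper bound in \eqref{e:sccon}, a step the paper simply cites.
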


\begin{proof}
From \eqref{eq:angle-descent-bound}, $(p_\ell^{\scal})^T E(x_\ell)
\le
-\Delta_{\angle}\,
\|p_\ell^{\scal}\|^2$ holds. Using {\rm (\Alip)}, \eqref{e.xbar}, and Lemma \ref{lem:lipschitz-direction}, $(p_\ell^{\scal})^T E(\bar x_\ell)
\le
-\big(\Delta_{\angle}-L\alpha\big)
\|p_\ell^{\scal}\|^2$.
From \eqref{e.alpha_bound}, $\alpha
\le {\Delta_{\angle}}/{(L+\rho)}$, so that $\Delta_{\angle}-L\alpha
\ge
\rho\alpha$. This proves \eqref{e:fixed-bls-mod}. \qed
\end{proof}

Inspired by \cite[Theorem~2.1]{SS}, we have, by the following lemma, the
boundedness of some sequences generated via Algorithm \ref{a.FDS}. Moreover,  using \eqref{e.defdell}, the projection inequality (see Proposition~\ref{p.Projection} in Appendix~\ref{app:Relaxed}) gives
\begin{equation}
d^2_{\ell+1} \le d^2_\ell -\|x_{\ell+1}-x_\ell\|^2 \ \ \text{or} \ \ \|x_{\ell+1}-x^*\|^2
\le
\|x_\ell-x^*\|^2
-
\|x_{\ell+1}-x_\ell\|^2
\label{eq:fejer-descent}
\end{equation}
and, in a special case, a separating argument shows that the distance to the solution set for the
constructed sequence decreases monotonically, which
provides the main Fej\'er-monotonicity estimate for the global convergence
analysis of our algorithm. The argument applies to both the line-search
step size and fixed step-size types of Algorithm \ref{a.FDS}, since in both cases the trial point satisfies a sufficient descent inequality.

\begin{lem}\label{le:bo}
Suppose that {\rm (\Asol)} and {\rm (\Alip)} hold and that Algorithm \ref{a.FDS} with either the line-search step size, discussed in {\rm({\sc S1$_a$})}, or the fixed step size $\mu_\ell=\alpha$, discussed in {\rm({\sc S1$_b$})}, satisfying
\eqref{e.alpha_bound} generates the sequences
$\{x_\ell\}_{\ell\geq0}$,
$\{\bar{x}_\ell\}_{\ell\geq0}$,
$\{E(x_\ell)\}_{\ell\geq0}$,
and $\{E(\bar{x}_\ell)\}_{\ell\geq0}$,
where $\bar{x}_\ell$ is defined by \eqref{e.xbar} as
$\bar{x}_\ell = x_\ell + \mu_\ell p^{\scal}_\ell$.
Then:
\medskip\\
(i) The inequality
\begin{equation}\label{e.fejer_uni}
\|x_{\ell+1}-x^*\|^2
\le
\|x_\ell-x^*\|^2
-
\Big(
{(x_\ell-\bar{x}_\ell)^T E(\bar{x}_\ell)}/
{\|E(\bar{x}_\ell)\|}
\Big)^2
\end{equation}
holds for every $x^*\in X^*$.
Consequently, the sequence $\{x_\ell\}_{\ell\geq0}$ is Fej\'er monotone with respect to $X^*$, i.e.,
\begin{equation}\label{e.Fejer}
\|x_{\ell+1}-x^*\|
\le
\|x_\ell-x^*\|,
\end{equation}
and the mentioned sequences are bounded.
\medskip\\
(ii) One of the following holds:
\medskip\\
(ii-1) Algorithm \ref{a.FDS} terminates in a finite number of iterations.
\medskip\\
(ii-2) Algorithm \ref{a.FDS} generates an infinite sequence $\{x_\ell\}_{\ell\geq 0}$ satisfying
\begin{equation}\label{e.sum2diffx}
\sum_{\ell=0}^{\infty}
\|x_\ell-\bar{x}_\ell\|^4
<\infty, \ \ \sum_{\ell=0}^{\infty}
\|x_{\ell+1}-x_\ell\|^2
<\infty
\end{equation}
and, respectively,  therefore
\begin{equation}\label{e.th0}
\lim_{\ell\to\infty}\|x_\ell-\bar{x}_\ell\|=0,
\qquad
\lim_{\ell\to\infty}\|x_{\ell+1}-x_\ell\|=0.
\end{equation}
\end{lem}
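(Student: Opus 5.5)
The plan is to follow the classical Solodov--Svaiter argument, splitting part~(i) into a projection estimate and part~(ii) into a telescoping argument.

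\textbf{Part (i).} Fix $x^*\in X^*$. By \eqref{e.XH}, $x^*\in\ol H_\ell$. By \eqref{eq:projection-step}, $x_{\ell+1}$ is the Euclidean projection of $x_\ell$ onto the closed convex set $\ol H_\ell$. The projection inequality (Proposition~\ref{p.Projection}) then gives \eqref{eq:fejer-descent}:
\[
\|x_{\ell+1}-x^*\|^2\le\|x_\ell-x^*\|^2-\|x_{\ell+1}-x_\ell\|^2 .
\]
From \eqref{e.proj} and \eqref{e.lamb},
\[
\|x_{\ell+1}-x_\ell\|=\lambda_\ell\|E(\bar x_\ell)\|=(x_\ell-\bar x_\ell)^TE(\bar x_\ell)/\|E(\bar x_\ell)\|.
\]
This is well defined because \eqref{e.rbls} (or Lemma~\ref{le:fixed-descent-modified} in the fixed-step case) forces $E(\bar x_\ell)\ne0$. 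Substituting yields \eqref{e.fejer_uni}, and \eqref{e.Fejer} follows immediately.

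Boundedness then follows in four steps:
\begin{itemize}
\item $\|x_\ell-x^*\|\le\|x_0-x^*\|$, so $\{x_\ell\}$ is bounded.
\item By (\Alip), $\|E(x_\ell)\|=\|E(x_\ell)-E(x^*)\|\le L\|x_0-x^*\|$, so $\{E(x_\ell)\}$ is bounded.
\item By \eqref{e:sccon} and $\mu_\ell\le\max\{\mu_{\init},\alpha\}\le1$, we get $\|\bar x_\ell-x_\ell\|\le\|E(x_\ell)\|$, so $\{\bar x_\ell\}$ is bounded.
\item Applying (\Alip) once more bounds $\{E(\bar x_\ell)\}$.
\end{itemize}

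\textbf{Part (ii).} If the algorithm stops at (S2) or (S4), we are in (ii-1). Otherwise, sum \eqref{eq:fejer-descent} from $0$ to $N$ and let $N\to\infty$:
\[
\sum_\ell\|x_{\ell+1}-x_\ell\|^2\le\|x_0-x^*\|^2<\infty .
\]
For the quartic sum, bound $\|x_{\ell+1}-x_\ell\|$ from below. By \eqref{e.rbls},
\[
(x_\ell-\bar x_\ell)^TE(\bar x_\ell)\ge\rho\mu_\ell^2\|p_\ell^{\scal}\|^2=\rho\|x_\ell-\bar x_\ell\|^2 .
\]
Let $M$ be the uniform bound on $\|E(\bar x_\ell)\|$ from part~(i). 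Then
\[
\|x_{\ell+1}-x_\ell\|\ge\rho\|x_\ell-\bar x_\ell\|^2/M .
\]
Squaring and summing gives $\sum\|x_\ell-\bar x_\ell\|^4\le M^2\rho^{-2}\sum\|x_{\ell+1}-x_\ell\|^2<\infty$. Summable terms tend to zero, which gives \eqref{e.th0}.

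\textbf{Main obstacle.} The delicate step is the uniform bound $M$. It requires the trial points to stay in a bounded region where (\Alip) applies, hence the chain $\|\bar x_\ell-x_\ell\|\le\mu_\ell\|p^{\scal}_\ell\|\le\|E(x_\ell)\|\le L\|x_0-x^*\|$. This chain uses the scaling rule \eqref{e:sccon} and the step-size caps $\mu_{\init}\le1$ and $\alpha\le1$, not any bound on the raw directions. I would also note that Lemma~\ref{le:fixed-descent-modified} and Proposition~\ref{pr0} ensure \eqref{e.bls} holds in both realizations, so the argument is uniform across them.
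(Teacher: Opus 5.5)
Your proposal is correct and takes essentially the same route as the paper. For (i), both use the projection inequality together with the explicit step length $\lambda_\ell\|E(\bar x_\ell)\|$. Boundedness then follows from Fej\'er monotonicity, Lipschitz continuity and the scaling rule with $\mu_\ell\le 1$. For the quartic sum, both combine the bound $(x_\ell-\bar x_\ell)^TE(\bar x_\ell)\ge\rho\|x_\ell-\bar x_\ell\|^2$ with a uniform bound on $\|E(\bar x_\ell)\|$. The only cosmetic difference is that the paper inserts this bound into the Fej\'er inequality and telescopes, whereas you compare with the already square-summable step lengths, which amounts to the same estimate.
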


\begin{proof}
(i) Let $x^*\in X^*$ be arbitrary, so that $E(x^*)=0$. From \eqref{e.XH}, $x^*\in \ol H_\ell$, where $\ol H_\ell$ is defined in
\eqref{e:semspa}.
Since $x_{\ell+1}
=
P_{\ol H_\ell}(x_\ell)$ and $x^*\in X^*\subseteq \ol H_\ell$, 
\gzit{eq:fejer-descent} is satisfied. Moreover,
$\|x_{\ell+1}-x_\ell\|
=
\operatorname{dist}(x_\ell,\ol H_\ell)$. Because the sufficient descent inequality implies \gzit{eq:positive-inner-product}, we obtain $E(\bar{x}_\ell)\neq0$  and, the current point $x_\ell$ lies outside
$\ol H_\ell$. So \eqref{e.fejer_uni} is obtained from \eqref{e.lamb} and \eqref{eq:fejer-descent} as $\operatorname{dist}(x_\ell,\ol H_\ell)
={
(x_\ell-\bar{x}_\ell)^T E(\bar{x}_\ell)
}/{
\|E(\bar{x}_\ell)\|
}$. 
Therefore, the condition \eqref{e.Fejer} holds, so $\{x_\ell\}_{\ell\geq 0}$ is Fej\'er monotone with respect to $X^*$.
Hence, by induction $\|x_\ell-x^*\|
\le
\|x_0-x^*\|$ holds. Thus, $\{x_\ell\}_{\ell\geq 0}$ is bounded. From Lipschitz continuity of $E$,
\[
\|E(x_\ell)\|
=
\|E(x_\ell)-E(x^*)\|
\le
L\|x_\ell-x^*\|
\le
L\|x_0-x^*\|,
\]
so $\{E(x_\ell)\}_{\ell\geq 0}$ is bounded. By construction, $\{\mu_\ell\}_{\ell\geq 0}$ is bounded. Moreover, by \eqref{e:sccon}, $\|p_\ell^{\scal}\|
\le
\|E(x_\ell)\|$, 
and hence $\{p_\ell^{\scal}\}_{\ell\geq 0}$ is bounded. It follows directly
that $\{\bar{x}_\ell\}_{\ell\geq 0}$ is bounded from \eqref{e.xbar} with $p_\ell=p^{\scal}_\ell$. Since $E$ is
Lipschitz continuous,
the sequence $\{E(\bar{x}_\ell)\}_{\ell\geq 0}$ is also bounded.

(ii-1) If Algorithm \ref{a.FDS} terminates in finitely many iterations and $\|E(x_\ell)\|\le\eps$, then the last iterate $x_\ell$ is an $\eps$-approximate solution of
\eqref{e.ME}. In particular, if $E(x_\ell)=0$, then $x_\ell$ is a solution
of \eqref{e.ME}. If termination occurs solely because {\tt nRmax} is
reached, the algorithm stops because the prescribed residual-evaluation
budget has been exhausted, and no approximate-solution conclusion follows
unless the residual criterion is also satisfied.

(ii-2)
Since $x_\ell\neq\bar{x}_\ell$, at every nonstationary iteration, the sufficient descent inequality holds. In both cases of line search and fixed step sizes, the parameter $\rho>0$ is independent of $\ell$ and
satisfies
\begin{equation}\label{e.th2}
(x_\ell-\bar{x}_\ell)^T E(\bar{x}_\ell)
\ge
\rho \|x_\ell-\bar{x}_\ell\|^2.
\end{equation}
Indeed, in the line-search step-size case,  it was obtained from $x_\ell-\bar{x}_\ell
=
-\mu_\ell p_\ell^{\scal}$ and \gzit{e.rbls}, while it followed from Lemma~\ref{le:fixed-descent-modified} in the fixed step-size case,
provided that $\alpha$ satisfies \eqref{e.alpha_bound}. Applying \eqref{e.th2} to \eqref{e.fejer_uni} yields
\begin{equation}\label{e.th55}
\|x_{\ell+1}-x^*\|^2
\le
\|x_\ell-x^*\|^2
-
\rho^2
\frac{\|x_\ell-\bar{x}_\ell\|^4}
{\|E(\bar{x}_\ell)\|^2}.
\end{equation}
Summing the Fej\'er decrease inequality \eqref{e.th55} from $\ell=0$ to $N-1$ yields
\[
\sum_{\ell=0}^{N-1}
\rho^2
\frac{\|x_\ell-\bar{x}_\ell\|^4}
{\|E(\bar{x}_\ell)\|^2}
\le
\sum_{\ell=0}^{N-1}
\Big(
\|x_\ell-x^*\|^2
-
\|x_{\ell+1}-x^*\|^2
\Big)=
\|x_0-x^*\|^2
-
\|x_N-x^*\|^2,
\]
which gives
\begin{equation}\label{e.sum66}
\sum_{\ell=0}^{N-1}
\rho^2
\frac{\|x_\ell-\bar{x}_\ell\|^4}
{\|E(\bar{x}_\ell)\|^2}
\le
\|x_0-x^*\|^2
\end{equation}
since $\|x_N-x^*\|^2 \ge 0$. 
By part (i), 
$\{E(\bar{x}_\ell)\}_{\ell\geq 0}$ is bounded.
Hence, there exists a constant $C>0$ such that
\begin{equation}\label{e.bdC}
\|E(\bar{x}_\ell)\| \le C,
\qquad \forall \ell\ge0.
\end{equation}
From \eqref{e.bdC},  ${\|x_\ell-\bar{x}_\ell\|^4}/
{\|E(\bar{x}_\ell)\|^2}
\ge \|x_\ell-\bar{x}_\ell\|^4/C^2$
is obtained. Substituting this inequality into \eqref{e.sum66} and letting $N\to\infty$ show that the first condition in \eqref{e.sum2diffx} holds, namely, $\sum_{\ell=0}^{\infty}
\|x_\ell-\bar{x}_\ell\|^4
<\infty$.
Consequently, the first condition in \eqref{e.th0} holds, that is, $\|x_\ell-\bar{x}_\ell\|\to 0 $. Finally, using \eqref{eq:fejer-descent} and
summing this inequality from $\ell=0$ to $N-1$ yields
\[
\sum_{\ell=0}^{N-1}
\|x_{\ell+1}-x_\ell\|^2
\le
\|x_0-x^*\|^2
-
\|x_N-x^*\|^2
\le
\|x_0-x^*\|^2.
\]
Letting $N\to\infty$, the second condition in \eqref{e.sum2diffx} is obtained, namely $\sum_{\ell=0}^{\infty}
\|x_{\ell+1}-x_\ell\|^2
<\infty$, so that the second condition in \eqref{e.th0}, namely 
$
\|x_{\ell+1}-x_\ell\|\to 0
$, is obtained. \qed
\end{proof}
The following theorem shows that the sequence $\{x_\ell\}_{\ell\geq0}$
generated by Algorithm \ref{a.FDS} is globally convergent to a solution $x^*$ of the
system, i.e., $E(x^*)=0$.

\begin{thm}[Global convergence of Algorithm \ref{a.FDS}]\label{th:gl}
Let $\{x_\ell\}_{\ell\geq0}$ be an infinite sequence generated by  Algorithm \ref{a.FDS}
and suppose that {\rm (\Asol)} and {\rm (\Alip)} hold. Then
\begin{equation}\label{e.infz}
\liminf_{\ell\rightarrow\infty}\|E(x_\ell)\|=0.
\end{equation}
Moreover, the sequence $\{x_\ell\}_{\ell\geq0}$ converges to some
$x^*\in X^*$.
\end{thm}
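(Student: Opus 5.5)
We first prove \eqref{e.infz} from Lemma~\ref{le:bo}(ii-2) together with a uniform lower bound on the accepted step sizes, and then upgrade it to convergence of the whole sequence using Fej\'er monotonicity.

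\textbf{Step 1: lower bound on the step sizes.} In the line-search realization ({\sc S1$_a$}), every accepted step satisfies $\mu_\ell\ge\mu_{\min}>0$, since the backtracking never goes below $\mu_{\min}$ and, by Proposition~\ref{pr0}, it accepts no later than $\mu_{\min}$. In the fixed-step realization ({\sc S1$_b$}) we have $\mu_\ell=\alpha>0$. So in both cases $\mu_\ell\ge\underline\mu:=\min\{\mu_{\min},\alpha\}>0$ for all $\ell$.

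\textbf{Step 2: proof of \eqref{e.infz}.} By \eqref{e.xbar} with $p_\ell=p_\ell^{\scal}$ and the lower half of \eqref{e:sccon},
\[
\|x_\ell-\bar x_\ell\|=\mu_\ell\|p_\ell^{\scal}\|\ge\underline\mu\,\zeta\,\|E(x_\ell)\|.
\]
Lemma~\ref{le:bo}(ii-2) gives $\|x_\ell-\bar x_\ell\|\to0$, so in fact $\|E(x_\ell)\|\to0$, which is stronger than \eqref{e.infz}. We can even quantify this: \eqref{e.sum2diffx} yields $\sum_\ell\|E(x_\ell)\|^4<\infty$.

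\textbf{Step 3: convergence of the iterates.} By Lemma~\ref{le:bo}(i) the sequence $\{x_\ell\}$ is bounded, so some subsequence $x_{\ell_j}$ converges to a point $\hat x$. Since $E$ is Lipschitz, hence continuous, and $\|E(x_{\ell_j})\|\to0$, we get $E(\hat x)=0$, that is, $\hat x\in X^*$. Applying the Fej\'er inequality \eqref{e.Fejer} with $x^*=\hat x$ shows that $\|x_\ell-\hat x\|$ is nonincreasing. Its limit equals the limit along the subsequence, which is $0$. Hence the whole sequence converges to $x^*:=\hat x\in X^*$.

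\textbf{Main obstacle.} The delicate point is Step~1, namely making sure the step sizes do not degenerate. This is exactly what the safeguard $\mu_{\min}$ in \eqref{eq:mu-min-bound}, together with Proposition~\ref{pr0}, guarantees. If one worked with the weaker conclusion $\liminf$ only, one would pass to a subsequence along which $\|E(x_{\ell})\|$ realizes the $\liminf$; the same Fej\'er argument would still go through. The remaining steps are routine.
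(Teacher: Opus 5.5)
Your proof is correct, but it reaches \eqref{e.infz} by a more direct route than the paper.

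The paper argues by contradiction. Assuming $\|E(x_\ell)\|\ge\eps$ for all large $\ell$, it bounds the numerator $(x_\ell-\bar x_\ell)^TE(\bar x_\ell)$ below by $\rho\mu_{\min}^2\zeta^2\eps^2$ (respectively $\rho\alpha^2\zeta^2\eps^2$). It then combines this with $\|E(\bar x_\ell)\|\le C$ in \eqref{e.fejer_uni}, which gives a fixed decrease $\delta^2/C^2$ of $\|x_\ell-x^*\|^2$ per iteration and hence a contradiction. This yields only the $\liminf$, so for the second part the paper must first choose a subsequence with $\|E(x_{\ell_j})\|\to0$ and then extract a convergent one.

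You instead reuse the already established conclusion $\|x_\ell-\bar x_\ell\|\to0$ from Lemma~\ref{le:bo}(ii-2). You pair it with the linear lower bound $\|x_\ell-\bar x_\ell\|\ge\mu_{\min}\zeta\|E(x_\ell)\|$ (resp.\ $\alpha\zeta\|E(x_\ell)\|$), which the paper only isolates later as Lemma~\ref{le:lowerstep}. This is shorter and avoids the contradiction. It also gives the stronger statements $\|E(x_\ell)\|\to0$ and $\sum_\ell\|E(x_\ell)\|^4<\infty$, so in your Step~3 any convergent subsequence works.

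The paper's argument is self-contained in that it needs only the Fej\'er inequality \eqref{e.fejer_uni}, not part (ii-2), but it pays with the weaker conclusion. The full limit appears in the paper only in Theorem~\ref{thm:sublinear_rate}, in the sharper form $\sum_\ell\|E(x_\ell)\|^2<\infty$. That sharper form comes from the relative bound $\|E(\bar x_\ell)\|\le(1+L\mu_{\init})\|E(x_\ell)\|$ of Corollary~\ref{cor:step-lower-strong}, used in place of the uniform constant $C$. The convergence of the whole sequence via Fej\'er monotonicity is the same in both proofs.

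A harmless remark: only one step-size realization is active in a given run, so your lower step bound is simply $\mu_{\min}$ or $\alpha$, and taking $\min\{\mu_{\min},\alpha\}$ is unnecessary.
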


\begin{proof}  
We argue by contradiction and assume that \eqref{e.infz} does not hold.
Then there exist constants $\eps>0$ and $\ell_0\ge0$ such that
\begin{equation}\label{contra_eps}
\|E(x_\ell)\|\ge\eps
\qquad
\text{for all }\ell\ge\ell_0.
\end{equation}

Under this assumption, the scaling condition \eqref{e:sccon} yields
\begin{equation}\label{e.plobd}
\|p_\ell^{\scal}\|
\ge
\zeta \|E(x_\ell)\|
\ge
\zeta\eps
=: c_0>0,
\end{equation}
for all $\ell\ge\ell_0$, where $c_0$ is independent of $\ell$. By Lemma~\ref{le:bo}(i), we have the Fej\'er monotonicity property \gzit{e.fejer_uni}, namely,
\[
\|x_{\ell+1}-x^*\|^2 
\le 
\|x_\ell-x^*\|^2 
- {\big((x_\ell-\bar x_\ell)^T E(\bar x_\ell)\big)^2}/
{\|E(\bar x_\ell)\|^2},
\qquad
x^*\in X^*.
\]

We now show that the numerator is bounded away from zero in both cases:

{\bf Line search step-size case.}  Since the accepted step size satisfies $\mu_\ell\ge\mu_{\min}>0$, using \gzit{e.rbls} and \eqref{e.plobd}, $(x_\ell-\bar x_\ell)^T E(\bar x_\ell)
\ge
\rho \mu_{\min}^2 \|p_\ell^{\scal}\|^2
\ge
\rho \mu_{\min}^2 c_0^2
=: \delta_{\rm var} > 0$.

{\bf Fixed step-size case.} 
By Lemma~\ref{le:fixed-descent-modified}, with
$\mu_\ell = \alpha$,  $(p_\ell^{\scal})^T E(\bar x_\ell)
\le
-\rho\alpha\|p_\ell^{\scal}\|^2$. Since $x_\ell-\bar x_\ell
=
-\alpha p_\ell^{\scal}$, from \eqref{e.rbls} and \eqref{e.plobd}, it follows that
\[
\begin{aligned}
(x_\ell-\bar x_\ell)^T E(\bar x_\ell)=
-\alpha
(p_\ell^{\scal})^T E(\bar x_\ell)
\ge
\rho\alpha^2\|p_\ell^{\scal}\|^2
\ge
\rho\alpha^2c_0^2
=: \delta_{\rm fix} > 0.
\end{aligned}
\]
Let $\delta:=\delta_{\rm var}$ in the line-search step-size case and
$\delta:=\delta_{\rm fix}$ in the fixed step-size case. In both cases, for all $\ell\ge\ell_0$, the numerator in \gzit{e.fejer_uni} is at least $\delta > 0$. 

By Lemma~\ref{le:bo}(i), the sequence
$\{E(\bar x_\ell)\}_{\ell\geq0}$ is bounded, i.e., there exists a constant
$C>0$ such that $\|E(\bar x_\ell)\| \le C$ for all $\ell\ge0$. Substituting these estimates into \gzit{e.fejer_uni} yields
$\|x_{\ell+1}-x^*\|^2 
\le 
\|x_\ell-x^*\|^2 
- {\delta^2}/{C^2}$. Let $N>\ell_0$ be an integer. For $\ell=\ell_0,\ldots,N-1$, recursively applying the preceding inequality yields
\[
0
\le
\|x_N-x^*\|^2
\le
\|x_{\ell_0}-x^*\|^2
-
(N-\ell_0){\delta^2}/{C^2},
\]
which leads to a contradiction as $N\to\infty$. Hence, assumption \gzit{contra_eps} is false and \eqref{e.infz} holds.

It remains to prove convergence of the entire sequence. By \eqref{e.infz}, there exists a subsequence
\(\{x_{\ell_j}\}_{j\geq0}\) such that $\|E(x_{\ell_j})\|\to0$. By Lemma~\ref{le:bo}(i), the sequence \(\{x_{\ell_j}\}_{j\geq0}\) is
bounded. Hence, by the
Bolzano-Weierstrass theorem, it admits a convergent subsequence.
Passing to this subsequence, which we continue to denote by
$\{x_{\ell_j}\}_{j\ge0}$, there exists \(\hat x\in\mathbb{R}^n\) such that $x_{\ell_j}\to\hat x$.

The Lipschitz continuity of $E$ implies its continuity, and hence $E(\hat x)
=
\lim_{j\to\infty}E(x_{\ell_j})
=
0$. Therefore, $\hat x\in X^*$. By the Fej\'er monotonicity property \eqref{e.Fejer}, the sequence
$\{\|x_\ell-\hat x\|\}_{\ell\geq0}$ is nonincreasing and therefore
convergent. Since $x_{\ell_j}\to\hat x$, we have $\lim_{j\to\infty}\|x_{\ell_j}-\hat x\|=0$. Consequently, $\lim_{\ell\to\infty}\|x_\ell-\hat x\|=0$, and hence $x_\ell\to\hat x\in X^*$. This completes the proof. \qed
\end{proof}

\section{Required Assumptions for Convergence Rates}

To analyze the convergence rates and iteration complexity of Algorithm \ref{a.FDS}, we impose a standard local error-bound assumption on the operator $E$. The rate analysis requires this property only in a neighborhood of the solution set.

\medskip
\noindent
(\Aeb) \textbf{Local Error Bound.}
There exist constants $\vartheta>0$ and $\delta>0$ such that, for any 
$x \in \mathcal N_\delta(X^*):= \{ x \in \Rz^n : \dist(x,X^*) < \delta \}$,
\begin{equation}\label{e.EB}
\vartheta\,\dist(x,X^*) \le \|E(x)\|.
\end{equation}
Assumption {\rm (\Aeb)} ensures that, locally around the solution set, the residual norm provides a reliable quantitative measure of the distance to $X^*$. 
This assumption is a metric-subregularity condition for the equation $E(x)=0$ and is satisfied by several important classes of nonlinear equations and variational problems under suitable regularity conditions.

(\Areg) \textbf{Local Jacobian Regularity.}
The operator $E$ is continuously differentiable on an open neighborhood
of $x^*\in X^*$, and $\sigma_{\min}(J_E(x^*))>0$, where $\sigma_{\min}(\cdot)$ denotes the smallest singular value.

\textbf{Structural sufficient condition for (\Aeb).}
The local error bound is a structural property of the operator and does
not depend on the algorithm. Assumption {\rm (\Areg)} is a classical
sufficient condition for a local error bound around the particular
solution $x^*$. Indeed, the nonsingularity of $J_E(x^*)$, together with
the continuous differentiability of $E$, implies that $x^*$ is locally
isolated and that there exist constants $\cc>0$ and $\delta_*>0$ such that $\|x-x^*\|
\le
\cc\|E(x)\|$ for all $x\in B_{\delta_*}(x^*)$. Here, $B_\delta(x^*)$ denotes the open ball centered at a particular solution
$x^*\in X^*$, whereas $\mathcal N_\delta(X^*)$ denotes a neighborhood of
the entire solution set $X^*$. Since $x^*$ is the unique solution in a sufficiently small neighborhood, we have $\dist(x,X^*)=\|x-x^*\|$ for all $x$ sufficiently close to $x^*$. Therefore, after possibly
reducing the neighborhood and setting $\vartheta:=1/\cc$, the estimate $\vartheta\,\dist(x,X^*)
\le
\|E(x)\|$ 
holds locally around $x^*$. This pointwise sufficient condition does not, by itself, imply that the
same constants $\vartheta$ and $\delta$ are valid uniformly on a neighborhood of the entire solution set $X^*$.

\begin{thm}[Sufficient condition for a local error bound]
\label{thm:structural-eb}
Suppose that $E$ is continuously differentiable on an open neighborhood
of $x^*\in X^*$ and that the Jacobian $J_E(x^*)$ is nonsingular. Then
there exist constants $\vartheta>0$ and $\delta>0$ such that \begin{equation}\label{e.localEB}
\vartheta\,\dist(x,X^*)
\le
\|E(x)\|,
\qquad
\forall x\in B_\delta(x^*).
\end{equation}
\end{thm}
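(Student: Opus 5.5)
The plan is to obtain the pointwise estimate $\|x-x^*\|\le \cc\|E(x)\|$ near $x^*$ from a first-order Taylor expansion, and then turn it into the distance form \eqref{e.localEB}. Set $J:=J_E(x^*)$, which is nonsingular, so $\|Jh\|\ge\sigma_{\min}(J)\|h\|$ for every $h\in\Rz^n$, with $\sigma_{\min}(J)>0$. Since $E$ is $C^1$ on an open neighborhood of $x^*$, we can pick $r>0$ with $B_r(x^*)$ inside that neighborhood and
\[
\|J_E(z)-J\|\le \tfrac12\sigma_{\min}(J)\qquad\text{for all } z\in B_r(x^*).
\]
For $x\in B_r(x^*)$ the ball is convex, so the mean-value theorem in integral form, together with $E(x^*)=0$, gives
\[
E(x)=\int_0^1 J_E\bigl(x^*+\tau(x-x^*)\bigr)(x-x^*)\,d\tau .
\]

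Writing $E(x)=J(x-x^*)+\int_0^1\bigl(J_E(x^*+\tau(x-x^*))-J\bigr)(x-x^*)\,d\tau$ and using the bound on $\|J_E(z)-J\|$, I get
\[
\|E(x)\|\ge \sigma_{\min}(J)\|x-x^*\|-\tfrac12\sigma_{\min}(J)\|x-x^*\| =\tfrac12\sigma_{\min}(J)\|x-x^*\|.
\]
This gives the pointwise estimate with $\cc=2/\sigma_{\min}(J)$. It also shows that $x^*$ is the only zero of $E$ in $B_r(x^*)$, which is the local isolation claimed in the text.

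To pass to $\dist(x,X^*)$, the simplest route avoids needing equality: $x^*\in X^*$ always gives $\dist(x,X^*)\le\|x-x^*\|$. Hence for every $x\in B_r(x^*)$,
\[
\vartheta\,\dist(x,X^*)\le\vartheta\|x-x^*\|\le\|E(x)\|,\qquad \vartheta:=\tfrac12\sigma_{\min}(J),
\]
and \eqref{e.localEB} follows with $\delta:=r$. No shrinking of the neighborhood is needed for the inequality itself. If one also wants $\dist(x,X^*)=\|x-x^*\|$, as remarked in the text, I would take $\delta\le r/2$: then any solution other than $x^*$ lies outside $B_r(x^*)$ and so is at distance at least $r/2$ from $x$, while $\|x-x^*\|<r/2$.

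The only real obstacle is choosing $r$ correctly. It must lie within the $C^1$ neighborhood, and the Jacobian bound must hold uniformly on the whole segment $[x^*,x]$ so that the integral form applies. Both are guaranteed by continuity of $J_E$ at $x^*$ and convexity of the ball.
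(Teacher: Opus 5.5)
Your proof is correct, and it differs from the paper's in two places.

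The first is the lower bound $\|E(x)\|\ge\tfrac12\sigma_{\min}(J_E(x^*))\|x-x^*\|$. The paper uses only the expansion $E(x)=J_E(x^*)(x-x^*)+h(x)$ with $\|h(x)\|=o(\|x-x^*\|)$, that is, Fr\'echet differentiability at $x^*$ alone. You instead use continuity of $J_E$ on a ball together with the integral mean-value formula. Both are valid under the stated $C^1$ hypothesis. The paper's version needs slightly less; yours ties the radius explicitly to the modulus of continuity of $J_E$.

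The more substantive difference is the passage to $\dist(x,X^*)$. The paper invokes the Inverse Function Theorem to get local isolation of $x^*$. It then shows $\dist(x,X^*)=\|x-x^*\|$ on $B_{r/3}(x^*)$ before setting $\vartheta=c_0/2$. You observe that $\dist(x,X^*)\le\|x-x^*\|$ holds trivially because $x^*\in X^*$. So the stated inequality follows at once on all of $B_r(x^*)$, with no isolation argument and no shrinking of the neighborhood. This is shorter and shows that isolation is not needed for the theorem as stated.

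You also note correctly that isolation follows directly from the lower bound. This makes the Inverse Function Theorem unnecessary, and the same is true of the paper's own estimate.

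What the paper's extra step buys is the identity $\dist(x,X^*)=\|x-x^*\|$ near $x^*$. Theorem~\ref{thm:JFS_inertial_rate} uses exactly this identity to turn the contraction of $d_\ell$ into $R$-linear convergence of $x_\ell$ to $x^*$. Your closing remark with $\delta\le r/2$ recovers it as well.
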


\begin{proof}
Since $E$ is continuously differentiable near $x^*\in X^*$, we write
\[
E(x) = E(x^*) + J_E(x^*)(x-x^*) + h(x)
= J_E(x^*)(x-x^*) + h(x),
\]
where
\begin{equation}\label{e.res}
\|h(x)\|
=
o(\|x-x^*\|)
\qquad
\text{as }x\to x^*.
\end{equation}
Let $c_0 := \sigma_{\min}(J_E(x^*))>0$ denote the smallest singular value. 
The definition of the smallest singular value of a matrix, $\sigma_{\min}(A)=\min\{\|Av\|\mid \|v\|=1\}$, implies that $\|Av\|\ge \sigma_{\min}(A)\|v\|$ for all $v$. Applying this with $A=J_E(x^*)$ and $v=x-x^*$ yields $\|J_E(x^*)(x-x^*)\|
\ge c_0 \|x-x^*\|$. By \eqref{e.res}, there exists a neighborhood of $x^*$ such that for all $x$ in it, $\|h(x)\| \le ({c_0}/{2})\|x-x^*\|$. Therefore,
\[
\|E(x)\| 
\ge \|J_E(x^*)(x-x^*)\|-\|h(x)\|
\ge c_0 \|x-x^*\| - \frac{c_0}{2}\|x-x^*\|
= \frac{c_0}{2}\|x-x^*\|.
\]
By the Inverse Function Theorem, the nonsingularity of $J_E(x^*)$
implies that $E$ is locally invertible around $x^*$. So, $x^*$ is an isolated (or locally unique) solution of \eqref{e.ME}.

Choose \(r>0\) such that \(X^*\cap B_r(x^*)=\{x^*\}\). Then, for every
\(x\in B_{r/3}(x^*)\) and \(z\in X^*\setminus\{x^*\}\),
\[
\|x-z\|
\ge
\|z-x^*\|-\|x-x^*\|
>
r-r/3={2r}/{3}
>
\|x-x^*\|.
\]
Hence, \(x^*\) is the unique closest point in \(X^*\) to \(x\), and so $\dist(x,X^*)=\|x-x^*\|$.  Thus, after reducing the neighborhood if necessary,  setting $\vartheta:=c_0/2$
proves the local error bound \eqref{e.localEB}, i.e., $\|E(x)\|
\ge
\vartheta\|x-x^*\|
=
\vartheta\dist(x,X^*)$. \qed
\end{proof}

\paragraph{\bfi{Relation to Strong Monotonicity and Jacobian Regularity.}}

Let $E$ be continuously differentiable and monotone on an open neighborhood of $x^*$. Then the symmetric part of the Jacobian at any point $x^*$ satisfies ${(J_E(x^*) + J_E(x^*)^\top)}/{2} \succeq 0$. Indeed, using the monotonicity condition \eqref{e.monotone}, fix
$v\in\mathbb{R}^n$, set $x=x^*+tv$ and $y=x^*$, then $\langle E(x^*+tv)-E(x^*),tv\rangle \ge 0$. For every $t\neq0$, dividing by $t^2$ gives $\left\langle{(E(x^*+tv)-E(x^*))}/{t},
v
\right\rangle
\ge0$. Since $E$ is differentiable at $x^*$,
$t^{-1}(E(x^*+tv)-E(x^*))
\to
J_E(x^*)v$ as  $t\to0$. Passing to the limit yields $\langle J_E(x^*)v,v\rangle \ge 0$ for all $v\in\mathbb{R}^n$. Since the skew--symmetric part ${(A-A^\top)}/{2}$ of a matrix $A$
satisfies $v^\top(A-A^\top)v=0$ for all $v$, we have $\langle Av,v\rangle
=
\left\langle
{(A+A^\top)v}/{2},
v
\right\rangle$. Applying this identity with $A=J_E(x^*)$ gives
$\left\langle
{(J_E(x^*) + J_E(x^*)^\top)v}/{2},
v
\right\rangle
=
\langle J_E(x^*)v,v\rangle
\ge 0$ for all $v$. This is precisely the positive semidefiniteness of the symmetric part of
$J_E(x^*)$. If, in addition, ${(J_E(x^*) + J_E(x^*)^\top})/{2} \succ 0$, since the smallest eigenvalue is positive, $\mu_*>0$, so $\left\langle
{(J_E(x^*) + J_E(x^*)^\top)v}/{2},
v
\right\rangle
\ge
\mu_*\|v\|^2$ for all $v$. Using the identity above, this is equivalent to $\langle J_E(x^*)v,v\rangle
\ge
\mu_*\|v\|^2$ for all $v$. Moreover, by continuity of $J_E$, there exist constants $\mu>0$ and
$\delta>0$ such that ${(J_E(x)+J_E(x)^\top)}/{2}
\succeq
\mu I$ for all $x\in B_\delta(x^*)$. Since $B_\delta(x^*)$ is convex, the segment joining any
$x,y\in B_\delta(x^*)$ is contained in $B_\delta(x^*)$, and the mean-value integral formula gives
\[
\langle E(x)-E(y),x-y\rangle
\ge
\mu\|x-y\|^2,
\qquad
\forall x,y\in B_\delta(x^*).
\]
Consequently, $E$ is strongly monotone on this neighborhood. This pointwise Jacobian condition is weaker than global strong
monotonicity, which requires the uniform inequality $\langle E(x)-E(y), x-y\rangle 
\ge \mu \|x-y\|^2$ for all $x,y \in \Rz^n$ and some constant $\mu>0$. The relationship between these concepts is classical in monotone
operator theory; see, for instance,
\cite[Proposition~22.9]{Bauschke2017}.

\section{Convergence rate and complexity}\label{s.convRate}

This section investigates the convergence rates and complexity of
Algorithm \ref{a.FDS}. We first establish auxiliary estimates connecting the
projection-step length to the residual norm. These estimates are then used
to derive a global best-iterate residual rate, a sharper asymptotic rate for
the distance to the solution set under local regularity, and local
\(R\)-linear convergence under Jacobian regularity. Finally, we derive the
corresponding iteration and residual-evaluation complexity bounds.

\subsection{Auxiliary results for convergence-rate analysis}

While the unscaled subspace directions $p_\ell$ generated by the CG-type recursion \eqref{e.pcg1} are not required to be bounded, the algorithm employs the scaled directions $p_\ell^{\mathrm{scal}}$, defined by \eqref{e.scaleDir}, satisfying the scaling condition \eqref{e:sccon}. 
This scaling is essential for ensuring both global convergence and the validity of the convergence-rate analysis.

\begin{lem}[Lower bound on the trial step]\label{le:lowerstep}
Suppose that Assumptions {\rm (\Asol)}--{\rm (\Alip)} hold and that Algorithm \ref{a.FDS} generates an infinite sequence
$\{x_\ell\}_{\ell\ge0}$. Let the trial point $\bar x_\ell$ be defined by
\eqref{e.xbar}, that is, $\bar x_\ell
=
x_\ell+\mu_\ell p_\ell^{\scal}$, where $p_\ell^{\scal}$ is the rescaled direction satisfying the enforced
angle condition \eqref{e.rac} and the norm condition \eqref{e:sccon}. Let
$\mu_\ell>0$ be the step size obtained from the line search
\eqref{e.bls}. Given the known Lipschitz constant $L>0$ and the constants $\Delta_{\angle},\rho\in(0,1)$ and $\gamma>1$, we choose the
algorithmic safeguard parameter as \eqref{e.mu-min}, that is $\mu_{\min}
:=
\min\left\{
\mu_{\init},{\Delta_{\angle}}
/{\gamma(L+\rho)}
\right\}$. Then there exists a constant $\kappa>0$ such that
\begin{equation}\label{e.lowerstep}
\|x_\ell-\bar x_\ell\|
\ge
\kappa\,\|E(x_\ell)\|,
\qquad
\text{for all }\ell\ge0.
\end{equation}
\end{lem}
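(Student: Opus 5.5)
The plan is to exploit the identity $x_\ell-\bar x_\ell=-\mu_\ell p_\ell^{\scal}$ from \eqref{e.xbar}. It gives
\[
\|x_\ell-\bar x_\ell\|=\mu_\ell\|p_\ell^{\scal}\|.
\]
This reduces \eqref{e.lowerstep} to two separate uniform lower bounds: one on the accepted step size $\mu_\ell$, and one on the ratio $\|p_\ell^{\scal}\|/\|E(x_\ell)\|$. The second bound is immediate from the scaling condition \eqref{e:sccon}, which yields $\|p_\ell^{\scal}\|\ge\zeta\|E(x_\ell)\|$ at every iteration. So the whole argument rests on bounding $\mu_\ell$ from below independently of $\ell$.

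For the step size we treat the two realizations separately.

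In the fixed-step case \textsc{S1$_b$}, we have $\mu_\ell=\alpha>0$ with $\alpha$ fixed by \eqref{e.alpha_bound}. Hence $\kappa=\zeta\alpha$ works.

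In the line-search case \textsc{S1$_a$}, the backtracking rule $\mu\leftarrow\max\{\mu/\gamma,\mu_{\min}\}$ started from $\mu_{\init}$ never produces a value below $\mu_{\min}$. Hence $\mu_\ell\in[\mu_{\min},\mu_{\init}]$ automatically. Proposition~\ref{pr0} guarantees that the procedure terminates with such an accepted step; it applies because $\mu_{\min}$ chosen by \eqref{e.mu-min} satisfies \eqref{eq:mu-min-bound}, using $\gamma>1$. Consequently $\kappa=\zeta\mu_{\min}$ with $\mu_{\min}=\min\{\mu_{\init},\Delta_{\angle}/(\gamma(L+\rho))\}>0$. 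To cover both realizations simultaneously, I would set $\kappa:=\zeta\min\{\mu_{\min},\alpha\}$.

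A sharper, more classical argument is also available and explains the factor $\gamma$ in \eqref{e.mu-min}. Suppose the accepted step satisfies $\mu_\ell<\mu_{\init}$. Then the previous trial $\gamma\mu_\ell$ failed \eqref{e.bls}. By Lemma~\ref{lem:lipschitz-direction}, \eqref{e.bls} holds for every $\mu\le\Delta_{\angle}/(L+\rho)$. Therefore $\gamma\mu_\ell>\Delta_{\angle}/(L+\rho)$, i.e.\ $\mu_\ell>\Delta_{\angle}/(\gamma(L+\rho))$. In either case $\mu_\ell\ge\mu_{\min}$. I would mention this as a remark but rely on the safeguard, since it is simpler.

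Finally, the stationary case $E(x_\ell)=0$ needs no separate treatment, since there \eqref{e.lowerstep} is trivial. The only real point needing care is the line-search step bound: one must check that the safeguarded backtracking cannot go below $\mu_{\min}$ and still terminates. Proposition~\ref{pr0} settles this, so I do not expect any genuine obstacle.
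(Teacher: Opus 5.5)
Your proposal is correct and follows essentially the same route as the paper. Both write $\|x_\ell-\bar x_\ell\|=\mu_\ell\|p_\ell^{\scal}\|$, use the safeguarded backtracking (well defined via Lemma~\ref{lem:lipschitz-direction}, since $\mu_{\min}\le\Delta_{\angle}/(L+\rho)$) to get $\mu_\ell\ge\mu_{\min}$, and use the lower scaling bound in \eqref{e:sccon} to conclude with $\kappa=\zeta\mu_{\min}$; your extra fixed-step case with $\kappa=\zeta\alpha$ goes slightly beyond the statement but is harmless.
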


\begin{proof}
By Lemma \ref{lem:lipschitz-direction}, $(p_\ell^{\scal})^T E(\bar x_\ell)
\le
-(\Delta_{\angle}-L\mu)
\|p_\ell^{\scal}\|^2$. Hence, the line-search condition \eqref{e.bls} is satisfied whenever
\[
-(\Delta_{\angle}-L\mu)
\|p_\ell^{\scal}\|^2
\le
-\rho\mu\|p_\ell^{\scal}\|^2.
\]
Equivalently, it is sufficient that $\mu
\le {\Delta_{\angle}}/{(L+\rho)}
=:
\bar\mu$. Since $L$ is known and $\gamma>1$, we choose the minimal allowable step size
algorithmically as \eqref{e.mu-min}. Hence, the backtracking procedure is well-defined, and every accepted
step size satisfies $\mu_\ell
\ge
\mu_{\min}$, for all $\ell\ge0$. Using the lower scaling condition in \eqref{e:sccon}, we obtain $\|x_\ell-\bar x_\ell\|
=
\mu_\ell\|p_\ell^{\scal}\|
\ge
\mu_{\min}\zeta\|E(x_\ell)\|$. Setting $\kappa
:=
\mu_{\min}\zeta>0$ completes the proof. \qed
\end{proof}

\begin{cor}[Sufficient descent of the projection step]
\label{cor:step-lower-strong}
Suppose that Assumptions {\rm (\Asol)}--{\rm (\Alip)} hold and that
Algorithm \ref{a.FDS} generates an infinite sequence using either the line-search step-size implementation covered by Lemma~\ref{le:lowerstep} or the fixed step-size implementation with $\mu_\ell=\alpha$ satisfying
\eqref{e.alpha_bound}. Then, the projection update satisfies
\begin{equation}\label{eq:step-lower-strong}
\|x_{\ell+1}-x_\ell\|
\ge
c\,\|E(x_\ell)\|,
\end{equation}
where $c>0$ is independent of $\ell$. More precisely,
\[
c
=
\begin{cases}
\displaystyle
\frac{
\rho\mu_{\min}^2\zeta^2
}{
1+L\mu_{\init}
},
&
\text{for the line-search step-size case},
\\[4mm]
\displaystyle
\frac{
\rho\alpha^2\zeta^2
}{
1+L\alpha
},
&
\text{for the fixed step-size case}.
\end{cases}
\]
\end{cor}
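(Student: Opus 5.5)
We work from the explicit projection formula \eqref{e.proj}. It gives
\[
\|x_{\ell+1}-x_\ell\|=\lambda_\ell\|E(\bar x_\ell)\|=\frac{(x_\ell-\bar x_\ell)^TE(\bar x_\ell)}{\|E(\bar x_\ell)\|}.
\]
This is the distance from $x_\ell$ to $\ol H_\ell$, as already used in the proof of Lemma~\ref{le:bo}(i). The plan is therefore to bound the numerator from below and the denominator from above, each by a multiple of $\|E(x_\ell)\|$ or of its square. At a nonstationary iteration we have $E(x_\ell)\neq0$ and $E(\bar x_\ell)\neq0$, so the quotient is well defined. If $E(x_\ell)=0$, the claim is trivial.

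\textbf{Numerator.} By \eqref{e.rbls}, applied with $p_\ell=p_\ell^{\scal}$, we get $(x_\ell-\bar x_\ell)^TE(\bar x_\ell)\ge\rho\mu_\ell^2\|p_\ell^{\scal}\|^2$. In the fixed-step case, $\mu_\ell=\alpha$ and \eqref{e.bls} holds by Lemma~\ref{le:fixed-descent-modified}. In the line-search case, the proof of Lemma~\ref{le:lowerstep} gives $\mu_\ell\ge\mu_{\min}$. Combining either bound with the lower scaling bound in \eqref{e:sccon} yields
\[
(x_\ell-\bar x_\ell)^TE(\bar x_\ell)\ge\rho\bar\mu^2\zeta^2\|E(x_\ell)\|^2,
\]
where $\bar\mu=\mu_{\min}$ or $\bar\mu=\alpha$ according to the case.

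\textbf{Denominator.} By the triangle inequality and (\Alip),
\[
\|E(\bar x_\ell)\|\le\|E(x_\ell)\|+L\|\bar x_\ell-x_\ell\|=\|E(x_\ell)\|+L\mu_\ell\|p_\ell^{\scal}\|.
\]
The upper scaling bound in \eqref{e:sccon} gives $\|p_\ell^{\scal}\|\le\|E(x_\ell)\|$. Every accepted line-search step satisfies $\mu_\ell\le\mu_{\init}$, and in the fixed case $\mu_\ell=\alpha$. Hence $\|E(\bar x_\ell)\|\le(1+L\hat\mu)\|E(x_\ell)\|$, with $\hat\mu=\mu_{\init}$ or $\hat\mu=\alpha$.

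Dividing the two bounds gives
\[
\|x_{\ell+1}-x_\ell\|\ge\frac{\rho\bar\mu^2\zeta^2}{1+L\hat\mu}\,\|E(x_\ell)\|.
\]
This reproduces exactly the stated constants. There is no real obstacle here. The only point needing care is to use the lower step-size bound $\mu_{\min}$ in the numerator but the upper bound $\mu_{\init}$ in the denominator. In the line-search case we also rely on $\mu_{\min}$ being chosen via \eqref{e.mu-min}, as in Lemma~\ref{le:lowerstep}.
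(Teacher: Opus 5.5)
Your proposal is correct and follows the paper's proof essentially step by step. Both write $\|x_{\ell+1}-x_\ell\|=(x_\ell-\bar x_\ell)^TE(\bar x_\ell)/\|E(\bar x_\ell)\|$, bound the numerator below via \eqref{e.rbls}, $\mu_\ell\ge\mu_{\min}$ (or $\mu_\ell=\alpha$) and the lower scaling bound, and bound the denominator above by $(1+L\mu_{\init})\|E(x_\ell)\|$ (or $(1+L\alpha)\|E(x_\ell)\|$) using Lipschitz continuity and the upper scaling bound, which yields exactly the stated constants.
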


\begin{proof}
Since the algorithm generates an infinite sequence, the stopping criterion
is not satisfied at any iteration. In particular, $E(x_\ell)\neq0$,
for all $\ell\ge0$. Since $x_{\ell+1}
=
P_{\ol H_\ell}(x_\ell)$, using \eqref{e.proj}, we obtain
\begin{equation}\label{e.step-lower}
\|x_{\ell+1}-x_\ell\|
=
\dist(x_\ell,\ol H_\ell)
=
{
(x_\ell-\bar x_\ell)^T E(\bar x_\ell)
}/{
\|E(\bar x_\ell)\|
}.
\end{equation}
Moreover, since $\mu_\ell\le\mu_{\init}$ in the line-search step-size implementation and $\mu_\ell=\alpha$ in the fixed step-size implementation, the Lipschitz continuity of $E$ and the upper scaling condition in \eqref{e:sccon} yield
\begin{equation}\label{e.residual-trial-bound}
\begin{aligned}
\|E(\bar x_\ell)\|
&\le
\|E(x_\ell)\|
+
\|E(\bar x_\ell)-E(x_\ell)\|
\le
\|E(x_\ell)\|
+
L\mu_\ell\|p_\ell^{\scal}\|\\
&\le
C\|E(x_\ell)\|.
\end{aligned}
\end{equation}
where $C
:=
1+L\mu_{\init}
>0$ in variant and $C
:=
1+L\alpha
>0$ in fixed case, by Step~({\sc S1}) of
Algorithm \ref{a.FDS}. We consider the two step-size implementations separately.

\medskip
\noindent
\textbf{Line search step-size implementation.}
Substituting \gzit{e.rbls} into \gzit{e.step-lower}, we obtain
\begin{equation}\label{e.step-lower-descent}
\|x_{\ell+1}-x_\ell\|
\ge
\rho
{
\mu_\ell^2\|p_\ell^{\scal}\|^2
}/{
\|E(\bar x_\ell)\|
}.
\end{equation}
Furthermore, by \eqref{e:sccon} and \eqref{e.mu-min}, we have $\|p_\ell^{\scal}\|
\ge
\zeta\|E(x_\ell)\|$ and $\mu_\ell
\ge
\mu_{\min}
$, respectively. Combining these inequalities along with \gzit{e.residual-trial-bound} in \gzit{e.step-lower-descent} yields
\[
\begin{aligned}
\|x_{\ell+1}-x_\ell\|
&\ge
\rho
\frac{
\mu_\ell^2\|p_\ell^{\scal}\|^2
}{
\|E(\bar x_\ell)\|
}
\ge
\rho
\frac{
\mu_{\min}^2\zeta^2\|E(x_\ell)\|^2
}{
C\|E(x_\ell)\|
} =
\frac{
\rho\mu_{\min}^2\zeta^2
}{
1+L\mu_{\init}
}
\|E(x_\ell)\|.
\end{aligned}
\]

\medskip
\noindent
\textbf{Fixed step-size implementation.}
When Algorithm \ref{a.FDS} is implemented with a fixed step size
$\mu_\ell=\alpha$ satisfying \eqref{e.alpha_bound}, the distance between
the current iterate and the trial point satisfies
$
\|x_\ell-\bar x_\ell\|
=
\alpha\|p_\ell^{\scal}\|
\ge
\alpha\zeta\|E(x_\ell)\|,
$
where the last inequality follows from the lower scaling condition in
\eqref{e:sccon}. Moreover, Lemma~\ref{lem:lipschitz-direction} yields
$
(p_\ell^{\scal})^T E(\bar x_\ell)
\le
-\rho\alpha\|p_\ell^{\scal}\|^2.
$
Hence, by \eqref{e.rbls},
$
(x_\ell-\bar x_\ell)^T E(\bar x_\ell)
\ge
\rho\alpha^2\|p_\ell^{\scal}\|^2.
$
Therefore, \eqref{e.step-lower} gives
\[
\|x_{\ell+1}-x_\ell\|
\ge
\rho
{
\alpha^2\|p_\ell^{\scal}\|^2
}/{
\|E(\bar x_\ell)\|
}.
\]
Using the residual bound \gzit{e.residual-trial-bound} together with the
lower scaling condition in \eqref{e:sccon}, we obtain
\[
\begin{aligned}
\|x_{\ell+1}-x_\ell\|
&\ge
\rho
\frac{
\alpha^2\|p_\ell^{\scal}\|^2
}{
\|E(\bar x_\ell)\|
}
\ge
\rho
\frac{
\alpha^2\zeta^2\|E(x_\ell)\|^2
}{
C\|E(x_\ell)\|
}
\ge
\frac{
\rho\alpha^2\zeta^2
}{
1+L\alpha
}
\|E(x_\ell)\|.
\end{aligned}
\]
Therefore, in both implementations, \eqref{eq:step-lower-strong} holds
with the stated constant $c>0$ independent of $\ell$. \qed
\end{proof}

The following result is similar to
\cite[Proposition~5.4(iii)]{Bauschke2017}.
\begin{lem}\label{le:distzero}
Suppose that Assumptions {\rm (\Asol)} and {\rm (\Alip)} hold and that
the sequence $\{x_\ell\}_{\ell\ge0}$ generated by Algorithm \ref{a.FDS} is
infinite. Then
\begin{equation}\label{e.distzero}
\lim_{\ell\to\infty}
\dist(x_\ell,X^*)
=
0.
\end{equation}
\end{lem}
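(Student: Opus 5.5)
The plan is to derive the claim directly from Theorem~\ref{th:gl}, which already gives convergence of the whole sequence to a solution. Let $\hat x\in X^*$ be the limit provided by that theorem, so that $x_\ell\to\hat x$. Since $\hat x\in X^*$, the definition of the distance function gives, for every $\ell$,
\[
0\le \dist(x_\ell,X^*)\le \|x_\ell-\hat x\|.
\]
The right-hand side tends to zero, so \eqref{e.distzero} follows by squeezing.

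As a sanity check, and to mirror \cite[Proposition~5.4(iii)]{Bauschke2017}, I would also record the Fej\'er-type reasoning, which does not use the full convergence statement. The sequence $d_\ell$ from \eqref{e.defdell} is nonincreasing by \eqref{eq:fejer-descent}, hence it converges to some limit $d\ge0$. By \eqref{e.infz} there is a subsequence with $\|E(x_{\ell_j})\|\to0$. Lemma~\ref{le:bo}(i) gives boundedness, so after passing to a further subsequence $x_{\ell_j}\to\hat x$. Continuity of $E$ then gives $\hat x\in X^*$, and therefore $d_{\ell_j}\le\|x_{\ell_j}-\hat x\|\to0$. Because the whole sequence $d_\ell$ is monotone and has a subsequence tending to zero, its limit is $d=0$.

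The only point needing care is that $X^*$ is nonempty and closed, so that $\dist(\cdot,X^*)$ is well defined and finite. Nonemptiness is exactly (\Asol). Closedness follows because $X^*=E^{-1}(\{0\})$ and $E$ is continuous under (\Alip). I do not expect any real obstacle; the statement is essentially a corollary of Theorem~\ref{th:gl}, and either argument above would serve as the proof.
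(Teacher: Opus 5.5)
Your first argument is correct and is essentially the paper's proof: the paper also invokes Theorem~\ref{th:gl} to obtain $x_\ell\to x^*\in X^*$ and then uses continuity of $\dist(\cdot,X^*)$, which is exactly what your squeeze $\dist(x_\ell,X^*)\le\|x_\ell-\hat x\|\to0$ makes explicit. Your Fej\'er-based alternative is also valid but not needed, since it repeats the part of the proof of Theorem~\ref{th:gl} that produces a solution cluster point.
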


\begin{proof}
By Theorem~\ref{th:gl}, there exists $x^*\in X^*$ such that $x_\ell
\to
x^*$. Since the distance function to the nonempty closed set $X^*$ is
continuous, it follows that $\dist(x_\ell,X^*)$ converges to $\dist(x^*,X^*)
=
0$. This completes the proof. \qed
\end{proof}

\begin{lem}[Discrete comparison principles]
\label{lem:disc-comparison}
Let $\{a_\ell\}_{\ell\ge0}$ be a nonnegative sequence satisfying $a_{\ell+1}
\le
a_\ell$ for all $\ell\ge0$.
\begin{enumerate}
\item[(i)]
Suppose that there exists a sequence of positive constants
$\{\gamma_\ell\}_{\ell\ge\ell_0}$ such that
\begin{equation}\label{eq:quartic-rec-gen}
a_{\ell+1}^2
\le
a_\ell^2
-
\gamma_\ell a_\ell^4,
\qquad
\forall \ell\ge\ell_0.
\end{equation}
If $\liminf_{\ell\to\infty}\gamma_\ell
>
0$, then $a_\ell
=
O(\ell^{-1/2})$.

\item[(ii)]
Independently of part {\rm (i)}, suppose that $\sum_{\ell=0}^{\infty}a_\ell^2
<
\infty$. Then, $\ell a_\ell^2
\to
0$, and hence $
a_\ell
=
o(\ell^{-1/2})$.
\end{enumerate}
\end{lem}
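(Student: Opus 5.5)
For part (i), I would use the reciprocal-square trick. Fix $\gamma>0$ and $\ell_1\ge\ell_0$ with $\gamma_\ell\ge\gamma$ for all $\ell\ge\ell_1$; such a pair exists because $\liminf\gamma_\ell>0$. If some $a_{\ell_2}=0$, monotonicity and nonnegativity give $a_\ell=0$ for all $\ell\ge\ell_2$, and the claim is trivial. So assume $a_\ell>0$. Set $b_\ell:=a_\ell^2$. Then \eqref{eq:quartic-rec-gen} becomes $b_{\ell+1}\le b_\ell(1-\gamma b_\ell)$, which in particular forces $\gamma b_\ell<1$. Dividing,
\[
\frac{1}{b_{\ell+1}}\ge\frac{1}{b_\ell(1-\gamma b_\ell)}\ge\frac{1}{b_\ell}(1+\gamma b_\ell)=\frac{1}{b_\ell}+\gamma,
\]
using $1/(1-t)\ge 1+t$ for $t\in[0,1)$. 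Telescoping from $\ell_1$ gives $1/b_\ell\ge 1/b_{\ell_1}+\gamma(\ell-\ell_1)\ge\gamma(\ell-\ell_1)$. Hence $a_\ell^2\le 1/(\gamma(\ell-\ell_1))$, which is at most $2/(\gamma\ell)$ for $\ell\ge2\ell_1$. The finitely many earlier indices are absorbed into the constant, since $a_\ell\le a_0$. This yields $a_\ell=O(\ell^{-1/2})$.

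For part (ii), I would use only monotonicity together with the Cauchy tail criterion. Because $a_\ell$ is nonincreasing and nonnegative, $a_j^2\ge a_\ell^2$ for $\lceil\ell/2\rceil\le j\le\ell$. Therefore
\[
\Big(\ell-\lceil\ell/2\rceil+1\Big)a_\ell^2\le\sum_{j=\lceil\ell/2\rceil}^{\ell}a_j^2\le\sum_{j\ge\lceil\ell/2\rceil}a_j^2\to0,
\]
since it is the tail of a convergent series. The prefactor is at least $\ell/2$, so $\ell a_\ell^2\to0$, and taking square roots gives $a_\ell=o(\ell^{-1/2})$.

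The only delicate point is the division step in (i). It needs positivity of $b_\ell$ and $1-\gamma b_\ell>0$, which the recursion itself supplies, together with the separate handling of the case where $a_\ell$ hits zero. Part (ii) is a standard Olivier-type argument and uses no hypothesis from (i).
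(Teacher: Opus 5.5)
Your proof is correct and follows essentially the same route as the paper. In (i) you telescope the reciprocal squares $1/a_\ell^2$ and obtain the increment $\gamma$ via $1/(1-t)\ge 1+t$, where the paper divides by $a_\ell^2a_{\ell+1}^2$ and uses $a_\ell^2/a_{\ell+1}^2\ge 1$, which is an equivalent step; in (ii) you use the same half-tail monotonicity bound, with $\lceil \ell/2\rceil$ in place of the paper's $\lfloor \ell/2\rfloor$, and your explicit handling of the indices $\ell<2\ell_1$ is a small piece of care the paper leaves implicit.
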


\begin{proof}
{\rm (i)}
If $a_\ell=0$ for some $\ell\ge\ell_0$, then the nonnegativity and
monotonicity of $\{a_\ell\}_{\ell\ge0}$ imply that $a_k=0$,  for all $k\ge\ell$, and the conclusion follows immediately. Assume, therefore, that $a_\ell>0$, for all $\ell\ge\ell_0$. Since $a_{\ell+1}
\le
a_\ell$, we have ${a_\ell^2}/{a_{\ell+1}^2}
\ge
1$. Dividing \eqref{eq:quartic-rec-gen} by
$a_\ell^2a_{\ell+1}^2$ gives
\[
{1}/{a_{\ell+1}^2}
- {1}/{a_\ell^2}
\ge
\gamma_\ell
{a_\ell^2}/{a_{\ell+1}^2}
\ge
\gamma_\ell,
\qquad
\forall \ell\ge\ell_0.
\]
Summing from $k=\ell_0$ to $\ell-1$ yields ${1}/{a_\ell^2}
\ge{1}/{a_{\ell_0}^2}
+
\sum_{k=\ell_0}^{\ell-1}\gamma_k$.
Since $\liminf_{\ell\to\infty}\gamma_\ell
>
0$, there exist constants $\gamma>0$ and $\ell_1\ge\ell_0$ such that $\gamma_\ell
\ge
\gamma$ for all $\ell\ge\ell_1$. Hence, for every $\ell>\ell_1$, $\sum_{k=\ell_1}^{\ell-1}\gamma_k
\ge
\gamma(\ell-\ell_1)$. Therefore, ${1}/{a_\ell^2}
\ge
\gamma(\ell-\ell_1)$,  and consequently $a_\ell
\le{1}/{
\sqrt{\gamma(\ell-\ell_1)}
}$. Thus, $a_\ell
=
O(\ell^{-1/2})$.

\medskip
\noindent
{\rm (ii)}
For each sufficiently large $\ell$, define $m_\ell
:=
\left\lfloor
{\ell}/{2}
\right\rfloor$. Since $\{a_\ell\}_{\ell\ge0}$ is nonincreasing, 
$(\ell-m_\ell+1)a_\ell^2
\le
\sum_{k=m_\ell}^{\ell}a_k^2$.
It follows that $\ell a_\ell^2
\le({\ell}/{
(\ell-m_\ell+1))
}
\sum_{k=m_\ell}^{\ell}a_k^2
\le
2
\sum_{k=m_\ell}^{\infty}a_k^2$. Since $m_\ell
\to
\infty$
and $\sum_{k=0}^{\infty}a_k^2
<
\infty$,  the right-hand side converges to zero. Hence, $\ell a_\ell^2
\to
0$. Equivalently, $\sqrt{\ell}\,a_\ell
\to
0$, and therefore $a_\ell
=
o(\ell^{-1/2})$. \qed
\end{proof}

\subsection{Baseline Sublinear Rates under Exact Projection}
\label{subsec:baseline_sublinear}

In this subsection, we establish a baseline sublinear convergence rate for
Algorithm \ref{a.FDS} under the exact projection rule. Under Assumptions
{\rm (\Asol)} and {\rm (\Alip)}, we derive a global best-iterate residual
rate using Fej\'er monotonicity, square summability of the projection
steps, and the lower bound relating the projection-step length to the
residual norm. No strong monotonicity assumption is required.

\begin{thm}[Baseline convergence rate of Algorithm \ref{a.FDS}]
\label{thm:sublinear_rate}
Let Assumptions {\rm (\Asol)} and {\rm (\Alip)} hold, and suppose that
Algorithm \ref{a.FDS} uses the exact projection rule. Then 
\begin{equation}\label{e.sumResidualSquares}
\sum_{\ell=0}^{\infty}
\|E(x_\ell)\|^2
<
\infty
\end{equation}
and therefore there exists a constant $C>0$ such that
\begin{equation}\label{e.mininq}
(\ell+1)
\min_{0\le k\le\ell}
\|E(x_k)\|^2
\le
\sum_{k=0}^{\ell}
\|E(x_k)\|^2
\le
C;
\end{equation} 
i.e., the best-iterate estimate $\D\min_{0\le k\le \ell}\|E(x_k)\|
=
O(\ell^{-1/2})$ holds.
\end{thm}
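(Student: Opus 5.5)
The plan is to combine the square summability of the projection steps from Lemma~\ref{le:bo}(ii-2) with the lower bound of Corollary~\ref{cor:step-lower-strong}. First dispose of the trivial case. If Algorithm~\ref{a.FDS} terminates after finitely many iterations, the sum in \eqref{e.sumResidualSquares} has finitely many terms, so the conclusion is immediate. We may therefore assume that an infinite sequence $\{x_\ell\}_{\ell\ge0}$ is generated, with $E(x_\ell)\neq0$ for all $\ell$. In this case both Lemma~\ref{le:bo} and Corollary~\ref{cor:step-lower-strong} apply, in either the line-search realization (with $\mu_{\min}$ chosen as in \eqref{e.mu-min}) or the fixed-step realization (with $\alpha$ satisfying \eqref{e.alpha_bound}).

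The key step is the chain of inequalities. By Corollary~\ref{cor:step-lower-strong}, $c\|E(x_\ell)\|\le\|x_{\ell+1}-x_\ell\|$, with $c>0$ independent of $\ell$. Squaring, summing over $\ell=0,\dots,N-1$, and using the telescoped Fej\'er inequality \eqref{eq:fejer-descent} as in the proof of Lemma~\ref{le:bo}(ii-2) gives, for any fixed $x^*\in X^*$,
\[
c^2\sum_{\ell=0}^{N-1}\|E(x_\ell)\|^2\le\sum_{\ell=0}^{N-1}\|x_{\ell+1}-x_\ell\|^2\le\|x_0-x^*\|^2 .
\]
Letting $N\to\infty$ yields \eqref{e.sumResidualSquares}. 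Since $x^*$ is arbitrary, we may take $C:=\dist(x_0,X^*)^2/c^2$, or simply $C:=\|x_0-x^*\|^2/c^2$ for one fixed $x^*$.

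Finally, \eqref{e.mininq} follows because each of the $\ell+1$ terms in $\sum_{k=0}^{\ell}\|E(x_k)\|^2$ is at least the minimum, and the partial sum is bounded by the full series, hence by $C$. Taking square roots gives $\min_{0\le k\le\ell}\|E(x_k)\|\le\sqrt{C/(\ell+1)}=O(\ell^{-1/2})$.

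The only delicate point is ensuring that the constant $c$ of Corollary~\ref{cor:step-lower-strong} is genuinely uniform. This requires the accepted line-search steps to satisfy $\mu_\ell\ge\mu_{\min}$, which holds by Lemma~\ref{le:lowerstep} under \eqref{e.mu-min}, together with the upper bound $\|E(\bar x_\ell)\|\le(1+L\mu_{\init})\|E(x_\ell)\|$. Both facts are already established, so no new obstacle arises; I would simply note explicitly which realization-dependent constant is being used.
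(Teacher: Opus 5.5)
Your proposal is correct and follows essentially the same route as the paper. You square the uniform lower bound $\|x_{\ell+1}-x_\ell\|\ge c\|E(x_\ell)\|$ from Corollary~\ref{cor:step-lower-strong} and combine it with the square summability of the projection steps from the telescoped Fej\'er inequality \eqref{eq:fejer-descent}; \eqref{e.mininq} then follows, and your explicit choice $C=\dist(x_0,X^*)^2/c^2$ is a harmless sharpening of the paper's unspecified constant.
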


\begin{proof}
The asymptotic rate statement concerns an infinite sequence. If the
algorithm terminates finitely with \(E(x_{\ell_*})=0\), the conclusion
is immediate after extending the sequence by
\(x_\ell:=x_{\ell_*}\) for all \(\ell\ge\ell_*\). We therefore assume that the sequence
$\{x_\ell\}_{\ell\ge0}$ is infinite.

By Corollary~\ref{cor:step-lower-strong}, there exists a constant $c>0$,
independent of $\ell$, such that $\|x_{\ell+1}-x_\ell\|
\ge
c\|E(x_\ell)\|$. Consequently, $\|E(x_\ell)\|^2
\le({1}/{c^2})
\|x_{\ell+1}-x_\ell\|^2$. Combining this inequality with \eqref{e.sum2diffx} gives \eqref{e.sumResidualSquares}, i.e.,
\[
\sum_{\ell=0}^{\infty}
\|E(x_\ell)\|^2
\le
\frac{1}{c^2}
\sum_{\ell=0}^{\infty}
\|x_{\ell+1}-x_\ell\|^2
<
\infty.
\]
Hence, there exists a constant $C>0$ such that \eqref{e.mininq} holds. Therefore, $\D\min_{0\le k\le\ell}
\|E(x_k)\|
\le
\sqrt{{C}/{(\ell+1)}}
=
O(\ell^{-1/2})$. \qed
\end{proof}

\subsection{Linear convergence and $o(\ell^{-1})$ residual decay}\label{subsec:JFS_inertial_rate}

In this subsection, we first establish the asymptotic distance rate
\(o(\ell^{-1/2})\) under the local error-bound Assumption {\rm (\Aeb)}.
We then establish local linear convergence of Algorithm \ref{a.FDS}
under the exact projection rule and the stronger Assumption {\rm (\Areg)}. By
Theorem~\ref{thm:structural-eb}, this structural condition induces a local
error bound in a neighborhood of the limit solution and locally isolates that
solution. Once the iterates enter this neighborhood, the error bound combines
with the Fej\'er decrease and the uniform lower bound on the projection-step
length to yield a uniform contraction of the distance to the solution set.
The local isolation supplied by Assumption {\rm (\Areg)} then identifies this
distance with the distance to the limit solution, yielding \(R\)-linear
convergence of the iterates. The residual sequence consequently converges
geometrically, implying the sharper last-iterate decay
\(o(\ell^{-1})\). The argument does not require global strong monotonicity
or global uniqueness of the solution set.

\begin{thm}[Linear convergence and $o(\ell^{-1})$ residual decay]
\label{thm:JFS_inertial_rate}
Let $\{x_\ell\}_{\ell\geq0}$ be an infinite sequence generated by Algorithm \ref{a.FDS} using the exact
projection rule, and suppose that Assumptions {\rm (\Asol)} and {\rm (\Alip)} hold.
If Assumption~{\rm(\Aeb)} holds, then the distance sequence $d_\ell
=
\dist(x_\ell,X^*)$ in \eqref{e.defdell} satisfies $d_\ell
=
o(\ell^{-1/2})$. Moreover, if Assumption~{\rm(\Areg)} holds at the limit solution \(x^*\) supplied by
Theorem~\ref{th:gl}, then there exist an index $\ell_0\ge0$, a constant $q\in(0,1)$, and a constant $M>0$ such
that
\begin{equation}\label{e.linearsys}
\|x_\ell-x^*\|
\le
Mq^{\,\ell-\ell_0}, 
\qquad \|E(x_\ell)\|
\le
LMq^{\,\ell-\ell_0}
\qquad \forall \ell\ge\ell_0;
\end{equation}
i.e., $\{x_\ell\}_{\ell\geq0}$ converges
$R$-linearly to a solution $x^*\in X^*$ and the residual sequence
satisfies $\|E(x_\ell)\|
=
o(\ell^{-1})$.
\end{thm}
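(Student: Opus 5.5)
The proof has two independent parts. For the $o(\ell^{-1/2})$ distance rate under (\Aeb), I take $a_\ell:=d_\ell=\dist(x_\ell,X^*)$ and apply Lemma~\ref{lem:disc-comparison}(ii).

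\emph{Monotonicity of $d_\ell$.} By the first form of \eqref{eq:fejer-descent}, $d_{\ell+1}^2\le d_\ell^2-\|x_{\ell+1}-x_\ell\|^2\le d_\ell^2$, so $\{d_\ell\}$ is nonincreasing and nonnegative. This also follows from \eqref{e.Fejer} by evaluating at the projection of $x_\ell$ onto the closed set $X^*$.

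\emph{Square summability.} By Lemma~\ref{le:distzero}, $d_\ell\to0$, so there is $\ell_1$ with $x_\ell\in\mathcal N_\delta(X^*)$ for all $\ell\ge\ell_1$. For such $\ell$, \eqref{e.EB} gives $d_\ell\le\vartheta^{-1}\|E(x_\ell)\|$. Theorem~\ref{thm:sublinear_rate} gives $\sum_\ell\|E(x_\ell)\|^2<\infty$, hence $\sum_\ell d_\ell^2<\infty$. Lemma~\ref{lem:disc-comparison}(ii) then yields $\ell d_\ell^2\to0$, i.e., $d_\ell=o(\ell^{-1/2})$. The finitely many indices below $\ell_1$ do not affect summability.

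\emph{Local contraction under (\Areg).} Let $x^*$ be the limit from Theorem~\ref{th:gl}. Theorem~\ref{thm:structural-eb} gives $\vartheta,\delta$ with $\vartheta\dist(x,X^*)\le\|E(x)\|$ on $B_\delta(x^*)$. Its proof also shows that, after shrinking $\delta$, $\dist(x,X^*)=\|x-x^*\|$ there, by local isolation. Since $x_\ell\to x^*$, pick $\ell_0$ with $x_\ell\in B_\delta(x^*)$ for all $\ell\ge\ell_0$. Combining \eqref{eq:fejer-descent} (with this $x^*$) with Corollary~\ref{cor:step-lower-strong} and the error bound gives
\[
\|x_{\ell+1}-x^*\|^2\le\|x_\ell-x^*\|^2-c^2\|E(x_\ell)\|^2\le(1-c^2\vartheta^2)\|x_\ell-x^*\|^2 .
\]
Set $q:=\sqrt{1-c^2\vartheta^2}$. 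Applying the same chain at any other $\hat x\in X^*$ shows $c\vartheta\le1$, so $q\in[0,1)$; I will replace $q$ by $\max\{q,1/2\}$ to guarantee $q\in(0,1)$. Iterating gives $\|x_\ell-x^*\|\le Mq^{\ell-\ell_0}$ with $M:=\|x_{\ell_0}-x^*\|$, or $M:=\max\{\|x_{\ell_0}-x^*\|,1\}$ to keep $M>0$.

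\emph{Residual bound and $o(\ell^{-1})$.} Since $E(x^*)=0$, Lipschitz continuity gives $\|E(x_\ell)\|\le L\|x_\ell-x^*\|\le LMq^{\ell-\ell_0}$. Finally, $\ell q^{\ell}\to0$, so $\ell\|E(x_\ell)\|\to0$, i.e., $\|E(x_\ell)\|=o(\ell^{-1})$.

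\emph{Main obstacle.} The delicate step is making the error bound in terms of $\dist(\cdot,X^*)$ usable against $\|x_\ell-x^*\|$ for the specific limit. Theorem~\ref{thm:structural-eb} only supplies a pointwise neighborhood of $x^*$, so everything hinges on $x_\ell\to x^*$ to enter that ball. The Fejér inequality must also be applied with the same $x^*$, so that the contraction is for $\|x_\ell-x^*\|$ rather than just $d_\ell$. The distance $d_\ell$ is used only in the first part, where the neighborhood $\mathcal N_\delta(X^*)$ of (\Aeb) is entered via Lemma~\ref{le:distzero}.
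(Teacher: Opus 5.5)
Your proposal is correct and follows essentially the paper's argument: square-summability of $d_\ell$ via the error bound, Theorem~\ref{thm:sublinear_rate}, and Lemma~\ref{lem:disc-comparison}(ii); then the Fej\'er, step-length and error-bound contraction with factor $\sqrt{1-c^2\vartheta^2}$; and finally Lipschitz continuity for the residual. The differences are cosmetic. You contract $\|x_\ell-x^*\|$ directly via Fej\'er at the fixed limit, using local isolation to convert the error bound, whereas the paper contracts $d_\ell$ and identifies $d_\ell=\|x_\ell-x^*\|$ afterwards. You also secure $q\in(0,1)$ via $\max\{q,1/2\}$, whereas the paper excludes $c\vartheta\ge1$ using the infiniteness of the sequence.
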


\begin{proof}
The proof has three steps.

\textbf{Step 1: Local error bound and sublinear distance rate.}
By Theorem~\ref{th:gl}, there exists $x^*\in X^*$ such that $x_\ell
\to
x^*$. Moreover, by Lemma~\ref{le:distzero}, $d_\ell
=
\dist(x_\ell,X^*)
\to
0$.

Suppose first that Assumption {\rm (\Aeb)} holds. Then, for all sufficiently
large $\ell$, we have $x_\ell\in\mathcal N_\delta(X^*)$, and hence, there exists $\ell_0\ge0$ such that
\begin{equation}\label{e.effbou}
\|E(x_\ell)\|
\ge
\vartheta d_\ell,
\qquad
\forall \ell\ge\ell_0.
\end{equation}

If instead Assumption {\rm (\Areg)} holds at the limit solution $x^*$, then
Theorem~\ref{thm:structural-eb} gives constants $\delta>0$ and $\vartheta>0$
such that \gzit{e.localEB} holds. Since $x_\ell\to x^*$, after increasing
$\ell_0$ if necessary, $x_\ell\in B_\delta(x^*)$ for all
$\ell\ge\ell_0$, and therefore \eqref{e.effbou} holds in this case as well. Equivalently,
\begin{equation}\label{eq:step3_triangle_clean}
d_\ell^2
\le
\frac{1}{\vartheta^2}
\|E(x_\ell)\|^2,
\qquad
\forall \ell\ge\ell_0.
\end{equation}
From \eqref{e.sumResidualSquares}, $\sum_{\ell=0}^{\infty}
\|E(x_\ell)\|^2
<
\infty$. It follows that
\begin{equation}\label{e.tauxZe}
\sum_{\ell=\ell_0}^{\infty}
d_\ell^2
\le
\frac{1}{\vartheta^2}
\sum_{\ell=\ell_0}^{\infty}
\|E(x_\ell)\|^2
<
\infty.
\end{equation}
Adding the finite initial part yields
\begin{equation}\label{eq:summ_d2_final}
\sum_{\ell=0}^{\infty}
d_\ell^2
<
\infty.
\end{equation}
From \eqref{eq:fejer-descent}, $d_{\ell+1}
\le
d_\ell$ for all $\ell\ge0$. Therefore, by \eqref{eq:summ_d2_final},
Lemma~\ref{lem:disc-comparison}(ii) yields $\ell d_\ell^2
\to
0$. Equivalently, $\sqrt{\ell}\,d_\ell
\to
0$, and hence
$d_\ell
=
o(\ell^{-1/2})$.

For the remainder of the proof, suppose that Assumption {\rm (\Areg)}
holds. By Theorem~\ref{thm:structural-eb}, after reducing $\delta$ if
necessary, $x^*$ is the unique solution of $E(x)=0$ in
$B_\delta(x^*)$. Since $x_\ell\to x^*$, after increasing $\ell_0$ if
necessary, we have
$x_\ell
\in
B_{\delta/3}(x^*)$ for all $\ell\ge\ell_0$. For every $\ell\ge\ell_0$, any solution outside $B_\delta(x^*)$ is at
distance greater than $2\delta/3$ from $x_\ell$, whereas $\|x_\ell-x^*\|
<{\delta}/{3}$. Consequently from \gzit{e.defdell}, $d_\ell
=
\dist(x_\ell,X^*)=
\|x_\ell-x^*\|$, for all $\ell\ge\ell_0$.

\medskip
\noindent
\textbf{Step 2: $R$-linear convergence of $\{x_\ell\}_{\ell\ge0}$.}
By \eqref{eq:fejer-descent}, $d_{\ell+1}^2
\le
d_\ell^2
-
\|x_{\ell+1}-x_\ell\|^2$. Combining \eqref{eq:step-lower-strong} with \eqref{e.effbou}, we obtain $\|x_{\ell+1}-x_\ell\|
\ge
c\vartheta d_\ell$ for all $\ell\ge\ell_0$. Substituting this bound into \eqref{eq:fejer-descent} gives $d_{\ell+1}^2
\le
\bigl(1-c^2\vartheta^2\bigr)d_\ell^2$ 
for all $\ell\ge\ell_0$. If $c^2\vartheta^2\ge1$, then finite convergence follows. Since the generated sequence is infinite, this case is excluded. Therefore,
$0<c^2\vartheta^2<1$, and we define
$q
:=
\sqrt{1-c^2\vartheta^2}
\in
(0,1)$. Then, $d_{\ell+1}
\le
q\,d_\ell$  for all $\ell\ge\ell_0$. Iterating this inequality yields
$d_\ell
\le
d_{\ell_0}q^{\,\ell-\ell_0}$ for all $\ell\ge\ell_0$. Since
$d_\ell=\|x_\ell-x^*\|$ for all $\ell\ge\ell_0$, we obtain $\|x_\ell-x^*\|
\le
Mq^{\,\ell-\ell_0}$ for all $\ell\ge\ell_0$, where $M:=d_{\ell_0}$. Therefore, $\{x_\ell\}_{\ell\ge0}$ converges
$R$-linearly to $x^*$.

\textbf{Step 3: Residual decay.}
Since $E(x^*)=0$ and {\rm (\Alip)} holds, we obtain
\[
\|E(x_\ell)\|
=
\|E(x_\ell)-E(x^*)\|
\le
L\|x_\ell-x^*\|.
\]
Therefore, for every $\ell\ge\ell_0$, $\|E(x_\ell)\|
\le
LMq^{\,\ell-\ell_0}
=
Cq^\ell$, where $C
:=
LMq^{-\ell_0}$.  Since $\ell q^\ell
\to0$ as $\ell\to\infty$, it follows that $\ell\|E(x_\ell)\|
\le
C\ell q^\ell
\to
0$. Hence, $\|E(x_\ell)\|
=
o(\ell^{-1})$. \qed
\end{proof}

\begin{rem}[Role of the local error bound]
The linear contraction of the distance sequence does not rely on strong
monotonicity of $E$. Once the iterates enter a neighborhood in which a local
error bound holds, the decrease induced by the exact projection step combines
with the error bound to produce a uniform contraction of the distance
sequence. The contraction factor is determined by the projection-step
constant $c$ and the local error-bound constant $\vartheta$. Under Assumption
{\rm (\Areg)}, the additional local isolation of the limit solution identifies
$d_\ell$ with $\|x_\ell-x^*\|$ for all sufficiently large $\ell$, and hence
the distance contraction yields $R$-linear convergence of the iterates to
$x^*$.
\end{rem}

\subsection{Complexity Analysis}

In this subsection, we derive iteration and residual-evaluation complexity
bounds for Algorithm \ref{a.FDS} under the exact projection rule. The global
best-iterate residual estimate obtained in
Theorem~\ref{thm:sublinear_rate} yields an
\(O(\varepsilon^{-2})\) iteration bound for reaching a residual of size
at most \(\varepsilon\), while the local \(R\)-linear convergence result
of Theorem~\ref{thm:JFS_inertial_rate} yields the pointwise bound
\(O(\log\varepsilon^{-1})\). For the line-search step-size implementation, we
additionally bound the number of residual evaluations performed by the
backtracking line search using the uniform safeguard
\(\mu_{\min}>0\). These bounds count residual evaluations but do not include
the application-dependent cost of evaluating the operator itself.

\begin{thm}[Iteration and residual-evaluation complexity]
\label{thm:complexity}
Suppose that Assumptions {\rm (\Asol)} and {\rm (\Alip)} hold, and let
$\{x_\ell\}_{\ell\ge0}$ be the infinite sequence generated by
Algorithm \ref{a.FDS} using the exact projection rule.

\begin{enumerate}

\item[\rm (i)]
\textbf{Baseline best-iterate complexity.}
For every target accuracy $\varepsilon>0$, there exists an index
$k_\varepsilon\ge0$ such that $\|E(x_{k_\varepsilon})\|
\le
\varepsilon$ and
\begin{equation}\label{eq:iteration_bound_sub}
k_\varepsilon
\le
\left\lceil
C_{\rm sub}\varepsilon^{-2}
\right\rceil
-1
=
O(\varepsilon^{-2}),
\end{equation}
where $C_{\rm sub}>0$ is independent of $\varepsilon$.

\item[\rm (ii)]
\textbf{Local linear-regime complexity.}
Suppose, in addition, that Assumption {\rm (\Areg)} holds at the limit
point $x^*\in X^*$. Then there exist an index $\ell_0\ge0$ and constants
$q\in(0,1)$ and $C_{\rm lin}>0$ such that the first index
\[
\ell_\varepsilon
:=
\min
\left\{
\ell\ge\ell_0:
\|E(x_\ell)\|
\le
\varepsilon
\right\}
\]
satisfies
\begin{equation}\label{eq:iteration_bound_lin}
\ell_\varepsilon
\le
\ell_0+
\max
\left\{
0,
\left\lceil
{
\log(C_{\rm lin}/\varepsilon)
}/{
\log(1/q)
}
\right\rceil
\right\}
=
O\!\left(
\log\varepsilon^{-1}
\right).
\end{equation}

\item[\rm (iii)]
\textbf{Residual-evaluation complexity.}
For the line-search step-size implementation, suppose that the backtracking
procedure uses a factor $\gamma>1$, starts from $\mu_{\init}>0$, and
generates trial step sizes by the safeguarded update
\[
\mu
\leftarrow
\max\left\{
{\mu}/{\gamma},
\mu_{\min}
\right\}.
\]
Let $\mu_{\min}>0$ be the uniform lower bound on the accepted step sizes
defined in \eqref{e.mu-min}. Then the number of line-search residual
evaluations at each iteration is bounded above by
\begin{equation}\label{eq:ls_bound_corrected}
C_{\rm LS}
:=
1+
\left\lceil
{
\log(\mu_{\init}/\mu_{\min})
}/{
\log\gamma
}
\right\rceil.
\end{equation}

If at most one additional residual evaluation is required outside the
line search at each iteration, then the total number of residual
evaluations through iteration $\ell$ satisfies $N_{\rm eval}(\ell)
\le
(C_{\rm LS}+1)(\ell+1)$. Consequently,
\[
N_{\rm eval}
=
\begin{cases}
O\!\left(
C_{\rm LS}\varepsilon^{-2}
\right),
&
\text{for the baseline best-iterate guarantee},
\\[1ex]
O\!\left(
C_{\rm LS}\log\varepsilon^{-1}
\right),
&
\text{in the local linear regime}.
\end{cases}
\]
\end{enumerate}
\end{thm}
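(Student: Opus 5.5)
The plan is to derive each of the three parts directly from results already established, with no new estimates beyond bookkeeping.

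\textbf{Part (i).} I will invoke Theorem~\ref{thm:sublinear_rate}. It gives a constant $C>0$, independent of $\varepsilon$, such that $(\ell+1)\min_{0\le k\le\ell}\|E(x_k)\|^2\le C$ for every $\ell$. Set $C_{\rm sub}:=C$ and $\ell:=\lceil C_{\rm sub}\varepsilon^{-2}\rceil-1$, so that $\ell+1\ge C\varepsilon^{-2}$. Then $\min_{0\le k\le\ell}\|E(x_k)\|^2\le C/(\ell+1)\le\varepsilon^2$. Taking $k_\varepsilon$ to be a minimizing index in $\{0,\dots,\ell\}$ gives $\|E(x_{k_\varepsilon})\|\le\varepsilon$ and $k_\varepsilon\le\ell$, which is \eqref{eq:iteration_bound_sub}. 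If $\lceil C\varepsilon^{-2}\rceil=0$ cannot occur because $C>0$, so $\ell\ge0$ is well defined.

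\textbf{Part (ii).} I will apply Theorem~\ref{thm:JFS_inertial_rate} under (\Areg). It provides $\ell_0$, $q\in(0,1)$ and $M>0$ with $\|E(x_\ell)\|\le LMq^{\ell-\ell_0}$ for all $\ell\ge\ell_0$. Set $C_{\rm lin}:=LM$ and $m:=\max\{0,\lceil\log(C_{\rm lin}/\varepsilon)/\log(1/q)\rceil\}$. Then $q^m\le\varepsilon/C_{\rm lin}$, so the index $\ell_0+m$ satisfies $\|E(x_{\ell_0+m})\|\le\varepsilon$. This index belongs to the set defining $\ell_\varepsilon$, which is therefore nonempty, and its minimum satisfies \eqref{eq:iteration_bound_lin}. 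The $\max\{0,\cdot\}$ covers the case $\varepsilon\ge C_{\rm lin}$. Since $\ell_0$, $q$ and $C_{\rm lin}$ do not depend on $\varepsilon$, the bound is $O(\log\varepsilon^{-1})$.

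\textbf{Part (iii).} The per-iteration count $C_{\rm LS}$ is exactly Proposition~\ref{pr0}, rewritten with $\log_\gamma(a)=\log a/\log\gamma$. Its hypotheses hold because $\mu_{\min}$ chosen by \eqref{e.mu-min} satisfies \eqref{eq:mu-min-bound}, and every scaled direction satisfies \eqref{e:sccon} and \eqref{e.rac} by Proposition~\ref{p.angle} and the scaling rule. The assumption that at most one further residual evaluation, namely $E(x_{\ell})$ after projection, occurs per iteration gives at most $C_{\rm LS}+1$ evaluations per iteration. Summing over iterations $0,\dots,\ell$ yields $N_{\rm eval}(\ell)\le(C_{\rm LS}+1)(\ell+1)$. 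Substituting $\ell=k_\varepsilon$ from (i) and $\ell=\ell_\varepsilon$ from (ii) produces the two stated orders; in the local regime $\ell_0$ is absorbed as an $\varepsilon$-independent constant.

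The only point I expect to need care is matching the iteration indexing in Algorithm~\ref{a.FDS} (trial at $\ell-1$, projection giving $x_\ell$) with the per-iteration count. Any off-by-one discrepancy only changes constants, which do not affect the $O(\cdot)$ statements.
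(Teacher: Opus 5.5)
Your proposal is correct and follows the paper's proof essentially step for step. Part (i) uses the bound $(\ell+1)\min_{0\le k\le\ell}\|E(x_k)\|^2\le C$ from Theorem~\ref{thm:sublinear_rate} with $\ell=\lceil C_{\rm sub}\varepsilon^{-2}\rceil-1$, part (ii) uses the geometric residual bound of Theorem~\ref{thm:JFS_inertial_rate} with $C_{\rm lin}=LM$, and part (iii) counts backtracking reductions and multiplies by $\ell+1$. The only cosmetic difference is that you cite Proposition~\ref{pr0} for the per-iteration count, whereas the paper re-derives it inline.
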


\begin{proof}
\medskip
\noindent
\textbf{(i) Baseline best-iterate complexity.}
By Theorem~\ref{thm:sublinear_rate}(i), there exists a constant
$C_{\rm sub}>0$ such that, for every $\ell\ge0$, \gzit{e.mininq}
holds, so that $\min_{0\le k\le\ell}
\|E(x_k)\|
\le
\sqrt{{C_{\rm sub}}/{(\ell+1)}
}$. A sufficient condition for the existence of an index
$k_\varepsilon\in\{0,\ldots,\ell\}$ satisfying $\|E(x_{k_\varepsilon})\|
\le
\varepsilon$ is $\sqrt{{C_{\rm sub}}/{(\ell+1)}
}
\le
\varepsilon$, or equivalently, $\ell+1
\ge{C_{\rm sub}}/{\varepsilon^2}$. Thus, choosing $\ell
=
\left\lceil{C_{\rm sub}}/{\varepsilon^2}
\right\rceil
-1$
guarantees the existence of an index $k_\varepsilon\le\ell$ satisfying
the prescribed accuracy. Consequently, $k_\varepsilon
\le
\left\lceil
C_{\rm sub}\varepsilon^{-2}
\right\rceil
-1$, which proves \eqref{eq:iteration_bound_sub}.

This is a best-iterate complexity result. Since the residual sequence
$\{\|E(x_\ell)\|\}_{\ell\ge0}$ need not be nonincreasing, this estimate
does not imply the same $O(\ell^{-1/2})$ bound for every current iterate.

\medskip
\noindent
\textbf{(ii) Local linear-regime complexity.}
Suppose that Assumption {\rm (\Areg)} holds at the limit point
$x^*\in X^*$. From \gzit{e.linearsys}, there exist an
index $\ell_0\ge0$, a constant $q\in(0,1)$, and a constant $M>0$ such
that
$\|E(x_\ell)\|
\le
C_{\rm lin} q^{\,\ell-\ell_0}$  for all $\ell\ge\ell_0$, where $C_{\rm lin}
:=
LM$. A sufficient condition for $\|E(x_\ell)\|
\le
\varepsilon$ is therefore $C_{\rm lin}q^{\,\ell-\ell_0}
\le
\varepsilon$. If $C_{\rm lin}\le\varepsilon$, then $\ell_0$ already satisfies the
prescribed accuracy. Otherwise, since $q\in(0,1)$, the preceding
inequality is equivalent to $\ell-\ell_0
\ge
\D\frac{
\log(C_{\rm lin}/\varepsilon)
}{
\log(1/q)
}$. Consequently, the condition \eqref{eq:iteration_bound_lin}, that is $\ell_\varepsilon
=
O\!\left(
\log\varepsilon^{-1}
\right)$, is obtained from
\[
\ell_\varepsilon
\le
\ell_0+
\max
\left\{
0,
\left\lceil
\frac{
\log(C_{\rm lin}/\varepsilon)
}{
\log(1/q)
}
\right\rceil
\right\}.
\]

\medskip
\noindent
\textbf{(iii) Residual-evaluation complexity.}
At iteration $\ell$, let $j_\ell\ge0$ denote the number of backtracking
reductions performed before the line-search terminates. Before activation
of the safeguard, the $j$th trial step size is
${\mu_{\init}}/{\gamma^j}$. The safeguard update $\mu
\leftarrow
\max\left\{
{\mu}/{\gamma},
\mu_{\min}
\right\}$ ensures that no tested step size is smaller than $\mu_{\min}
$. Therefore, the number of backtracking reductions satisfies
\[
j_\ell
\le
\left\lceil
\frac{
\log(\mu_{\init}/\mu_{\min})
}{
\log\gamma
}
\right\rceil.
\]
Since one residual evaluation is required for each tested trial point,
the number of line-search residual evaluations is $j_\ell+1$ and is
therefore bounded above by $C_{\rm LS}$ in
\eqref{eq:ls_bound_corrected}.

If at most one additional residual evaluation is required outside the
line search at each iteration, then the total number of residual
evaluations through iteration $\ell$ satisfies
\[
N_{\rm eval}(\ell)
\le
(C_{\rm LS}+1)(\ell+1).
\]
Combining this estimate with the iteration bounds in parts~{\rm (i)} and
{\rm (ii)} yields
\[
N_{\rm eval}
=
O\!\left(
C_{\rm LS}\varepsilon^{-2}
\right)
\]
for the baseline best-iterate guarantee and
\[
N_{\rm eval}
=
O\!\left(
C_{\rm LS}\log\varepsilon^{-1}
\right)
\]
in the local linear regime. \qed
\end{proof}

\begin{rem}
The estimates above provide upper bounds on indices at which the
prescribed accuracy is guaranteed. They are worst-case complexity
estimates and do not assert that the residual remains above
$\varepsilon$ at every earlier index. In the baseline regime, the result is a best-iterate statement: there
exists an index $k_\varepsilon$ satisfying
$k_\varepsilon\le
\left\lceil
C_{\rm sub}\varepsilon^{-2}
\right\rceil
-1$
and $
\|E(x_{k_\varepsilon})\|
\le
\varepsilon$. In the local linear regime, the pointwise geometric residual estimate
provides an upper bound on the first index $\ell_\varepsilon$ satisfying $\|E(x_{\ell_\varepsilon})\|
\le
\varepsilon$. Accordingly, the conclusions are upper bounds of the form $k_\varepsilon
\le
\bar\ell_\varepsilon$ or $\ell_\varepsilon
\le
\bar\ell_\varepsilon$, rather than equalities with the corresponding complexity thresholds. The improvement from the
$O(\varepsilon^{-2})$ best-iterate complexity to the
$O(\log(\varepsilon^{-1}))$ pointwise complexity follows from the local
error-bound contraction induced by the exact projection step.
\end{rem}
\section{Numerical results}\label{s.num}

In this section, we investigate the numerical performance of the two
proposed projection-based subspace frameworks: the line-search framework
\texttt{PLS-S} and its fixed-step counterpart \texttt{PF-S}. Both
frameworks employ the Jacobian-free subspace directions introduced in this
paper and differ in their step-size strategy. The \texttt{PLS-S} variants
determine the trial step by the projected line-search procedure, whereas the
\texttt{PF-S} variants use a prescribed fixed step size and therefore do not
invoke a line search.

The numerical experiments have three main purposes. First, we compare the
different two- and three-term subspace directions within both the
\texttt{PLS-S} and \texttt{PF-S} frameworks to identify their most
robust and efficient variants. Second, we compare representative
line-search and fixed-step variants to assess the effect of the step-size
strategy. Third, we compare the strongest proposed variants with
\texttt{EG} and \texttt{OGDA} on the complete test set and examine their
behavior on representative learning-based problems.

Additional background, structural analysis, implementation details, and
numerical results are provided in the supplementary material \cite{suppMat}.
To make the supplementary presentation self-contained, Section~1 of
\cite{suppMat} recalls the nonlinear monotone equation problem considered in
this paper, its connection with convex--concave min--max formulations, and the
MARL, RAL, and GAN application classes. Section~2
examines the asymptotic and canonical \texttt{OGDA}-type direction structures,
including representative \texttt{OGDA}-type methods, the residual--memory
representation of the proposed subspace directions, and the corresponding
unification and convergence-rate implications. Section~3 gives the complete
construction of the $100$ learning-based test problems, including their
dimensions, parameter ranges, known solutions, Lipschitz
constants, initialization procedure, stopping criterion, and numerical
validation. Sections~4--6 report the detailed comparisons of the
\texttt{JFS2dir}, \texttt{JFS3dir1}--\texttt{JFS3dir2}, and
\texttt{JFS3dir3} families, respectively, for both the line-search
\texttt{PLS-S} and fixed-step \texttt{PF-S} realizations. Section~7
reports the efficiency improvements of the fixed-step \texttt{PF-S}
variants over their line-search \texttt{PLS-S} counterparts, while
Section~8 compares the \texttt{OGDA}-type methods, including
\texttt{EG}, \texttt{OGDA}, \texttt{EAG}, and \texttt{FOGDA}. Thus,
\cite{suppMat} provides the structural details and numerical evidence
supporting the theoretical and numerical summaries presented in this paper.
For clarity, throughout the numerical section we use the naming convention
summarized in Tables~\ref{tab:jfs2}--\ref{tab:jfs3dir3} of
Appendix~\ref{app:table}. 
The suffix identifies the particular Jacobian-free subspace direction and
coefficient choice employed by the method.

Specifically,
\texttt{PLS-S-FR}, \texttt{PLS-S-PR}, \texttt{PLS-S-HS},
\texttt{PLS-S-DY}, \texttt{PLS-S-LS}, \texttt{PLS-S-DL}, and
\texttt{PLS-S-HZ} denote the line-search variants associated with the
\texttt{JFS2dir} family, while
\texttt{PF-S-FR}, \texttt{PF-S-PR}, \texttt{PF-S-HS},
\texttt{PF-S-DY}, \texttt{PF-S-LS}, \texttt{PF-S-DL}, and
\texttt{PF-S-HZ} denote their fixed-step counterparts. Similarly,
\texttt{PLS-S-Z1} and \texttt{PLS-S-Z2}, together with
\texttt{PF-S-Z1} and \texttt{PF-S-Z2}, correspond to the
\texttt{JFS3dir1} family; the \texttt{An1}, \texttt{An2}, and
\texttt{De} variants of both frameworks correspond to
\texttt{JFS3dir2}; and
\texttt{PLS-S-A1}--\texttt{PLS-S-A8} together with
\texttt{PF-S-A1}--\texttt{PF-S-A8} correspond to the
\texttt{JFS3dir3} family. Thus, for example,
\texttt{PLS-S-FR} denotes the line-search method employing the
Fletcher--Reeves choice in \texttt{JFS2dir}, whereas
\texttt{PF-S-FR} denotes the corresponding fixed-step method using the
same subspace direction. In the notation \texttt{PLS-S-LS} and
\texttt{PF-S-LS}, the final suffix \texttt{LS} refers to the
Liu--Storey direction and should not be confused with the line-search
mechanism represented by the prefix \texttt{PLS-S}.

For compact presentation in the numerical tables and figures, the
line-search variants are abbreviated by their direction names, for example,
\texttt{LS}, \texttt{PR}, \texttt{Z1}, and \texttt{A7}. The
corresponding fixed-step variants are distinguished by the prefix
\texttt{F-}; for example, \texttt{F-LS}, \texttt{F-PR},
\texttt{F-Z1}, and \texttt{F-A7} denote
\texttt{PF-S-LS}, \texttt{PF-S-PR}, \texttt{PF-S-Z1}, and
\texttt{PF-S-A7}, respectively. This convention is used consistently in
the tables and figures below.

For \texttt{JFS3dir3}, an additional choice concerns the auxiliary vector
$h_\ell$. In \cite{al1}, the choices $h_\ell=E(x_\ell)$ and
$h_\ell=y_{\ell-1}$ are considered. In the present experiments, motivated
by the use of information from the preceding iterate in \texttt{OGDA}, we
instead choose $h_\ell:=E(x_{\ell-1})$. This choice is kept fixed for all
\texttt{JFS3dir3} variants in both the \texttt{PLS-S} and
\texttt{PF-S} frameworks.

For numerical stability, the curvature denominators appearing in the subspace coefficients are safeguarded whenever they are zero or sufficiently close to zero in finite-precision arithmetic. In particular, the safeguards are applied to the denominators involving $y_{\ell-1}^Ts_{\ell-1}$ and $y_{\ell-1}^Tp_{\ell-1}$ before the corresponding coefficients are evaluated. The precise safeguarding construction and its well-definedness are given in Proposition~\ref{pr3} of Appendix~\ref{app:subparam}. After safeguarding, the usual coefficient bounds, direction scaling, and angle condition are imposed as in Algorithm~\ref{a.FDS}.

The remainder of this section is organized as follows. We first specify the
tuning parameters used by the proposed and comparison methods and introduce
the tools used to assess robustness and computational efficiency. The
complete construction and validation of the learning-based test problems,
as well as the detailed comparisons within the individual
\texttt{JFS2dir}, \texttt{JFS3dir1}, \texttt{JFS3dir2}, and
\texttt{JFS3dir3} families and among the \texttt{OGDA}-type methods, are
reported in the supplementary material \cite{suppMat}. Here, we focus on
the most robust or efficient representatives identified by those detailed
comparisons. We compare selected \texttt{PLS-S} and \texttt{PF-S}
variants, assess the effect of the step-size strategy, and then compare the
most competitive proposed method with \texttt{EG} and \texttt{OGDA}.
Finally, representative learning-based problems are used to illustrate the
convergence behavior of the methods with respect to both the NME merit
function and the objective-value error of the underlying min--max problems.

The supplementary material, MATLAB implementations of the proposed
\texttt{PLS-S} and \texttt{PF-S} methods, test-equation collection, and
numerical examples used in the experiments are available in
\cite{suppMat}.

\subsection{Tuning parameters}
\label{subsec:tuneParam}

We next specify the tuning parameters used in the numerical experiments.
To make the experiments reproducible, we distinguish the parameters of
the proposed \texttt{PLS-S} and \texttt{PF-S} methods from those of the comparison methods
\texttt{EG} and \texttt{OGDA}. Throughout this section, we use the
notation introduced in Section~\ref{sec.new}; in particular, the
implementation-specific variable names are replaced by their mathematical
counterparts.

\bfi{Parameters of \texttt{PLS-S}.}
The projected line search in Step~{\rm(S1$_a$)} of
Algorithm~\ref{a.FDS} is determined by the sufficient-descent parameter
\(\rho\), the backtracking factor \(\gamma\), and the initial trial
step size \(\mu_{\init}\). In all experiments, we set
\[
    \rho=10^{-2},
    \qquad
    \gamma=2,
    \qquad
    \mu_{\init}={0.48}/{L},
\]
where \(L\) is the Lipschitz constant associated with the corresponding
test problem. Thus, whenever the initial trial step size is rejected, the
line search successively reduces it by the factor
\(1/\gamma=1/2\).

The residual-based angle condition \eqref{e.rac} is imposed with
\(\Delta_{\angle}=0.9\). Whenever the candidate direction
fails this condition, it is modified according to
\eqref{e.sigma}--\eqref{e.pcg2}, and the resulting direction is subsequently scaled as prescribed
in Step~{\rm(S5)} of Algorithm~\ref{a.FDS}.

The lower safeguard for the line-search step size is then chosen according to
\eqref{e.mu-min}, namely,
\[
    \mu_{\min}
    =
    \min\left\{
        \mu_{\init},
        \frac{\Delta_{\angle}}
             {\gamma(L+\rho)}
    \right\}
    =
    \min\left\{
        \frac{0.48}{L},
        \frac{0.9}
             {2(L+10^{-2})}
    \right\}.
\]
Consequently, \(\mu_{\min}\) is not treated as an independently tuned
parameter; it is determined by \(L\) together with
\(\Delta_{\angle}\), \(\gamma\), and \(\rho\), in accordance with the
line-search analysis in Section~\ref{sec.new}.

\bfi{Parameters of \texttt{PF-S}.}
For the fixed-step realization \texttt{PF-S}, the line-search procedure is
not invoked. Instead, Step~{\rm(S1$_b$)} of Algorithm~\ref{a.FDS} uses a
constant step size \(\mu_{\ell}=\alpha\) satisfying
\eqref{e.alpha_bound}. In the numerical experiments, we use
\[
    \alpha
    =
    \min\left\{
        1,
        \frac{0.9}
             {L+10^{-2}}
    \right\},
\]
where \(L\) is the Lipschitz constant associated with the corresponding
test problem. Thus, \(\alpha\) is selected as the largest step size allowed
by the admissibility condition \eqref{e.alpha_bound}. The same
sufficient-descent and angle parameters as for \texttt{PLS-S} are used,
namely, $\rho=10^{-2}$ and $\Delta_{\angle}=0.9$. Hence, the sufficient-descent inequality
\eqref{e.bls} holds directly and no backtracking procedure is required.

For the controlled scaling of the subspace directions in
\eqref{e.scaleDir}, we use \(\zeta=10^{-3}\). Hence, after scaling, every
nonzero search direction satisfies \eqref{e:sccon}. This scaling rule and
the residual-based angle safeguard are used for both \texttt{PLS-S} and
\texttt{PF-S}.

The particular member of the \texttt{PLS-S} or \texttt{PF-S} class is determined by the
choice of the Jacobian-free subspace direction in
\eqref{e.pcg1}. Thus, \texttt{FR}, \texttt{PR}, \texttt{HS},
\texttt{DY}, \texttt{LS}, \texttt{HZ}, and \texttt{DL} select members
of the two-term \texttt{JFS2dir} family; \texttt{Z1} and \texttt{Z2}
select the two members of \texttt{JFS3dir1}; \texttt{An1},
\texttt{An2}, and \texttt{De} select the members of
\texttt{JFS3dir2}; and \texttt{A1}--\texttt{A8} select the members of
\texttt{JFS3dir3}. These choices specify the subspace formula rather
than a continuous tuning parameter.

For the \texttt{JFS3dir3} family, the auxiliary parameters entering the
definitions of the coefficients are fixed at
\(\bar\gamma=0.8\), \(\varsigma_1=10^{-2}\), and
\(\varsigma_2=10^{2}\). Accordingly, the corresponding coefficient
construction uses
\[
    \gamma_\ell
    =
    \max\left\{
        \varsigma_1,
        \min\{\varsigma_2,\bar\gamma_\ell\}
    \right\}.
\]
For the \texttt{DL} direction, the constant appearing in the modified
residual difference is fixed at \(0.1\). These constants are kept
unchanged in all experiments in which the corresponding direction is
used. 

For the safeguarded subspace coefficients of all directions, we use $\beta_{\min}=-10^{50}$ and $
\beta_{\max}=10^{50}$.

\bfi{Parameters of \texttt{EG} and \texttt{OGDA}.}
For the comparison methods, we use the unconstrained versions of
\texttt{EG} and \texttt{OGDA}.
Their principal tuning parameter is the step size \(s>0\).
For \texttt{EG}, the iterations are
\[
    w_k=x_k-sE(x_k),
    \qquad
    x_{k+1}=x_k-sE(w_k),
\]
whereas \texttt{OGDA} uses
\[
    x_{k+1}
    =
    x_k-2sE(x_k)+sE(x_{k-1}),
\]
after the initial step \(x_1=x_0-sE(x_0)\).
In the numerical experiments, the step size supplied to both methods is $s=0.48/L$, where \(L\) is the Lipschitz constant associated with the corresponding
test problem. Consistently with the admissible step-size regimes of the two
methods, the implementation additionally uses the safety caps
\[
    s\le{0.99}/{L}
    \qquad\text{for \texttt{EG}},
    \qquad
    s\le{0.49}/{L}
    \qquad\text{for \texttt{OGDA}}.
\]
Since \(0.48/L\) lies below both caps, neither safeguard is activated in the
reported experiments. Consequently, the effective step size used by both
methods is
\[
    s_{\rm EG}=s_{\rm OGDA}={0.48}/{L}.
\]
Thus, \texttt{EG} and \texttt{OGDA} are compared using the same Lipschitz-scaled step size, while the implementation retains method-specific
safety caps consistent with the strict step-size requirements.

\subsection{Tools for Efficiency and Robustness}

We denote by $\mathcal S$ the set of solvers and by $\mathcal P$ the set of test problems. A solver $s\in\mathcal S$ is regarded as efficient on a collection of test problems if it achieves a low computational cost relative to the other solvers. To assess the relative efficiency of the solvers, we use the performance profiles of Dolan and Mor{\'e} \cite{DolM}, together with a scalar efficiency measure introduced below.

Let $c_{s,p}$ denote the {\bf cost measure} associated with solver
$s\in\mathcal S$ on problem $p\in\mathcal P$. For a problem that is
successfully solved, the {\bf performance ratio} is defined by
\[
pfr_{s,p}
:=
\frac{c_{s,p}}
{\displaystyle\min_{\ol{s}\in\mathcal S}c_{\ol{s},p}}.
\]
The corresponding {\bf performance profile} of solver $s$ is
\[
\rho_s(\tau)
:=
\frac{1}{|\mathcal P|}
\left|
\left\{
p\in\mathcal P
\,\middle|\,
pfr_{s,p}\le\tau
\right\}
\right|,
\qquad \tau\ge 1.
\]
Thus, $\rho_s(\tau)$ gives the fraction of test problems for which
solver $s$ performs within a factor $\tau$ of the best solver with
respect to the given cost measure. In particular, $\rho_s(1)$ represents the fraction of problems for which solver $s$ attains the best value of the corresponding cost measure. As usual, unsuccessful runs are assigned an infinite performance ratio and hence do not contribute to $\rho_s(\tau)$ for any finite $\tau$.

In addition to performance profiles, we use a scalar efficiency measure \cite{VRBBO}
to summarize the relative performance of each solver over the entire
test set. For a given solver $s\in\mathcal S$ and problem
$p\in\mathcal P$, its {\bf efficiency} with respect to the given cost
measure is defined by
\[
e_{s,p}
:=
\left\{
\begin{array}{ll}
\displaystyle
100\,
\frac{\min_{\ol{s}\in\mathcal S}c_{\ol{s},p}}
     {c_{s,p}},
& \hbox{if solver $s$ solves problem $p$},\\[3mm]
0,
& \hbox{otherwise.}
\end{array}
\right.
\]
Hence, for every successfully solved problem, $e_{s,p}
={100}/{pfr_{s,p}}$, and therefore $0<e_{s,p}\le 100$. A value of $100$ means that solver $s$ attains the lowest cost among all
solvers on problem $p$, whereas smaller positive values indicate a higher cost relative to the best solver. An unsuccessful run is assigned zero efficiency.

Although the efficiency measure and the performance profile are both
based on the same relative cost, they summarize the results differently.
The performance profile $\rho_s(\tau)$ describes the distribution of
the performance ratios over the test set and, consequently, shows the
fraction of problems for which a solver lies within a prescribed factor
of the best solver. In contrast, $e_{s,p}$ converts the performance
ratio for each problem into a percentage score.

For each solver $s\in\mathcal S$, we define its {\bf mean efficiency}
over the complete test set by
\[
e_{T,s}
:=
({1}/{|\mathcal P|})
\sum_{p\in\mathcal P}e_{s,p}; \ \ \text{equivalently}, \ \ e_{T,s}
=
({100}/{|\mathcal P|})
\sum_{\substack{p\in\mathcal P\\
                 s\ {\rm solves}\ p}}
({1}/{pfr_{s,p}}),
\]
since unsolved problems contribute zero to the sum. Consequently,
\[
0\le e_{T,s}\le 100.
\]
A larger value of $e_{T,s}$ indicates better overall efficiency over
the complete test set. Notice that $e_{T,s}$ accounts simultaneously
for efficiency and successful solution of the problems: failure to solve
a problem contributes zero to the mean efficiency.

For the tabulated comparisons, we consider the number of function
evaluations, denoted by \texttt{nf}, and the computational time in
milliseconds, denoted by \texttt{msec}, as the two cost measures.
Accordingly, the corresponding mean efficiencies are denoted by $e_{T,s}^{\tt nf}$ and $e_{T,s}^{\tt msec}$, respectively. All efficiencies reported in the tables are expressed as
percentages and are rounded to integers.

We also use the number of successfully solved problems as a measure of
{\bf robustness}. More precisely, problem $p$ is said to be {\bf solved}
by solver $s$ if $f\le 10^{-5}$, and is said to be {\bf unsolved} otherwise. Thus, a solver that
successfully solves a larger number of problems is regarded as more
robust on the test set.

For additional information about unsuccessful runs, the tables report
three anomaly counters. For a given solver $s$, $\#n$ denotes the number
of problems for which the prescribed maximum number of evaluations is
reached, $\#t$ denotes the number of problems for which the prescribed
maximum computational time is reached, and $\#f$ denotes the number of
problems for which the solver fails, encounters an error, or terminates
because of another algorithmic termination condition. In the numerical
experiments, the stopping limits are ${\tt nf}\le 20000$ and ${\tt sec}\le 300$.  Together, the number of solved problems, the anomaly counters, and the
mean efficiencies $e_{T,s}^{\tt nf}$ and $e_{T,s}^{\tt msec}$ provide
complementary measures of robustness and computational efficiency.

\subsection{A comparison between most robust or efficient}\label{sec:allres}

Table~\ref{tab:allres} compares selected variants that are either the most
robust or the most efficient within their respective groups, as identified in
the detailed comparisons reported in \cite[Tables~3--6]{suppMat}, while
Figure~\ref{f.f} reports the corresponding performance profiles for
\texttt{PLS-S-Z2}, \texttt{OGDA}, and \texttt{EG} in terms of
\texttt{nf} and \texttt{msec}.

\begin{center}
\begin{adjustbox}{center, width=\columnwidth-1pt}
\footnotesize
\begin{tabular}{? l? l? r?? r? r? r?? r? r?}
\hline
\multicolumn{1}{?l}{stopping test:~~~~~~} &  \multicolumn{7}{c?}{$f \le$ {\tt 1e-05,}~~~~~~~~~~~ ${\tt sec}\le$ {\tt 300,}~~~~~~~~~~~ ${\tt nf}\le$ {\tt 20000}}\\
\hline
\multicolumn{3}{?l??}{100 of 100 problems solved} &\multicolumn{3}{r??}{} & \multicolumn{2}{c?}{}\\
\cline{1-8}
\multicolumn{3}{?l??}{dim$\in$[8,135]}&\multicolumn{3}{r??}{\# of anomalies} &\multicolumn{2}{c?}{$e_{T,s}$ in \%}\\
\hline
\multicolumn{2}{?l?}{solver} & solved&\#n&\#t&\#f&$e_{T,s}^{\tt nf}$&$e_{T,s}^{\tt msec}$\\
\hline
{\bf PLS-S-Z2}& {\bf Z2}&\cellcolor[HTML]{C0C0C0}100&0&0&0&\cellcolor[HTML]{C0C0C0}94&\cellcolor[HTML]{C0C0C0}94\\
{\bf PF-S-LS}& {\bf F-LS}&\cellcolor[HTML]{C0C0C0}100&0&0&0&44&46\\
{\bf PF-S-A4}& {\bf F-A4}&\cellcolor[HTML]{C0C0C0}100&0&0&0&40&50\\
\hline
\multicolumn{3}{?l??}{100 of 100 problems solved} &\multicolumn{3}{r??}{} & \multicolumn{2}{c?}{}\\
\cline{1-8}
\multicolumn{3}{?l??}{dim$\in$[8,135]}&\multicolumn{3}{r??}{\# of anomalies} &\multicolumn{2}{c?}{$e_{T,s}$ in \%}\\
\hline
\multicolumn{2}{?l?}{solver} & solved&\#n&\#t&\#f&$e_{T,s}^{\tt nf}$&$e_{T,s}^{\tt msec}$\\
\hline
{\bf PLS-S-Z2}& {\bf Z2}&\cellcolor[HTML]{C0C0C0}100&0&0&0&\cellcolor[HTML]{C0C0C0}80&\cellcolor[HTML]{C0C0C0}95\\
{\bf OGDA}& {\bf OGDA}&\cellcolor[HTML]{C0C0C0}100&0&0&0&61&41\\
{\bf EG}& {\bf EG}&\cellcolor[HTML]{C0C0C0}100&0&0&0&30&14\\
\hline
\end{tabular}
\end{adjustbox}

\captionof{table}{Comparison between the most robust or efficient variants.}
\label{tab:allres}

\medskip
\end{center}

The first block of Table~\ref{tab:allres} compares three selected variants
of the proposed projection-based subspace framework. All three methods,
\texttt{PLS-S-Z2}, \texttt{PF-S-LS}, and \texttt{PF-S-A4}, solve the
entire test set without anomalies. Among them, \texttt{PLS-S-Z2} is
clearly the most efficient in both measures, attaining
$e_{T,s}^{\tt nf}=94\%$ and $e_{T,s}^{\tt msec}=94\%$.
The fixed-step variants \texttt{PF-S-LS} and \texttt{PF-S-A4} attain
residual-evaluation efficiencies of $44\%$ and $40\%$, respectively,
while their computational-time efficiencies are $46\%$ and $50\%$,
respectively. Thus, within this direct comparison,
\texttt{PLS-S-Z2} provides the strongest overall performance.

The second block of Table~\ref{tab:allres} directly compares
\texttt{PLS-S-Z2} with the benchmark methods \texttt{OGDA} and
\texttt{EG}. All three methods solve all $100$ problems without anomalies.
However, \texttt{PLS-S-Z2} is substantially more efficient, attaining
$e_{T,s}^{\tt nf}=80\%$ and $e_{T,s}^{\tt msec}=95\%$. The corresponding
efficiencies of \texttt{OGDA} are $61\%$ and $41\%$, whereas
\texttt{EG} attains $30\%$ and $14\%$, respectively. In particular,
\texttt{OGDA} is more competitive than \texttt{EG} in terms of residual
evaluations, but neither benchmark method matches the overall efficiency
of \texttt{PLS-S-Z2}.

\begin{figure}[http!]
	\centering
	\begin{tabular}{cc}
		\includegraphics[width=0.47\textwidth]{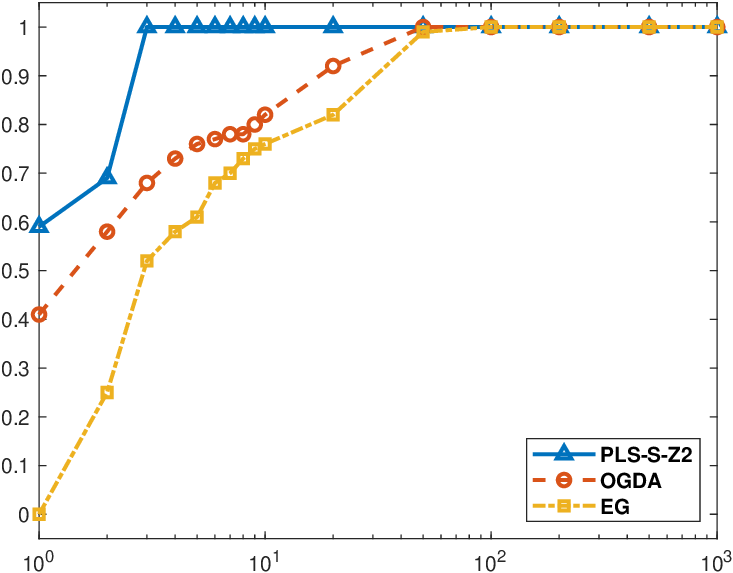}
		&
		\includegraphics[width=0.47\textwidth]{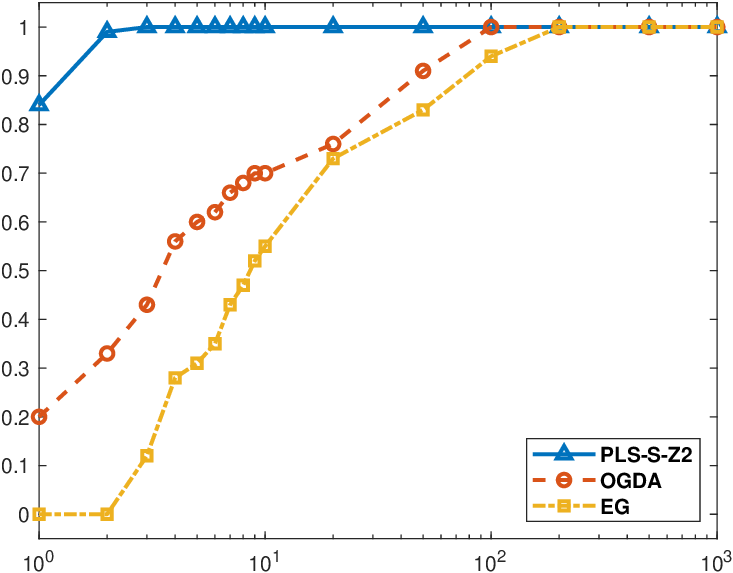}
	\end{tabular}
	\caption{Performance profiles of \texttt{PLS-S-Z2}, \texttt{OGDA}, and \texttt{EG} in terms of \texttt{nf} (left) and \texttt{msec} (right). The performance profile plots \(\rho(\tau)\) versus the performance ratio \(\tau\).}
	\label{f.f}
\end{figure}

\begin{figure}[http!]
	\centering
	\begin{tabular}{cc}
		\includegraphics[width=0.47\textwidth]
		{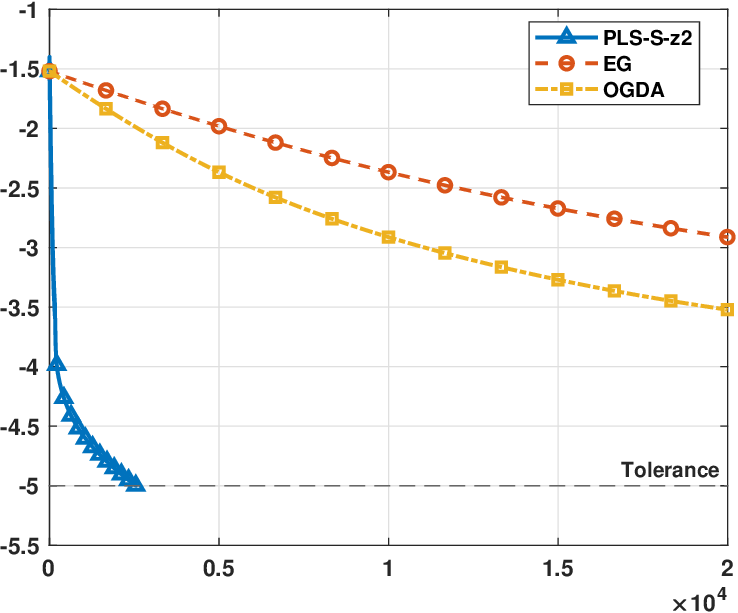}
		&
		\includegraphics[width=0.47\textwidth]
		{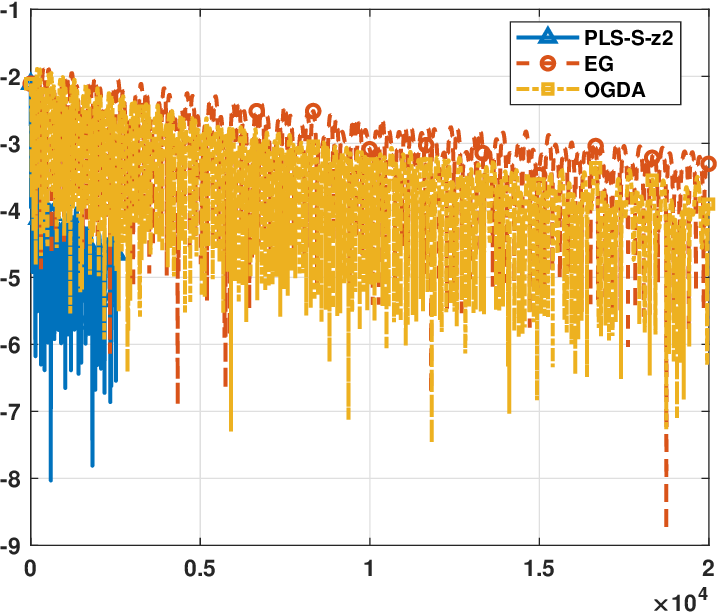}
		\\[1mm]
		\includegraphics[width=0.47\textwidth]
		{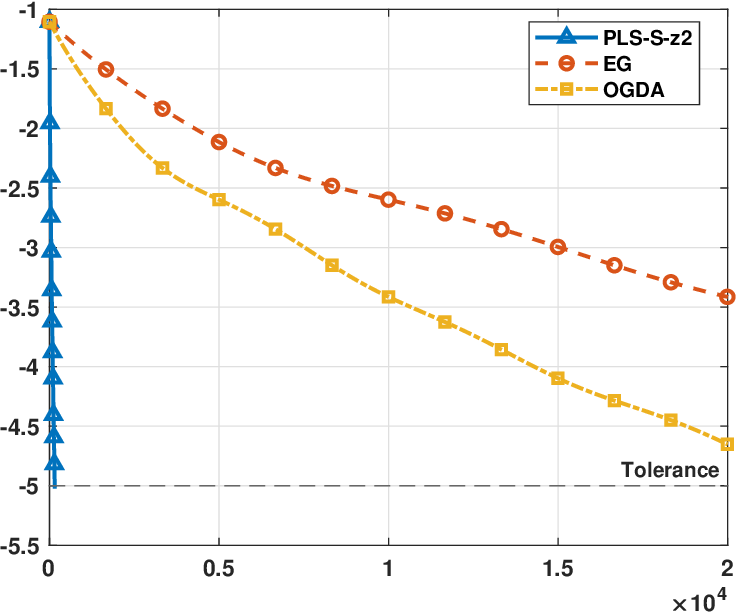}
		&
		\includegraphics[width=0.47\textwidth]
		{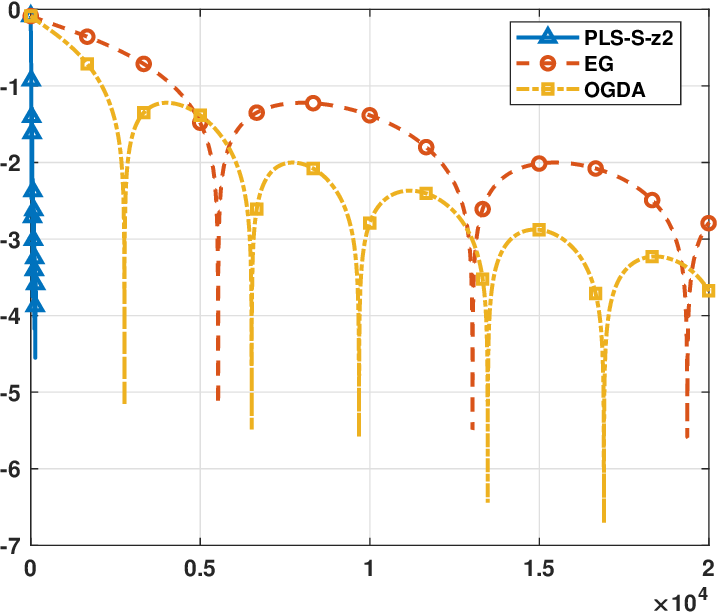}
		\\[1mm]
		\includegraphics[width=0.47\textwidth]
		{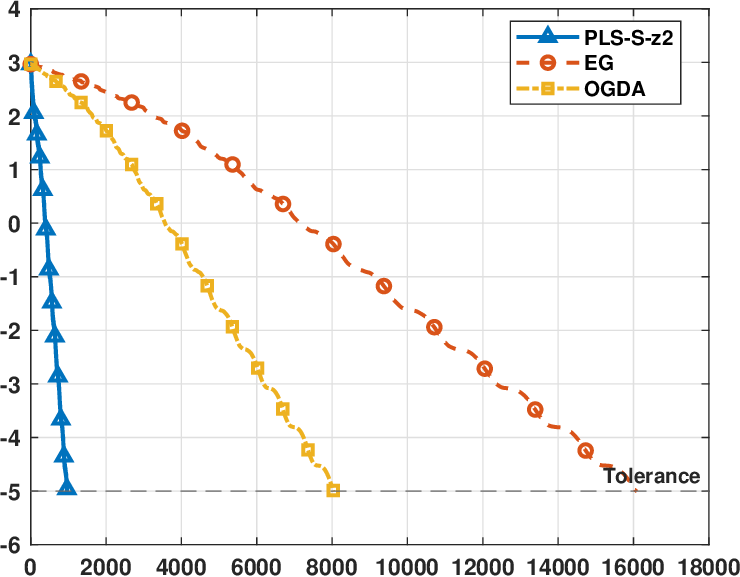}
		&
		\includegraphics[width=0.47\textwidth]
		{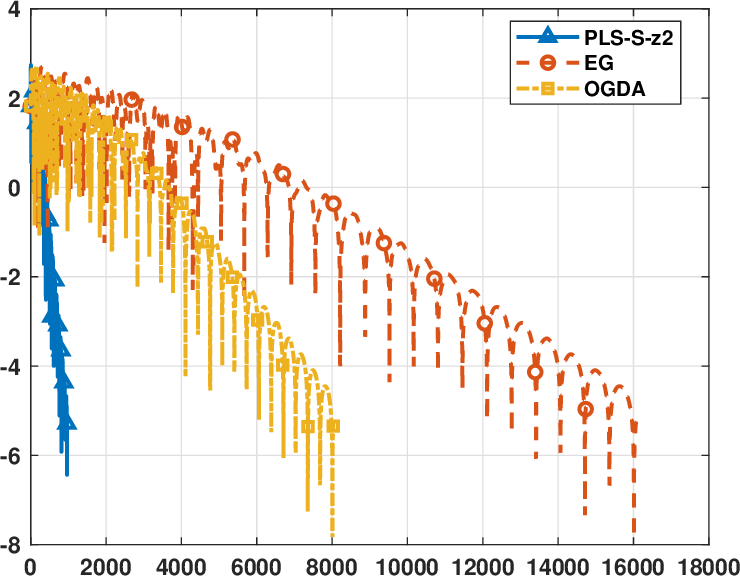}
	\end{tabular}
	\caption{Convergence histories of \texttt{PLS-S-Z2}, \texttt{OGDA}, and \texttt{EG} for representative learning-based test problems from MARL (top row),
	RAL (middle row), and GANs (bottom row).
	The left column reports
	\(\log_{10}\!\left(f\right)\), corresponding to the
	merit function used in the stopping criterion, while the right column
	reports the min--max objective-value error
	\(\log_{10}|\Phi(x)-\Phi(x^*)|\), where \(\Phi\) denotes the objective
function of the underlying min--max problem and \(x^*\) is its known solution.
In all panels, the horizontal axis is the number \texttt{nf} of function evaluations. The horizontal reference level in the left-column panels
	corresponds to the stopping criterion
	\(f\le 10^{-5}\).}
	\label{f.learning}
\end{figure}

Figure~\ref{f.f} complements the mean-efficiency results through
performance profiles. In terms of both \texttt{nf} and \texttt{msec},
the profile of \texttt{PLS-S-Z2} rises more rapidly than those of
\texttt{OGDA} and \texttt{EG} and remains above them over most of the
displayed range. In particular, \texttt{PLS-S-Z2} has the largest
fraction of wins at small performance ratios and reaches high values of
$\rho(\tau)$ substantially earlier. The \texttt{OGDA} profile is
generally stronger than that of \texttt{EG}, especially in terms of
residual evaluations, consistently with the mean efficiencies reported
in Table~\ref{tab:allres}. Since all three methods solve the complete
test set, their profiles eventually reach one, but
\texttt{PLS-S-Z2} exhibits the clearest overall efficiency advantage.

Figure~\ref{f.learning} compares the convergence behavior of
\texttt{PLS-S-Z2}, \texttt{OGDA}, and \texttt{EG} on representative
problems from the three learning-based applications. The left column
reports the NME merit function
$f=\frac12\|E(x)\|^2$, whereas the right column reports the error in the
objective value of the corresponding min--max problem. Thus, the two
columns provide complementary views of the numerical behavior: the former
measures convergence with respect to the nonlinear monotone equation
actually solved by the algorithms, while the latter illustrates the
corresponding evolution of the original min--max objective value.

For the MARL problem, \texttt{PLS-S-Z2} reaches the prescribed NME
stopping tolerance after $2553$ residual evaluations. In contrast,
\texttt{OGDA} and \texttt{EG} both exhaust the maximum budget of $20000$
evaluations without satisfying the prescribed tolerance. Their best NME
merit values are approximately $3.01\times10^{-4}$ and
$1.22\times10^{-3}$, respectively, whereas
\texttt{PLS-S-Z2} reaches $9.99\times10^{-6}$. At the corresponding
best-merit points, the objective-value errors are approximately
$2.34\times10^{-5}$ for \texttt{PLS-S-Z2},
$1.35\times10^{-4}$ for \texttt{OGDA}, and
$4.76\times10^{-4}$ for \texttt{EG}. Thus, on this example,
\texttt{PLS-S-Z2} is the only method among the three that reaches the
prescribed NME accuracy within the available evaluation budget.

The advantage of \texttt{PLS-S-Z2} is even more pronounced for the RAL
problem. It reaches the prescribed NME tolerance after only $156$ residual
evaluations. Both \texttt{OGDA} and \texttt{EG} exhaust the maximum
budget of $20000$ evaluations without satisfying the stopping criterion.
The best NME merit values obtained by \texttt{OGDA} and \texttt{EG} are
approximately $2.22\times10^{-5}$ and $3.85\times10^{-4}$, respectively,
whereas \texttt{PLS-S-Z2} reaches approximately
$9.44\times10^{-6}$. The corresponding objective-value errors are
approximately $1.46\times10^{-4}$ for \texttt{PLS-S-Z2},
$2.09\times10^{-4}$ for \texttt{OGDA}, and
$1.63\times10^{-3}$ for \texttt{EG}. These results again illustrate
that the NME merit function and the original objective-value error provide
different measures of progress and need not decrease at the same rate.

For the GAN problem, all three methods reach the prescribed NME tolerance,
but the required numbers of residual evaluations differ substantially.
The method \texttt{PLS-S-Z2} terminates after $965$ evaluations, compared
with $8035$ for \texttt{OGDA} and $16068$ for \texttt{EG}. Their final
merit values are approximately $9.73\times10^{-6}$,
$9.99\times10^{-6}$, and $9.99\times10^{-6}$, respectively. The
corresponding objective-value errors are approximately
$1.91\times10^{-6}$ for \texttt{PLS-S-Z2} and
$5.65\times10^{-6}$ for both \texttt{OGDA} and \texttt{EG}.
Hence, although all three methods eventually satisfy the prescribed
accuracy on this instance, \texttt{PLS-S-Z2} requires substantially fewer
residual evaluations.

Taken together, the three representative learning problems consistently
favor \texttt{PLS-S-Z2} under the NME stopping criterion. It reaches the
prescribed tolerance on all three examples, whereas the benchmark methods
fail to do so within the evaluation budget on the MARL and RAL examples.
On the GAN problem, where all three methods succeed,
\texttt{PLS-S-Z2} still requires substantially fewer residual evaluations.
At the same time, the objective-value histories demonstrate that the NME
merit function and the original min--max objective error measure different
aspects of the iterative process and therefore need not decrease at
comparable rates. Accordingly, the NME merit function provides the
appropriate basis for assessing convergence of the nonlinear monotone
equation, while the objective-value error serves as a complementary
measure of progress in the underlying learning formulation.

Overall, the results in Table~\ref{tab:allres} and
Figures~\ref{f.f}--\ref{f.learning} consistently identify
\texttt{PLS-S-Z2} as the strongest overall representative among the
methods compared. In the first block of Table~\ref{tab:allres}, it
outperforms the selected fixed-step variants \texttt{PF-S-LS} and
\texttt{PF-S-A4} in both efficiency measures, while in the second block
it substantially outperforms the benchmark methods \texttt{OGDA} and
\texttt{EG}. The representative learning-based experiments lead to the
same qualitative conclusion and further illustrate the practical
advantage of the line-search \texttt{PLS-S-Z2} realization of the
proposed projection-based subspace framework.

\section{Conclusion}

This paper developed a projection-based subspace framework for solving
nonlinear monotone equations. The proposed {\tt PLS-S} and {\tt PF-S} methods combine four
families of Jacobian-free subspace directions with either fixed step sizes or
variable step sizes generated by the projected line search of Solodov and
Svaiter.

The convergence analysis is independent of the specific algebraic form of the
raw subspace directions. Instead, it relies on an enforced residual-based
angle condition, an explicit scaling rule, and projection onto separating
half-spaces. Under monotonicity and Lipschitz continuity, these ingredients
yield global convergence without requiring cocoercivity, global strong
monotonicity, merit functions, or boundedness of the unscaled directions.

For the exact projection variant, we established the best-iterate residual
rate $\min_{0\le k\le\ell}\|E(x_k)\|
=
O(\ell^{-1/2})$. Under a local error bound, the distance to the solution set satisfies $\dist(x_\ell,X^*)
=
o(\ell^{-1/2})$. Under the stronger local Jacobian regularity condition at the limit solution,
which induces the required local error bound and local isolation of the limit
solution, the iterates converge locally $R$-linearly. Consequently, the
last-iterate residual converges geometrically and satisfies $\|E(x_\ell)\|
=
o(\ell^{-1})$. We also derived explicit iteration and residual-evaluation complexity bounds.
The baseline best-iterate guarantee requires $O(\varepsilon^{-2})$ iterations, while the local linear regime yields $O\!\left(\log\varepsilon^{-1}\right)$ pointwise complexity.

The numerical results support the practical effectiveness of both the
line-search \texttt{PLS-S} and fixed-step \texttt{PF-S} realizations of the
proposed framework. Several variants from both classes solved the complete
test set without anomalies. Among the selected representatives,
\texttt{PLS-S-Z2} showed the strongest overall performance, attaining the
highest mean efficiencies in both residual evaluations and computational
time. Among the selected fixed-step variants, \texttt{PF-S-LS} was more
efficient in terms of residual evaluations, whereas \texttt{PF-S-A4}
attained the higher time efficiency. The direct comparison with
\texttt{EG} and \texttt{OGDA} further favored \texttt{PLS-S-Z2}, which
achieved substantially larger mean efficiencies in both residual evaluations
and computational time while all three methods solved the complete test set.
The performance profiles likewise showed a clear practical advantage for
\texttt{PLS-S-Z2}. Finally, on the representative MARL, RAL, and GAN
problems, \texttt{PLS-S-Z2} reached the prescribed NME stopping tolerance
on all three instances. On the MARL and RAL problems, both \texttt{EG} and
\texttt{OGDA} exhausted the maximum evaluation budget without reaching the
prescribed tolerance, whereas on the GAN problem \texttt{PLS-S-Z2} required
substantially fewer residual evaluations than both benchmark methods. The
experiments also confirm that the NME merit function and the objective-value
error measure different aspects of the iterative process and need not
decrease at comparable rates.

\appendix

\section{Appendix: Projection inequality (exact projection)}
\label{app:Relaxed}

In this appendix, we recall the classical descent property of the metric
projection onto a nonempty closed convex set in a Hilbert space. 
The result follows directly from the projection characterization 
\cite[Theorem~3.16]{Bauschke2017}, the polarization identity 
\cite[Lemma~2.12]{Bauschke2017}, and the firm nonexpansiveness of projectors 
\cite[Proposition~4.16]{Bauschke2017}. 
For completeness, we provide a self-contained proof.

Let $C \subset H$ be nonempty, closed, and convex. 
For $x \in H$, we denote by $P_C(x)$ the (unique) metric projection of $x$ onto $C$, i.e., $P_C(x) := \arg\min_{y \in C} \|x-y\|$. Existence and uniqueness are guaranteed by 
\cite[Theorem~3.16]{Bauschke2017}.

\begin{prop}[Projection inequality]\label{p.Projection} 
Let $H$ be a real Hilbert space and let $C \subset H$ be nonempty, closed, and convex. 
Let $x \in H$ and define $x^{+} := P_C(x)$. Then for every $z \in C$ it holds $\|x^{+} - z\|^2
\le
\|x - z\|^2
-
\|x - x^{+}\|^2$.
\end{prop}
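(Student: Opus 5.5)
The plan is to use the variational characterization of the projection and then expand a squared norm. First, by \cite[Theorem~3.16]{Bauschke2017}, the point $x^{+}=P_C(x)$ exists, is unique, and is characterized by the obtuse-angle condition
\[
\langle x - x^{+},\, z - x^{+}\rangle \le 0 \qquad \text{for all } z\in C .
\]
If one wants the argument fully self-contained, this condition can be rederived. For $z\in C$ and $t\in(0,1]$, convexity gives $x^{+}+t(z-x^{+})\in C$. Minimality of $x^{+}$ then yields
\[
\|x-x^{+}\|^2 \le \|x-x^{+}-t(z-x^{+})\|^2 .
\]
Expanding the right-hand side, dividing by $t$, and letting $t\downarrow 0$ gives the inequality.

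Second, for $z\in C$, write $x-z=(x-x^{+})+(x^{+}-z)$ and expand using the polarization identity:
\[
\|x-z\|^2=\|x-x^{+}\|^2+\|x^{+}-z\|^2+2\langle x-x^{+},\,x^{+}-z\rangle .
\]
By the characterization above, the cross term equals $-2\langle x-x^{+},\,z-x^{+}\rangle$, which is nonnegative. Dropping it gives
\[
\|x-z\|^2\ge \|x-x^{+}\|^2+\|x^{+}-z\|^2 ,
\]
and rearranging yields the claimed inequality $\|x^{+}-z\|^2\le\|x-z\|^2-\|x-x^{+}\|^2$.

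There is no real obstacle here. The only step needing care is the limiting argument in the variational characterization: one must keep $t\in(0,1]$ so that convexity applies, and then discard the $t^2$ term, which is harmless as $t\downarrow 0$. Closedness is needed only for existence of the projection, which we take from \cite{Bauschke2017}.

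For the application in \eqref{eq:fejer-descent}, $C=\ol H_\ell$ is a closed half-space. It is nonempty and contains $X^*$ by \eqref{e.XH}, so the proposition applies directly with $z=x^*$. Taking the infimum over $x^*\in X^*$, which is closed as the zero set of the continuous map $E$, gives the bound stated for $d_{\ell+1}^2$.
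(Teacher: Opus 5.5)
Your proposal is correct and matches the paper's proof: both invoke the variational characterization $\langle z-x^{+},x-x^{+}\rangle\le 0$ from Bauschke--Combettes, expand $\|x-z\|^2$ around $x^{+}$, and drop the nonnegative cross term. Your self-contained derivation of that characterization and your infimum-over-$X^*$ remark for the distance bound are valid extras that the paper does not spell out.
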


\begin{proof}
We have $x^{+}=P_C(x)$. 
So, by \cite[Theorem~3.16]{Bauschke2017}, 
\begin{equation}\label{e.projAp}
 x^{+} \in C 
\quad\text{and}\quad
\langle z - x^{+}, x - x^{+}\rangle \le 0
\quad \forall z \in C.   
\end{equation}
For every $z\in C$, we have
\[
\begin{aligned}
\|x-z\|^2
&=
\|x-x^+\|^2
+
\|x^+-z\|^2
+
2\langle x-x^+,x^+-z\rangle
\ge
\|x-x^+\|^2
+
\|x^+-z\|^2,
\end{aligned}
\]
because \eqref{e.projAp} gives $\langle x-x^+,x^+-z\rangle\ge0$. Rearranging yields the claimed inequality. \qed
\end{proof}

\section{Appendix: Subspace parameter choices}\label{app:table}

This appendix summarizes the parameter choices used in the various
{\tt PLS-S} and  {\tt PF-S} implementations. For clarity, the methods are grouped
according to the underlying subspace direction structure. Throughout
this appendix, we use the auxiliary quantities defined in \eqref{e.sy},
namely,
\begin{equation}\label{e.aux}
y_{\ell-1}
=
E(\bar x_{\ell-1})-E(x_{\ell-1}),
\qquad
s_{\ell-1}
=
\bar x_{\ell-1}-x_{\ell-1}=\mu_{\ell-1}p^{\scal}_{\ell-1}.
\end{equation}
The tables below provide a structured overview of all parameter variants
used in the numerical experiments. Organizing the methods according to
their direction structure highlights their algebraic relationships while
preserving clarity and readability.

\subsection{{\tt JFS2}-based methods}

For this group of subspace directions, define $\bar y_{\ell-1}
=
y_{\ell-1}
-
2\D\frac{\|y_{\ell-1}\|^2}{y_{\ell-1}^Tp_{\ell-1}}p_{\ell-1}$ and $\wt y_{\ell-1}
=
y_{\ell-1}
-t p_{\ell-1}$ with $t=0.1$.

\begin{table}[H]
\centering
\caption{JFS2-based methods and corresponding parameters.}
\label{tab:jfs2}
\renewcommand{\arraystretch}{1.3} 
\begin{tabular}{l c}
\hline
\textbf{Method \& Direction} & $\boldsymbol{\beta_\ell}$ \\ \hline
{\tt PLS-S-FR} and {\tt PF-S-FR} / {\tt JFS2dir-FR} & $\frac{\|E(x_\ell)\|^2}{\|E(x_{\ell-1})\|^2}$ \\[1ex]
{\tt PLS-S-PR} and {\tt PF-S-PR} / {\tt JFS2dir-PR} & $\frac{y_{\ell-1}^T E(x_\ell)}{\|E(x_{\ell-1})\|^2}$ \\[1ex]
{\tt PLS-S-HS} and {\tt PF-S-HS} / {\tt JFS2dir-HS} & $\frac{y_{\ell-1}^T E(x_\ell)}{y_{\ell-1}^T p_{\ell-1}}$ \\[1ex]
{\tt PLS-S-DY} and {\tt PF-S-DY} / {\tt JFS2dir-DY} & $\frac{\|E(x_\ell)\|^2}{y_{\ell-1}^T p_{\ell-1}}$ \\[1ex]
{\tt PLS-S-LS} and {\tt PF-S-LS} / {\tt JFS2dir-LS} & $-\frac{E(x_\ell)^T y_{\ell-1}}{E(x_{\ell-1})^T p_{\ell-1}}$ \\[1ex]
{\tt PLS-S-DL} and {\tt PF-S-DL} / {\tt JFS2dir-DL} & $\frac{E(x_\ell)^T \bar y_{\ell-1}}{y_{\ell-1}^T p_{\ell-1}}$ \\[1ex]
{\tt PLS-S-HZ} and {\tt PF-S-HZ} / {\tt JFS2dir-HZ} & $\frac{E(x_\ell)^T\wt y_{\ell-1}}{y_{\ell-1}^T p_{\ell-1}}$ \\ \hline
\end{tabular}
\end{table}

\subsection{{\tt JFS3dir1}-based methods}

\begin{table}[htbp]
\centering
\caption{{\tt JFS3dir1}-based methods.}
\label{tab:jfs3dir1}
\renewcommand{\arraystretch}{1.3} 
\begin{tabular}{l c c}
\hline
\textbf{Method \& Direction} & $\boldsymbol{\beta^1_\ell}$ & $\boldsymbol{\beta^2_\ell}$ \\ \hline
{\tt PLS-S-Z1} and {\tt PF-S-Z1} / {\tt JFS3dir1-Z1} & 
$\displaystyle \frac{y_{\ell-1}^T E(x_\ell)}{\|E(x_{\ell-1})\|^2}$ & 
$\displaystyle -\frac{p_{\ell-1}^T E(x_\ell)}{\|E(x_{\ell-1})\|^2}$ \\[2ex]

{\tt PLS-S-Z2} and {\tt PF-S-Z2} / {\tt JFS3dir1-Z2} & 
$\displaystyle \frac{y_{\ell-1}^T E(x_\ell)}{y_{\ell-1}^T p_{\ell-1}}$ & 
$\displaystyle -\frac{p_{\ell-1}^T E(x_\ell)}{y_{\ell-1}^T p_{\ell-1}}$ \\[1ex] \hline
\end{tabular}
\end{table}

\subsection{{\tt JFS3dir2}-based methods}

Define $\phi_{\ell-1}
=
\D\frac{\|y_{\ell-1}\|^2}{y_{\ell-1}^Ts_{\ell-1}}$, 
$\phi_{\ell-1}^{1,j} = 1 + j\phi_{\ell-1}$ for $j=1,2$, 
$\phi_{\ell-1}^2 = 1 - \min\{1,\phi_{\ell-1}\}$.

\begin{table}[htbp]
\centering
\caption{{\tt JFS3dir2}-based methods.}
\label{tab:jfs3dir2}
\renewcommand{\arraystretch}{1.3} 
\begin{tabular}{l c c}
\hline
\textbf{Method \& Direction} & $\boldsymbol{\beta^3_\ell}$ & $\boldsymbol{\beta^4_\ell}$ \\ \hline

{\tt PLS-S-An1} and {\tt PF-S-An1} / {\tt JFS3dir2-An1} & 
$\displaystyle -\frac{s_{\ell-1}^T E(x_\ell)}{y_{\ell-1}^T s_{\ell-1}}$ & 
$\displaystyle \phi_{\ell-1}^{1,1} \beta^3_\ell + \frac{y_{\ell-1}^T E(x_\ell)}{y_{\ell-1}^T s_{\ell-1}}$ \\[2.5ex]

{\tt PLS-S-An2} and {\tt PF-S-An2} / {\tt JFS3dir2-An2} & 
$\displaystyle -\frac{s_{\ell-1}^T E(x_\ell)}{y_{\ell-1}^T s_{\ell-1}}$ & 
$\displaystyle \phi_{\ell-1}^{1,2} \beta^3_\ell + \frac{y_{\ell-1}^T E(x_\ell)}{y_{\ell-1}^T s_{\ell-1}}$ \\[2.5ex]

{\tt PLS-S-De} and {\tt PF-S-De} / {\tt JFS3dir2-De} & 
$\displaystyle -\frac{s_{\ell-1}^T E(x_\ell)}{y_{\ell-1}^T s_{\ell-1}}$ & 
$\displaystyle \phi_{\ell-1}^2 \beta^3_\ell + \frac{y_{\ell-1}^T E(x_\ell)}{y_{\ell-1}^T s_{\ell-1}}$ \\[1.5ex] \hline
\end{tabular}
\end{table}

\subsection{{\tt JFS3dir3}-based methods}

For the {\tt JFS3dir3} family, let $\overline{\gamma}>0$, ${\tt Re}
=
\frac{
E(x_\ell)^T p_{\ell-1}
}{
\|E(x_{\ell-1})\|\,\|p_{\ell-1}\|
}$, 
and let
\[
\beta^5_{\ell}
=
-\frac{
(\gamma_{\ell}-1)\|E(x_\ell)\|^2
+
\beta_{\ell}E(x_\ell)^T p_{\ell-1}
}{
E(x_\ell)^T h_\ell
}.
\]
Here, $\gamma_{\ell}
:=
\max\bigl\{
\varsigma_1,
\min\{\varsigma_2,\bar\gamma_{\ell}\}
\bigr\}$ and 
$0<\varsigma_1\le1<\varsigma_2$.

\begin{table}[htbp]
\centering
\caption{{\tt JFS3dir3}-based methods.}
\label{tab:jfs3dir3}
\renewcommand{\arraystretch}{1.2} 
\begin{tabular}{l c}
\hline
\textbf{Method \& Direction} & $\boldsymbol{\bar\gamma_\ell}$ \\ \hline
{\tt PLS-S-A1} and {\tt PF-S-A1} / {\tt JFS3dir3-A1} & $1-\overline{\gamma}|\beta_\ell{\tt Re}|$ \\[0.5ex]
{\tt PLS-S-A2} and {\tt PF-S-A2} / {\tt JFS3dir3-A2} & $1+\overline{\gamma}|\beta_\ell{\tt Re}|$ \\[0.5ex]
{\tt PLS-S-A3} and {\tt PF-S-A3} / {\tt JFS3dir3-A3} & $1-\overline{\gamma}\beta_\ell{\tt Re}$ \\[0.5ex]
{\tt PLS-S-A4} and {\tt PF-S-A4} / {\tt JFS3dir3-A4} & $1+\overline{\gamma}\beta_\ell{\tt Re}$ \\[0.5ex]
{\tt PLS-S-A5} and {\tt PF-S-A5} / {\tt JFS3dir3-A5} & $1-\overline{\gamma}|{\tt Re}|$ \\[0.5ex]
{\tt PLS-S-A6} and {\tt PF-S-A6} / {\tt JFS3dir3-A6} & $1+\overline{\gamma}|{\tt Re}|$ \\[0.5ex]
{\tt PLS-S-A7} and {\tt PF-S-A7} / {\tt JFS3dir3-A7} & $1-\overline{\gamma}{\tt Re}$ \\[0.5ex]
{\tt PLS-S-A8} and {\tt PF-S-A8} / {\tt JFS3dir3-A8} & $1+\overline{\gamma}{\tt Re}$ \\ \hline
\end{tabular}
\end{table}

\subsection{Appendix: Well-definedness of subspace parameters}
\label{app:subparam}

Before addressing the well-definedness of the subspace updating parameters, we
briefly describe the structure of the coefficients $\beta_\ell$ and
$\beta_\ell^i$, $i=1,\ldots,5$, summarized in
Tables~\ref{tab:jfs2}--\ref{tab:jfs3dir3}. Although these coefficients appear
in several algebraic forms, they are constructed from a small collection of
quantities associated with two successive iterations, namely the residuals
$E(x_\ell)$ and $E(x_{\ell-1})$, the displacement $s_{\ell-1}$, the previous
direction $p_{\ell-1}$, and the residual difference $y_{\ell-1}$. Consequently,
several coefficients contain common denominators, such as
$\|E(x_{\ell-1})\|^2$, $y_{\ell-1}^Tp_{\ell-1}$, and
$y_{\ell-1}^Ts_{\ell-1}$.

The following result shows that these coefficients are well defined at every
iteration at which the algorithm has not terminated, provided that the
specified safeguarding rule is applied whenever a curvature denominator
vanishes.

\begin{prop}\label{pr3}
Suppose that iteration $\ell-1$ is nonstationary, so that $\|E(x_{\ell-1})\|>\eps$. Assume that every occurrence of $y_{\ell-1}$ in a denominator is replaced, whenever necessary, by
the safeguarded vector defined below so that $s_{\ell-1}^Ty_{\ell-1}\neq0$ and $y_{\ell-1}^Tp_{\ell-1}\neq0$, and that the auxiliary vector $h_\ell$ in $\beta_{\ell}^5$ is selected so that every denominator
involving $h_\ell$ is nonzero. Then all subspace coefficients
$\beta_\ell$ and $\beta_\ell^i$, $i=1,\ldots,5$, appearing in
Tables~\ref{tab:jfs2}--\ref{tab:jfs3dir3} are well-defined.
\end{prop}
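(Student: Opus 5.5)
The plan is to list every denominator in Tables~\ref{tab:jfs2}--\ref{tab:jfs3dir3} and show that each is nonzero under the hypotheses. Once that is done, every coefficient is a finite real number: it is built from finitely many inner products, norms and quotients with nonzero denominators. The clipping to $[\beta_{\min},\beta_{\max}]$ in \eqref{e.P1} then also gives finiteness. The denominators fall into four types:
\begin{itemize}
\item[(a)] $\|E(x_{\ell-1})\|^2$ and $\|E(x_{\ell-1})\|\,\|p_{\ell-1}\|$, which appear in FR, PR, Z1 and in ${\tt Re}$;
\item[(b)] the curvature quantities $y_{\ell-1}^Tp_{\ell-1}$ and $y_{\ell-1}^Ts_{\ell-1}$, which appear in HS, DY, DL, HZ, Z2, An1, An2, De, in $\bar y_{\ell-1}$ and in $\phi_{\ell-1}$;
\item[(c)] $E(x_{\ell-1})^Tp_{\ell-1}$, which appears in LS;
\item[(d)] $E(x_\ell)^Th_\ell$, which appears in $\beta^5_\ell$.
\end{itemize}

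For type (a), nonstationarity gives $\|E(x_{\ell-1})\|>\eps>0$. Also $p_{\ell-1}\neq0$, because Step~(S5) replaces a zero candidate by $-E(x_{\ell-1})\neq0$, and the angle modification of Proposition~\ref{p.angle} and the scaling \eqref{e.scaleDir} keep the direction nonzero; note $\eta_{\ell-1}>0$ since the median is at least $\zeta\|E(x_{\ell-1})\|>0$. For type (c), the direction $p_{\ell-1}$ used to build the trial point satisfies the angle condition \eqref{e.rac}. Hence $E(x_{\ell-1})^Tp_{\ell-1}\le-\Delta_{\angle}\|p_{\ell-1}\|\,\|E(x_{\ell-1})\|<0$. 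This uses the convention that $p_{\ell-1}$ in the tables denotes the accepted (scaled) direction, which I take from Step~(S5) and \eqref{e.aux}.

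For type (b), I first try to get positivity for free from monotonicity. We have $s_{\ell-1}=\mu_{\ell-1}p^{\scal}_{\ell-1}$, so
\[
y_{\ell-1}^Ts_{\ell-1}=\bigl(E(\bar x_{\ell-1})-E(x_{\ell-1})\bigr)^T(\bar x_{\ell-1}-x_{\ell-1})\ge0 .
\]
Monotonicity gives only nonnegativity, not strict positivity, so the explicit safeguard is what the argument relies on. The safeguard replaces $y_{\ell-1}$ in the denominator by a vector with $s_{\ell-1}^Ty_{\ell-1}\neq0$ and $y_{\ell-1}^Tp_{\ell-1}\neq0$. One concrete construction is $y_{\ell-1}+\nu s_{\ell-1}$ with $\nu>0$: this makes the product at least $\nu\|s_{\ell-1}\|^2>0$, because $s_{\ell-1}\neq0$ as $\mu_{\ell-1}>0$ and $p^{\scal}_{\ell-1}\neq0$. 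Since $p_{\ell-1}$ is a positive multiple of $s_{\ell-1}$, the same bound handles $y^T_{\ell-1}p_{\ell-1}$. The same nonvanishing makes $\bar y_{\ell-1}$ and $\phi_{\ell-1}$ finite, and with them $\phi^{1,j}_{\ell-1}$, $\phi^2_{\ell-1}$ and the compound $\beta^4_\ell$. For type (d), $E(x_\ell)^Th_\ell\neq0$ is assumed directly in the proposition, and $\gamma_\ell$ is a clipped real number, so $\beta^5_\ell$ is finite.

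The main obstacle is type (b): the strict inequality cannot come from monotonicity alone. I therefore need to state the safeguarded vector explicitly and check the two quantities separately, keeping track of the identity $s_{\ell-1}=\mu_{\ell-1}p^{\scal}_{\ell-1}$ versus the unscaled $p_{\ell-1}$. After that, well-definedness of all $\beta_\ell,\beta^i_\ell$ follows by composing finitely many well-defined operations.
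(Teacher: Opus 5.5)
Your proof is correct. Its skeleton matches the paper's: $\|E(x_{\ell-1})\|^2>0$ from nonstationarity, $s_{\ell-1}=\mu_{\ell-1}p^{\scal}_{\ell-1}\neq0$, the shift $y_{\ell-1}+\varphi s_{\ell-1}$, and $E(x_\ell)^Th_\ell\neq0$ by hypothesis.

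The routes differ in how the two curvature conditions are secured.
\begin{itemize}
\item \emph{The paper's route.} It applies the shift only when $s_{\ell-1}^Ty_{\ell-1}=0$, which gives exactly $\varphi\|s_{\ell-1}\|^2$ with no appeal to monotonicity. It then handles $y_{\ell-1}^Tp_{\ell-1}$ by a second correction $\widetilde y_{\ell-1}=y^{(1)}_{\ell-1}+\psi_{\ell-1}\|s_{\ell-1}\|u_{\ell-1}/\|u_{\ell-1}\|$, where $u_{\ell-1}$ is the component of $p_{\ell-1}$ orthogonal to $s_{\ell-1}$. This makes the inner product with $p_{\ell-1}$ nonzero without disturbing $s_{\ell-1}^T\widetilde y_{\ell-1}$.
\item \emph{Your route.} You use $s_{\ell-1}^Ty_{\ell-1}\ge0$ to get strict positivity after the shift. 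You then observe that the post-safeguard $p_{\ell-1}$, and its scaled version, is a positive multiple of $s_{\ell-1}$. Hence $y_{\ell-1}^Tp_{\ell-1}\neq0$ is inherited directly from $y_{\ell-1}^Ts_{\ell-1}\neq0$.
\end{itemize}

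The paper's own argument shows that its $u_{\ell-1}$-branch is entered only when $p_{\ell-1}$ is not collinear with $s_{\ell-1}$. Under the algorithm's notation (the modified direction is again denoted $p_\ell$, and $p^{\scal}_\ell=\eta_\ell p_\ell$ with $\eta_\ell>0$), that branch is therefore vacuous. What it buys is robustness: it still works if the $p_{\ell-1}$ in the tables were read as a vector not parallel to $s_{\ell-1}$, such as the raw recursion output. Your version is shorter but depends on the post-safeguard convention, which you state and justify.

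You also treat two denominators explicitly that the paper covers only with ``the remaining denominators \dots\ are therefore nonzero.'' These are the Liu--Storey denominator $E(x_{\ell-1})^Tp_{\ell-1}$ and the factor $\|E(x_{\ell-1})\|\,\|p_{\ell-1}\|$ in ${\tt Re}$. Your angle-condition bound $E(x_{\ell-1})^Tp_{\ell-1}\le-\Delta_{\angle}\|p_{\ell-1}\|\,\|E(x_{\ell-1})\|<0$ fills that step. It rests on the same post-safeguard convention, so your argument is internally consistent on this point.
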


\begin{proof}
If $\|E(x_{\ell-1})\|
\le
\eps$, then the stopping criterion is satisfied and no new subspace coefficient is
required. We therefore consider an iteration for which $\|E(x_{\ell-1})\|
>
\eps$. In particular, $\|E(x_{\ell-1})\|^2
>0$, so every denominator containing $\|E(x_{\ell-1})\|^2$ is strictly positive.

Since the iteration is nonstationary, the fallback and scaling rules imply $p_{\ell-1}\neq0$ and $p_{\ell-1}^{\scal}\neq0$. By \eqref{e.xbar}, $s_{\ell-1}
=
\bar x_{\ell-1}-x_{\ell-1}
=
\mu_{\ell-1}p_{\ell-1}^{\scal}$. For the line-search step-size implementation,
Lemma~\ref{le:lowerstep} gives
$\mu_{\ell-1}
\ge
\mu_{\min}
>
0$. For the fixed step-size implementation, $\mu_{\ell-1}
=
\alpha
>
0$. Hence, $s_{\ell-1}\neq0$.
If $s_{\ell-1}^Ty_{\ell-1}\neq0$, set $y_{\ell-1}^{(1)}:=y_{\ell-1}$. Otherwise, define
$y_{\ell-1}^{(1)}
:=
y_{\ell-1}
+
\varphi_{\ell-1}s_{\ell-1}$, where $0<\varphi_{\ell-1}\le\overline\varphi$ for some constant $\overline\varphi>0$. Since $s_{\ell-1}\neq0$,
\[
s_{\ell-1}^Ty_{\ell-1}^{(1)}
=
s_{\ell-1}^Ty_{\ell-1}
+
\varphi_{\ell-1}\|s_{\ell-1}\|^2
=
\varphi_{\ell-1}\|s_{\ell-1}\|^2
>
0.
\]
If $(y_{\ell-1}^{(1)})^Tp_{\ell-1}\neq0$, set $\widetilde y_{\ell-1}:=y_{\ell-1}^{(1)}$. Otherwise, define
$u_{\ell-1}
:=
p_{\ell-1}
-
\frac{s_{\ell-1}^Tp_{\ell-1}}{\|s_{\ell-1}\|^2}s_{\ell-1}$.
If $u_{\ell-1}=0$, then $p_{\ell-1}$ is a nonzero multiple of $s_{\ell-1}$, which together with $s_{\ell-1}^Ty_{\ell-1}^{(1)}\neq0$ would imply $(y_{\ell-1}^{(1)})^Tp_{\ell-1}\neq0$, a contradiction. Hence, $u_{\ell-1}\neq0$. Define
$\widetilde y_{\ell-1}
:=
y_{\ell-1}^{(1)}
+
\psi_{\ell-1}\|s_{\ell-1}\|
u_{\ell-1}/\|u_{\ell-1}\|$, where $0<\psi_{\ell-1}\le\overline\psi$ for some constant $\overline\psi>0$. Since $s_{\ell-1}^Tu_{\ell-1}=0$,
\[
s_{\ell-1}^T\widetilde y_{\ell-1}
=
s_{\ell-1}^Ty_{\ell-1}^{(1)}
\neq0.
\]
Moreover, since $(y_{\ell-1}^{(1)})^Tp_{\ell-1}=0$ and $p_{\ell-1}^Tu_{\ell-1}=\|u_{\ell-1}\|^2$,
\[
\widetilde y_{\ell-1}^Tp_{\ell-1}
=
\psi_{\ell-1}\|s_{\ell-1}\|\|u_{\ell-1}\|
>
0.
\]
Therefore, after replacing $y_{\ell-1}$ by
$\widetilde y_{\ell-1}$, all denominators involving
$y_{\ell-1}^Ts_{\ell-1}$ or $y_{\ell-1}^Tp_{\ell-1}$ are nonzero.

The remaining denominators in the definitions of
$\beta_\ell$ and $\beta_\ell^i$, $i=1,\ldots,4$, are therefore nonzero.
For $\beta_\ell^5$, the auxiliary vector $h_\ell$ is selected so that the
denominator appearing in its definition is nonzero. In particular, whenever
that denominator contains $E(x_\ell)^Th_\ell$, we require $E(x_\ell)^Th_\ell
\neq
0$. Thus, all coefficients $\beta_\ell$ and $\beta_\ell^i$,
$i=1,\ldots,5$, are well-defined. For notational simplicity, after safeguarding we replace
$y_{\ell-1}$ by $\widetilde y_{\ell-1}$ and continue to denote the resulting
vector by $y_{\ell-1}$. \qed
\end{proof}

\begin{rem}
The safeguarding modification is used only when a curvature denominator
vanishes. It preserves the Jacobian-free structure of the method and ensures
that the corresponding subspace coefficients are well defined. Because the
modified vector may enter the construction of the next search direction, the
safeguard can change subsequent trial points and iterates; it should therefore
be regarded as part of the direction-generation rule rather than as a purely
notational modification. Moreover,
$\|\widetilde y_{\ell-1}-y_{\ell-1}\|
\le
(\overline\varphi+\overline\psi)\|s_{\ell-1}\|$. Since $\|s_{\ell-1}\|=
\|\bar x_{\ell-1}-x_{\ell-1}\|
\to
0$ by Lemma~\ref{le:bo}, it follows that $\|\widetilde y_{\ell-1}-y_{\ell-1}\|
\to
0$. Hence, the safeguarding perturbation vanishes asymptotically. The global
convergence and rate analyses remain applicable provided that the safeguarded
directions continue to satisfy the scaling and angle conditions imposed by
Algorithm \ref{a.FDS}.
\end{rem}
\end{sloppypar}

\begin{thebibliography}{10}

\bibitem{aho}
M.~Ahookhosh, K.~Amini, and S.~Bahrami.
\newblock Two derivative-free projection approaches for systems of large-scale
  nonlinear monotone equations.
\newblock {\em Numer. Algorithms\/} {\bf 64} (2013), 21--42.

\bibitem{al2}
M.~Al-Baali, A.~Caliciotti, G.~Fasano, and M.~Roma.
\newblock Exploiting damped techniques for nonlinear conjugate gradient
  methods.
\newblock {\em Math. Methods Oper. Res.\/} {\bf 86} (2017), 501--522.

\bibitem{al1}
M.~Al-Baali, Y.~Narushima, and H.~Yabe.
\newblock A family of three-term conjugate gradient methods with sufficient
  descent property for unconstrained optimization.
\newblock {\em Comput. Optim. Appl.\/} {\bf 60} (2015), 89--110.

\bibitem{Andrei22013}
N.~Andrei.
\newblock On three-term conjugate gradient algorithms for unconstrained
  optimization.
\newblock {\em Appl. Math. Comput.\/} {\bf 219} (February 2013), 6316--6327.

\bibitem{Andrei12013}
N.~Andrei.
\newblock A simple three-term conjugate gradient algorithm for unconstrained
  optimization.
\newblock {\em J. Comput. Appl. Math.\/} {\bf 241} (March 2013), 19--29.

\bibitem{antipin1997equilibrium}
A.~S Antipin.
\newblock Equilibrium programming: proximal methods.
\newblock {\em Comput. Math. Math. Phys\/} {\bf 37} (1997), 1285--1296.

\bibitem{audet2006mads}
C.~Audet and J.~E. Dennis.
\newblock Mesh adaptive direct search algorithms for constrained optimization.
\newblock {\em SIAM Journal on Optimization\/} {\bf 17} (2006), 188--217.

\bibitem{bab1}
S.~Babaie-Kafaki.
\newblock Two modified scaled nonlinear conjugate gradient methods.
\newblock {\em J. Comput. Appl. Math\/} {\bf 261} (2014), 172--182.

\bibitem{bab2}
S.~Babaie-Kafaki and R.~Ghanbari.
\newblock Two optimal {D}ai--{L}iao conjugate gradient methods.
\newblock {\em Optimization\/} {\bf 64} (2015), 2277--2287.

\bibitem{Bauschke2017}
H.~H. Bauschke and P.~L. Combettes.
\newblock {\em Convex Analysis and Monotone Operator Theory in Hilbert Spaces}.
\newblock Springer International Publishing  (2017).

\bibitem{bohm}
A.~Bohm, M.~Sedlmayer, E.~R. Csetnek, and R.~I. Bot.
\newblock Two steps at a time---taking gan training in stride with tseng's
  method.
\newblock {\em SIAM Journal on Mathematics of Data Science\/} {\bf 4} (2022),
  750--771.

\bibitem{Bo2023}
R.~I. Bo{\c{t}}, E.~R. Csetnek, and D.~K. Nguyen.
\newblock Fast optimistic gradient descent ascent ({OGDA}) method in continuous
  and discrete time.
\newblock {\em Foundations of Computational Mathematics\/} {\bf 25} (November
  2023), 163--222.

\bibitem{BS}
A.~Bouaricha and R.~B. Schnabel.
\newblock Tensor methods for large sparse systems of nonlinear equations.
\newblock {\em Math. Program.\/} {\bf 82} (August 1998), 377--400.

\bibitem{BKL}
S.~Buhmiler, N.~Kreji{\'{c}}, and Z.~Lu{\v{z}}anin.
\newblock Practical quasi-newton algorithms for singular nonlinear systems.
\newblock {\em Numer. Algorithms\/} {\bf 55} (February 2010), 481--502.

\bibitem{chavdarova2021last}
T.~Chavdarova, M.~I. Jordan, and M.~Zampetakis.
\newblock Last-iterate convergence of saddle point optimizers via
  high-resolution differential equations.
\newblock {\em arXiv preprint arXiv:2112.13826\/} (2021).

\bibitem{conn2009dfo}
A.~R. Conn, K.~Scheinberg, and L.~N. Vicente.
\newblock {\em Introduction to Derivative-Free Optimization}.
\newblock SIAM  (2009).

\bibitem{dl}
H.~Y. Dai and L.~Z. Liao.
\newblock New conjugacy conditions and related nonlinear conjugate gradient
  methods.
\newblock {\em App Math Optim\/} {\bf 43} (2001), 87--101.

\bibitem{dai99}
Y.~H. Dai and Y.~Yuan.
\newblock A nonlinear conjugate gradient method with a strong global
  convergence property.
\newblock {\em SIAM J. Optim.\/} {\bf 10} (1999), 177--182.

\bibitem{dai}
Z.~Dai and H.~Zhu.
\newblock A modified {H}estenes-{S}tiefel-type derivative-free method for
  large-scale nonlinear monotone equations.
\newblock {\em Mathematics\/} {\bf 8} (2020), 168.

\bibitem{Deng2015}
S.~Deng and Z.~Wan.
\newblock A three-term conjugate gradient algorithm for large-scale
  unconstrained optimization problems.
\newblock {\em Appl Numer Math\/} {\bf 92} (June 2015), 70--81.

\bibitem{deuflhard2011newton}
P.~Deuflhard.
\newblock {\em Newton methods for nonlinear problems: affine invariance and
  adaptive algorithms}, Vol.~35.
\newblock Springer Science \& Business Media  (2011).

\bibitem{DolM}
E.~D. Dolan and J.~J. Mor{\'{e}}.
\newblock Benchmarking optimization software with performance profiles.
\newblock {\em Math. Program.\/} {\bf 91} (January 2002), 201--213.

\bibitem{flre}
R.~Fletcher and C.~M. Reeves.
\newblock Function minimization by conjugate gradients.
\newblock {\em The computer journal\/} {\bf 7} (1964), 149--154.

\bibitem{goodfellow2020generative}
I.~Goodfellow, J.~Pouget-Abadie, M.~Mirza, B.~Xu, D.~Warde-Farley, S.~Ozair,
  A.~Courville, and Y.~Bengio.
\newblock Generative adversarial networks.
\newblock {\em Communications of the ACM\/} {\bf 63} (2020), 139--144.

\bibitem{gorbunov2022stochastic}
E.~Gorbunov, H.~Berard, G.~Gidel, and N.~Loizou.
\newblock Stochastic extragradient: General analysis and improved rates.
\newblock In {\em International Conference on Artificial Intelligence and
  Statistics}, pp. 7865--7901. {PMLR}  (2022).

\bibitem{griewank1981modification}
A.~Griewank.
\newblock The modification of {N}ewton’s method for unconstrained
  optimization by bounding cubic terms.
\newblock Technical report, Technical report NA/12  (1981).

\bibitem{HZ0}
W.~W. Hager and H.~Zhang.
\newblock A survey of nonlinear conjugate gradient methods.
\newblock {\em Pac. J. Optim.\/} {\bf 2} (2006), 35--58.

\bibitem{halpern1967fixed}
B.~Halpern.
\newblock Fixed points of nonexpanding maps.
\newblock {\em Bulletin of the American Mathematical Society\/} {\bf 73}
  (1967), 957--961.

\bibitem{hes}
M.~R. Hestenes, E.~Stiefel, et~al.
\newblock {\em Methods of conjugate gradients for solving linear systems},
  Vol.~49.
\newblock NBS Washington, DC  (1952).

\bibitem{HP}
A.~H. Ibrahim, P.~Kumam, A.~B. Abubakar, W.~Jirakitpuwapat, and J.~Abubakar.
\newblock A hybrid conjugate gradient algorithm for constrained monotone
  equations with application in compressive sensing.
\newblock {\em Heliyon\/} {\bf 6} (March 2020), e03466.

\bibitem{NW}
S.~Wright J.~Nocedal.
\newblock {\em Numerical Optimization}.
\newblock Springer New York  (1999).

\bibitem{jiao2024minimax}
Y.~Jiao and G.~Li.
\newblock Minimax-optimal multi-agent robust reinforcement learning.
\newblock {\em arXiv preprint arXiv:2412.19873\/} (2024).

\bibitem{Jin2021}
C.~Jin, P.~Netrapalli, R.~Ge, S.~M. Kakade, and M.~I. Jordan.
\newblock On nonconvex optimization for machine learning: Gradients,
  stochasticity, and saddle points.
\newblock {\em Journal of the ACM\/} {\bf 68} (February 2021), 1--29.

\bibitem{KYF}
C.~Kanzow, N.~Yamashita, and M.~Fukushima.
\newblock Levenberg{\textendash}{M}arquardt methods with strong local
  convergence properties for solving nonlinear equations with convex
  constraints.
\newblock {\em J. Comput. Appl. Math.\/} {\bf 172} (December 2004), 375--397.

\bibitem{kimiaei2017new}
M.~Kimiaei.
\newblock A new class of nonmonotone adaptive trust-region methods for
  nonlinear equations with box constraints.
\newblock {\em Calcolo\/} {\bf 54} (2017), 769--812.

\bibitem{VRDFON}
M.~Kimiaei.
\newblock An improved randomized algorithm with noise level tuning for
  large-scale noisy unconstrained {DFO} problems.
\newblock {\em Numerical Algorithms\/} (January 2025).

\bibitem{LMLS}
M.~Kimiaei and A.~Neumaier.
\newblock A new limited memory method for unconstrained nonlinear least
  squares.
\newblock {\em Soft Computing\/} {\bf 26} (December 2021), 465–490.

\bibitem{VRBBO}
M.~Kimiaei and A.~Neumaier.
\newblock Efficient unconstrained black box optimization.
\newblock {\em {M}ath. {P}rogram. {C}omput.\/} (2022), 365--414.

\bibitem{MADFO}
M.~Kimiaei and A.~Neumaier.
\newblock Effective matrix adaptation strategy for noisy derivative-free
  optimization.
\newblock {\em Mathematical Programming Computation\/} {\bf 16} (July 2024),
  459--501.

\bibitem{MATRS}
M.~Kimiaei and A.~Neumaier.
\newblock {MATRS}: heuristic methods for noisy derivative-free
  bound-constrained mixed-integer optimization.
\newblock {\em Mathematical Programming Computation\/} {\bf 17} (May 2025),
  505–546.

\bibitem{SSDFO}
M.~Kimiaei, A.~Neumaier, and P.~Faramarzi.
\newblock New subspace method for unconstrained derivative-free optimization.
\newblock {\em ACM Transactions on Mathematical Software\/} {\bf 49} (December
  2023), 1--28.

\bibitem{suppMat}
M.~Kimiaei, S.~Shabani, and M.~Breu{\ss}.
\newblock Supplementary {Material} and {MATLAB} code: Subspace methods for
  min--max problems.
\newblock \url{https://github.com/GS1400/PLS-PF-S}  (2026).

\bibitem{korpelevich1976extragradient}
G.~M. Korpelevich.
\newblock The extragradient method for finding saddle points and other
  problems.
\newblock {\em Matecon\/} {\bf 12} (1976), 747--756.

\bibitem{LF}
D.~Li and M.~Fukushima.
\newblock A globally and superlinearly convergent {G}auss--{N}ewton-based
  {BFGS} method for symmetric nonlinear equations.
\newblock {\em SIAM J. Math. Anal.\/} {\bf 37} (January 1999), 152--172.

\bibitem{liu1}
Y.~Liu and C.~Storey.
\newblock Efficient generalized conjugate gradient algorithms, part 1: theory.
\newblock {\em J Optim Theory Appl\/} {\bf 69} (1991), 129--137.

\bibitem{SDBOX}
S.~Lucidi and M.~Sciandrone.
\newblock A derivative-free algorithm for bound constrained optimization.
\newblock {\em Computational Optimization and Applications\/} {\bf 21} (2002),
  119--142.

\bibitem{madr}
A.~Madry, A.~Makelov, L.~Schmidt, D.~Tsipras, and A.~Vladu.
\newblock Towards deep learning models resistant to adversarial attacks.
\newblock  (2018).

\bibitem{Malitsky2019}
Y.~Malitsky.
\newblock Golden ratio algorithms for variational inequalities.
\newblock {\em Mathematical Programming\/} {\bf 184} (2020), 383--410.

\bibitem{NA}
A.~Neumaier and B.~Azmi.
\newblock Line search and convergence in bound-constrained optimization.
\newblock {\em Optimization Online\/} (2019).

\bibitem{CLS}
Arnold Neumaier and Morteza Kimiaei.
\newblock An improvement of the goldstein line search.
\newblock {\em Optimization Letters\/} {\bf 18} (April 2024), 1313--1333.

\bibitem{nocedal2006numerical}
J.~Nocedal and S.~J. Wright.
\newblock {\em Numerical optimization}.
\newblock Springer  (2006).

\bibitem{omid}
S.~Omidshafiei, J.~Pazis, C.~Amato, J.~P. How, and J.~Vian.
\newblock Deep decentralized multi-task multi-agent reinforcement learning
  under partial observability.
\newblock In {\em International conference on machine learning}, pp.
  2681--2690. PMLR  (2017).

\bibitem{Ou2022}
Y.~Ou and L.~Li.
\newblock A unified convergence analysis of the derivative-free
  projection-based method for constrained nonlinear monotone equations.
\newblock {\em Numerical Algorithms\/} {\bf 93} (December 2022), 1639--1660.

\bibitem{pol}
E.~Polak and G.~Ribiere.
\newblock Note sur la convergence de m{\'e}thodes de directions conjugu{\'e}es.
\newblock {\em ESAIM: Mathematical Modelling and Numerical
  Analysis-Mod{\'e}lisation Math{\'e}matique et Analyse Num{\'e}rique\/} {\bf
  3} (1969), 35--43.

\bibitem{popov1980modification}
L.~D. Popov.
\newblock A modification of the {A}rrow-{H}urwicz method for search of saddle
  points.
\newblock {\em Mathematical notes of the Academy of Sciences of the USSR\/}
  {\bf 28} (1980), 845--848.

\bibitem{UOBYQA}
M.~J.~D. Powell.
\newblock {UOBYQA}: Unconstrained optimization by quadratic approximation.
\newblock {\em Mathematical Programming\/} {\bf 92} (2002), 555--582.

\bibitem{BOBYQA}
M.~J.~D. Powell.
\newblock The {BOBYQA} algorithm for bound constrained optimization without
  derivatives.
\newblock Technical report, Department of Applied Mathematics and Theoretical
  Physics, Cambridge  (2009).

\bibitem{robey2021adversarial}
A.~Robey, L.~Chamon, G.~J. Pappas, H.~Hassani, and A.~Ribeiro.
\newblock Adversarial robustness with semi-infinite constrained learning.
\newblock {\em Advances in Neural Information Processing Systems\/} {\bf 34}
  (2021), 6198--6215.

\bibitem{ryu2016primer}
E.~K. Ryu and S.~Boyd.
\newblock Primer on monotone operator methods.
\newblock {\em Appl. comput. math\/} {\bf 15} (2016), 3--43.

\bibitem{SS}
M.~V. Solodov and B.~F. Svaiter.
\newblock A globally convergent inexact {N}ewton method for systems of monotone
  equations.
\newblock In {\em Reformulation: Nonsmooth, Piecewise Smooth, Semismooth and
  Smoothing Methods}, pp. 355--369. Springer {US}  (1998).

\bibitem{sorber2012unconstrained}
L.~Sorber, M.~V. Barel, and L.~D. Lathauwer.
\newblock Unconstrained optimization of real functions in complex variables.
\newblock {\em SIAM Journal on Optimization\/} {\bf 22} (2012), 879--898.

\bibitem{WW}
A.~Wood and B.~Wollenberg.
\newblock {\em Power Generation Operation and Control}.
\newblock John Wiley and Sons  (1996).

\bibitem{XZ}
Y.~Xiao and H.~Zhu.
\newblock A conjugate gradient method to solve convex constrained monotone
  equations with applications in compressive sensing.
\newblock {\em J. Math. Anal. Appl\/} {\bf 405} (September 2013), 310--319.

\bibitem{yoon2021accelerated}
T.~Yoon and E.~K. Ryu.
\newblock Accelerated algorithms for smooth convex-concave minimax problems
  with o (1/k\^{} 2) rate on squared gradient norm.
\newblock In {\em International Conference on Machine Learning}, pp.
  12098--12109. PMLR  (2021).

\bibitem{ZZL1}
L.~Zhang, W.~Zhou, and D.~H. Li.
\newblock A descent modified
  {P}olak{\textendash}{R}ibi{\`{e}}re{\textendash}{P}olyak conjugate gradient
  method and its global convergence.
\newblock {\em IMA J. Numer. Anal.\/} {\bf 26} (October 2006), 629--640.

\bibitem{ZZL2}
L.~Zhang, W.~Zhou, and D.~H. Li.
\newblock Some descent three-term conjugate gradient methods and their global
  convergence.
\newblock {\em Optim Methods Softw\/} {\bf 22} (August 2007), 697--711.

\end{thebibliography}
\end {document}